\documentclass[11pt]{article}

\usepackage[a4paper,margin=1in]{geometry}
\usepackage{amsmath,amssymb,amsthm}

\newtheorem{theorem}{Theorem}[section]
\newtheorem{maintheorem}{Theorem}

\newtheorem{proposition}[theorem]{Proposition}
\newtheorem{lemma}[theorem]{Lemma}
\newtheorem{example}[theorem]{Example}
\newtheorem{definition}[theorem]{Definition}
\newtheorem{problem}[theorem]{Problem}
\newtheorem{corollary}[theorem]{Corollary}
\theoremstyle{definition}
\newtheorem{remark}[theorem]{Remark}

\newcommand{\PP}{\mathbb P}
\newcommand{\CC}{\mathbb C}
\newcommand{\OO}{\mathcal O}
\newcommand{\F}{\mathcal F}
\newcommand{\Sing}{\operatorname{Sing}}
\newcommand{\ord}{\operatorname{ord}}
\newcommand{\wt}{\operatorname{wt}}
\newcommand{\Res}{\operatorname{Res}}
\newcommand{\dd}{\,\mathrm d}

\title{Cuspidal Cubic Projective Foliations: Global Rigidity, Deformations, and Pull-Backs}
\author{Bruno Sc\'ardua\\[0.6ex]
\small Institute of Mathematics, Federal University of Rio de Janeiro\\
\small 21945-970 Rio de Janeiro, Brazil\\
\small \texttt{bruno.scardua@gmail.com}}
\date{}

\begin{document}

\maketitle

\begin{abstract}
We study rigidity phenomena for holomorphic foliations associated with
irreducible cuspidal cubics.  If $C\subset\PP^2$ is such a cubic, we
denote by $p$ its cusp and by $q$ its unique smooth inflection point
(or flex), where the tangent line has contact of order three with the
cubic.  Our first main result is an intrinsic global classification through
degree seven: if a foliation $\F$ of degree at most seven leaves $C$
invariant, has singular set $\{p,q\}$, the germ $\F_p$ admits a
nonconstant holomorphic first integral with $C_p$ as its unique
separatrix, and the reduction of $\F_q$ contains no saddle-nodes, then
$\F$ has degree two and is projectively equivalent to the standard
cuspidal foliation
\[
 \dd(x^2+y^3)=0.
\]
In particular it admits a rational first integral.  The proof combines
polynomial normal forms adapted to the invariant cusp, a global contact
constraint at the smooth flex, and weighted local equations arising from
the holomorphic first integral. These ingredients eliminate all cases up
to degree seven; only the extremal degree-seven case requires a final
exact algebraic computation over $\mathbb Q$.

Our second main result concerns degree-two deformations: a sufficiently
small deformation of the standard model which preserves the fixed
cuspidal cubic and has exactly two singular points is trivial.  We also
describe the two basic ways in which cusp-preserving deformations can
escape this rigidity.  Finally, for $n\geq3$ we introduce the cuspidal
linear pull-back locus $\mathcal C_n^{\mathrm{cusp}}$, prove that it is
a proper closed irreducible algebraic subset of the classical linear pull-back
component of $\operatorname{Fol}_2(\PP^n)$, and prove rigidity of the
base foliation for deformations which remain in that pull-back locus.
This leads to an intrinsic pull-back recognition problem and, more
broadly, to the study of geometrically distinguished irreducible
algebraic subloci inside irreducible components of spaces of projective
foliations.
\end{abstract}

\medskip
\noindent\textbf{2020 Mathematics Subject Classification.}
Primary 37F75; Secondary 32M25, 32S65, 14N05.

\smallskip
\noindent\textbf{Keywords.}
Holomorphic foliations; projective foliations; cuspidal cubic; rigidity;
deformations; linear pull-backs; singularities.

\tableofcontents

\section{Introduction}

Let $C\subset\PP^2_{\CC}$ be an irreducible cuspidal cubic.  Besides
its unique cusp, $C$ has a unique smooth inflection point, or
\emph{smooth flex}: this is the unique point $q\in C_{\mathrm{reg}}$
for which the tangent line $T_qC$ has intersection multiplicity
\[
 I_q(C,T_qC)=3.
\]
It is classical that every irreducible cuspidal cubic is projectively
equivalent to
\[
 C_0=\{X^2Z+Y^3=0\};
\]
see, for instance, \cite[Chapter~3]{Dolgachev2012}.  In this standard
model the cusp and the smooth flex are respectively
\[
 p_0=[0:0:1],
 \qquad
 q_0=[1:0:0],
\]
and the tangent line at $q_0$ is $(Z=0)$.  Indeed, the Hessian
determinant of $X^2Z+Y^3$ is a nonzero constant multiple of $X^2Y$, so
$q_0$ is the only smooth point of $C_0$ on its Hessian.  Thus the
marked pair consisting of the cusp and the smooth flex is intrinsic
and is preserved by projective equivalence.

In the affine chart $Z=1$ we write
\[
 f=x^2+y^3.
\]
The standard cuspidal foliation is
\[
 \F_{\mathrm{cusp}}:\quad \dd f=0.
\]
Its projective degree is two and it admits the rational first integral
\[
 \mathcal R([X:Y:Z])
 =
 \frac{X^2Z+Y^3}{Z^3}.
\]

The paper develops three related rigidity questions around this model.
First, we ask to what extent an invariant cuspidal cubic together with
the concentration of the singular set at its cusp and smooth flex
determines a foliation of bounded degree.  Second, once the standard
degree-two model is reached, we ask which cusp-preserving deformations
can keep the singularities concentrated in only two points.  Third, in
higher dimension we study the algebraic locus obtained by linear
pull-back of the planar cuspidal foliation and its rigidity inside the
classical linear pull-back component of the space of foliations.

For later reference, let $C\subset\PP^2$ be an irreducible cuspidal
cubic, with cusp $p$ and unique smooth flex $q$.  We shall call a pair
$(\F,C)$ a \emph{cuspidal pair} if $C$ is invariant by $\F$ and
\[
 \Sing(\F)=\{p,q\}.
\]
Thus the terminology records simultaneously the invariant cubic and the
concentration of the singular set at its two intrinsic distinguished
points.

Our first main result, Theorem~\ref{thm:main}, answers the global question
through degree seven in an intrinsic form; its proof is completed in
Section~\ref{sec:proof-main}.

\begin{maintheorem}[Global rigidity through degree seven]
\label{thm:main}
Let $C\subset\PP^2$ be an irreducible cuspidal cubic, let $p$ be its
cusp, and let $q$ be its unique smooth flex.  Let $\F$ be a holomorphic
foliation of degree $d\leq7$ on $\PP^2$.  Assume that:
\begin{enumerate}\renewcommand{\labelenumi}{\textup{(H\arabic{enumi})}}
\item $C$ is invariant by $\F$;
\item
\[
 \Sing(\F)=\{p,q\};
\]
\item the germ $\F_p$ admits a nonconstant holomorphic first integral
and $C_p$ is its unique separatrix;
\item the reduction of $\F_q$ contains no saddle-nodes.
\end{enumerate}
Under these assumptions, $d=2$ and $\F$ is projectively equivalent to
$\F_{\mathrm{cusp}}$.  More precisely, if $F_C=0$ is a homogeneous
cubic equation of $C$ and $\ell_q=0$ is the tangent line to $C$ at its
smooth flex $q$, then, up to multiplication by a nonzero constant,
\[
 \frac{F_C}{\ell_q^3}
\]
is a rational first integral of $\F$.
\end{maintheorem}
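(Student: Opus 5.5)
We first normalize coordinates: by the classical projective normal form we may take $C=C_0=\{X^2Z+Y^3=0\}$, $p=p_0$, $q=q_0$, and $\ell_q=Z$. In the affine chart $Z=1$ the foliation is given by a polynomial $1$-form $\omega=A\,dx+B\,dy$ of degree $d$ (with the usual restriction on the top-degree homogeneous part). Invariance of $C$ gives $\omega\wedge df=f\,\eta$ for some polynomial $2$-form $\eta$, i.e. $Af_y-Bf_x=fK$ with a cofactor $K$ of degree $\le d-1$. Writing $\omega=g\,df+f\,\theta$ (possible because $f$ has an isolated singularity and $C$ is invariant, via the de Rham--Saito division lemma for the quasi-homogeneous $f$) gives a polynomial normal form adapted to the cusp. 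We will parametrize all such forms of degree $\le7$.

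Next we impose the local conditions at the two points.

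\emph{At $p$.} (H3) implies that $\F_p$ has a holomorphic first integral $h$ whose zero set is exactly $C_p$. By Mattei--Moussu and the irreducibility of the cusp, $h=u\cdot f^k$ up to composition, so $\F_p$ is given by $d(f)=0$ up to a holomorphic change of coordinates preserving $C_p$. In weighted terms (weights $3,2$ for $x,y$), the weighted-initial part of $\omega$ must be a nonzero multiple of $df$ times a unit, and the higher weighted pieces are constrained by $\omega\wedge d h=0$. This kills the cofactor terms of low weight.

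\emph{At $q$ and at infinity.} The requirement $\Sing(\F)=\{p,q\}$ means that there are no affine singularities other than $p$ and no singularities on $Z=0$ other than $q$. The line at infinity meets $C$ only at $q$, with contact $3$. If $Z=0$ is not invariant, the tangency count $d$ along it must be concentrated at $q$. If it is invariant, then $\F$ restricted to $\PP^1$ has $d+2$ singularities counted with multiplicity, all at $q$. Combining this with the Camacho--Sad and Baum--Bott indices on $C$, the formula $\sum_{s}CS(\F,C,s)=C^2=9$ splits into the contributions at $p$ (computable from the local model $df$) and at $q$. Together with $\sum BB=(d+2)^2$ and the absence of saddle-nodes in the reduction at $q$ (H4), this should force the Milnor number at $q$ to equal $(d+2)^2-(d+1)-\mu_p$ and fix the local type at $q$ to be a resonant node-like singularity compatible with a first integral $f/Z^3$. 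This is the global contact constraint.

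Feeding these constraints into the normal form yields, for each $d\le7$, a finite polynomial system in the coefficients. We expect that for $d\le6$ a weight/degree argument shows that either $g$ is constant and $\theta=0$, giving $\omega=c\,df$ with $F_C/Z^3$ as first integral, or a spurious singular point appears in the affine chart or on $Z=0$. The main obstacle is the extremal case $d=7$, where the index bookkeeping leaves free parameters. There we would set up the system explicitly (the conditions being that the resultants of $A$ and $B$ have no roots except at $p$, plus the weighted conditions at $p$ and $q$) and solve it over $\mathbb Q$ by Gröbner bases, checking that the only solutions are degenerate (a common factor, so the true degree drops). The first thing to try is bounding the degree of $g$ through the weighted filtration at $p$.
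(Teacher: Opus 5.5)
The overall shape is right: normalize to $C_0$, use a cusp-adapted normal form, impose weighted constraints at $p$, use a global count at $q$, and finish degree seven with exact algebra. However, the steps that actually carry the proof are misstated or missing.

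\textbf{The normal form.} Invariance alone does not give $\omega=g\dd f+f\theta$. The form $\eta=3x\dd y-2y\dd x$ leaves $C$ invariant, but its linear part is not of the form $2g(0)x\dd x$. Invariance only yields $\omega=A\dd f+B\eta$; the de Rham--Saito lemma concerns $\omega\wedge\dd f=0$, not $\omega\wedge\dd f\in f\Omega^2$. Your form does follow from (H3), since locally $\omega$ is a unit multiple of $\dd(uf)$, but it is far too weak. The paper needs two further global steps.
\begin{enumerate}
\item On the normalization $\gamma(t)=(t^3,-t^2)$, the coefficient $tA(\gamma(t))+B(\gamma(t))$ is a polynomial with a simple zero at $0$ (from $\mu_p=2$) and no other zero (from (H2)). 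Hence it equals $t$, so $\omega=\dd f+f\alpha$ with $\alpha=2K\dd x+3L\dd y$.
\item Hypothesis (H4) then enters through the Bott connection along $C$. The form $\gamma^*\alpha$ is a polynomial $1$-form, and it can have at most a logarithmic pole at $t=\infty$ because the reduction at $q$ has no saddle-nodes. A nonzero polynomial form has a pole of order at least two there, so $\gamma^*\alpha=0$ and $\omega=(1+fU)\dd f+fV\eta$.
\end{enumerate}
This is what forces $d=m+2$ with $m\ge3$, which removes every nonstandard case with $d\le4$ at once. It also produces the explicit local data $T,G$ at $q$ on which all later computations are done. In your plan (H4) is only invoked to ``fix the local type at $q$'', which is not an argument.

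\textbf{The count at $q$.} The Milnor number at $q$ comes directly from $\sum_r\mu_r=d^2+d+1$, giving $\mu_q=d^2+d-1$. Camacho--Sad and Baum--Bott indices are not Milnor numbers. Your value $(d+2)^2-(d+1)-\mu_p=d^2+3d+1$ already fails for the model, where $\mu_q=5$, not $11$. Two side claims are also off: an invariant line carries $d+1$ singularities counted with multiplicity, not $d+2$; and tangencies of a non-invariant line are in general regular points, so (H2) does not concentrate them at $q$.

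\textbf{How $\mu_q$ is exploited.} The decisive use in the paper is the contact formula. At $q$ the saturated form is $3wG\dd u-(2T+uG)\dd w$, so $\mu_q=3m+3+I_0(G,T)$, and one needs $I_0(G,T)=m^2+2m+2$.
\begin{enumerate}
\item Degree six and the sectors $k=0,1$ of degree seven are excluded by computing $I_0(G,T)$ along explicit branches. This uses a few weighted obstructions, and in the sector $k=1$ it uses (H4) to rule out the cancellations that would create saddle-nodes.
\item Degree five combines cokernel obstructions of the weighted recurrence with a resultant argument producing an extra affine singularity.
\item The extremal degree-seven sector combines weighted obstructions and resultant coefficients with further saddle-node exclusions at successive blow-ups.
\end{enumerate}
Your ``finite polynomial system solved by Gr\"obner bases'' does not explain how these hypotheses become finitely many equations. (H3) is an infinite sequence of weighted obstructions, and (H4) is a condition on the reduction tree, which the paper converts case by case into specific equalities and inequalities. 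For $d\le6$ you state only an expectation. As written, the proposal is a program rather than a proof.
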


For the proof we use the projective classification just recalled.
Choose $A\in\operatorname{PGL}(3,\CC)$ with $A(C)=C_0$.  Since the cusp
and the unique smooth flex are intrinsic, necessarily
\[
 A(p)=p_0,
 \qquad
 A(q)=q_0.
\]
Replacing $\F$ by $A_*\F$, we may therefore work throughout the global
part of the paper with the normalized configuration
\[
 C=C_0,
 \qquad
 p=p_0,
 \qquad
 q=q_0.
\]
All hypotheses of the theorem are invariant under this projective
change of coordinates.

The proof of Theorem~\ref{thm:main} is organized in three stages. We
first obtain a polynomial normal form adapted to the invariant cusp.
We then translate the global two-singularity condition into an exact
intersection multiplicity at $q$. Finally, the local first integral at
$p$ gives a weighted cohomological equation. The combination of these
constraints excludes all nonstandard foliations of degrees at most
seven.

Our second main result, Theorem~\ref{thm:two-singularity-deformation-final},
established in Section~\ref{sec:planar-deformation}, shows a stronger
rigidity phenomenon in degree two.  For a sufficiently
small deformation of $\F_{\mathrm{cusp}}$, the local first-integral
condition at the cusp and the reduction hypothesis at the flex are no
longer needed: preservation of the fixed cuspidal cubic together with
the requirement that the singular set consist of exactly two points
already forces
\[
 \F_t=\F_{\mathrm{cusp}}.
\]
Two explicit families show the sharp geometric mechanism behind this
statement: a nontrivial cusp-preserving deformation must lose the
concentration of singularities either near the affine cusp or at
infinity.

The third part of the paper passes to $\PP^n$, $n\geq3$.  We define the
cuspidal linear pull-back locus $\mathcal C_n^{\mathrm{cusp}}$ and prove
in Proposition~\ref{prop:cuspidal-locus-irreducible-final} that it is a
proper closed irreducible algebraic subset of
$\operatorname{Fol}_2(\PP^n)$ contained in the classical linear
pull-back irreducible component.  Theorem~\ref{thm:rigidity-inside-pullback-final},
proved in Section~\ref{sec:pullback-locus}, then shows that, for deformations
already known to remain linear
pull-backs, an invariant holomorphic family of cuspidal cubics together
with the two-singularity condition freezes the base foliation up to
projective equivalence.  This higher-dimensional result motivates an
intrinsic pull-back recognition problem: can the invariant cuspidal
cone and the two distinguished transverse singularity types themselves
force the linear pull-back structure?  It also suggests a broader
algebro-geometric program: after the irreducible components of a space
of foliations are known, one may study the natural irreducible
algebraic subloci inside those components which are selected by
geometric or dynamical conditions.

\noindent{\bf Acknowledgement}. During the preparation of this work, the  author was partially funded by 
Fundação Getúlio Vargas - Rio de Janeiro.  
\section{Polynomial normal forms}
\label{sec:normal}

Throughout the next sections, $C\subset\PP^2$ denotes the normalized
cuspidal cubic
\[
 C=\{X^2Z+Y^3=0\},
\]
with cusp
\[
 p=[0:0:1]
\]
and distinguished smooth flex
\[
 q=[1:0:0].
\]
We work in the affine chart $Z=1$, where
\[
 f=x^2+y^3,
 \qquad
 \eta=3x\,\dd y-2y\,\dd x.
\]
We use the normalization
\[
 \gamma(t)=(t^3,-t^2)
\]
of the affine cusp. Along $C$,
\begin{equation}
 \dd f=t\eta.
 \label{eq:dfteta}
\end{equation}

\begin{lemma}[Logarithmic decomposition]
\label{lem:decomp}
Let
\[
 \omega=a\,\dd x+b\,\dd y
\]
be a polynomial one-form leaving $(f=0)$ invariant. Then there exist
unique polynomials $A,B\in\CC[x,y]$ such that
\[
 \omega=A\,\dd f+B\eta.
\]
\end{lemma}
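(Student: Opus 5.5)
The plan is to reduce the statement to linear algebra on the pair $(f_x,f_y)=(2x,3y^2)$. Writing $\omega=A\,\dd f+B\eta$ in coordinates gives the system
\[
 a=2xA-2yB,\qquad b=3y^2A+3xB,
\]
whose determinant is
\[
 \det\begin{pmatrix}2x&-2y\\3y^2&3x\end{pmatrix}=6x^2+6y^3=6f.
\]
By Cramer's rule,
\[
 A=\frac{3xa+2yb}{6f},\qquad B=\frac{2xb-3y^2a}{6f}.
\]
Since the determinant is $6f\neq0$ in $\CC(x,y)$, the pair $(A,B)$ is unique among rational functions, hence among polynomials. For existence, it remains to show that $f$ divides both numerators
\[
 N_1=3xa+2yb,\qquad N_2=2xb-3y^2a=\omega\wedge\dd f/(\dd x\wedge\dd y).
\]

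For $N_2$, I will use the invariance of $(f=0)$, which means exactly that $\omega\wedge\dd f=f\,\theta$ for some polynomial $2$-form $\theta$. Here one has to be careful that invariance is really the statement that $f$ divides $\omega\wedge\dd f$. Because $f$ is irreducible, this is equivalent to $\omega$ vanishing on tangent vectors of the smooth part of $C$, i.e. $\gamma^*\omega=0$. Via \eqref{eq:dfteta} it is also equivalent to $N_2\circ\gamma\equiv0$, and irreducibility then gives $f\mid N_2$.

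For $N_1$, I will use the identity
\[
 N_1=\eta\wedge\omega/(\dd x\wedge\dd y),
\]
whose sign I will check directly. Along $\gamma$, the vector $\gamma'(t)=(3t^2,-2t)$ spans the tangent line, and I will evaluate $\omega$ on a transverse direction. The cleaner route is to note that the Euler-type vector field $v=3x\partial_x+2y\partial_y$ satisfies $\dd f(v)=6f$ and $\eta(v)=0$. Thus $N_1=\omega(v)$ up to the normalization constant, and $v$ is tangent to $C$ since $v(f)=6f$. Hence $\omega(v)\circ\gamma=0$ by invariance, and again $f\mid N_1$ by irreducibility of $f$ (Nullstellensatz). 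Then $A=N_1/6f$ and $B=N_2/6f$ are polynomials. Since $\omega=A\,\dd f+B\eta$ holds as rational forms by construction, it holds as polynomial forms.

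I expect the main obstacle to be the precise meaning of ``leaving $(f=0)$ invariant'' near the cusp. The vanishing $\gamma^*\omega=0$ only controls $\omega$ along tangent directions. The argument above avoids this difficulty because both numerators are contractions of $\omega$ with vector fields tangent to $C$: $v$ for $N_1$, and the Hamiltonian field $f_y\partial_x-f_x\partial_y$ for $N_2$. Each therefore vanishes identically on $C$ once $\omega$ restricted to the regular part of $C$ annihilates its tangent line.
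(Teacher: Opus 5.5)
Your proof is correct, and its skeleton matches the paper's. You use the same linear system with determinant $6f$; the paper proves uniqueness by wedging with $\eta$ and using $\dd f\wedge\eta=6f\,\dd x\wedge\dd y$, which is your Cramer step. You also use the same formula $B=(2xb-3y^2a)/(6f)$, which is polynomial directly by invariance.

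Where you differ is the divisibility that produces $A$. The paper never examines $3xa+2yb$. Once $B$ is known to be polynomial, it uses the identity $3y^2(a+2yB)=2x(b-3xB)$ and $\gcd(x,y^2)=1$ to set $A=(a+2yB)/(2x)$, which equals your $(3xa+2yb)/(6f)$. You instead write $3xa+2yb=\omega(E_0)$, with your Euler field $v=E_0=3x\partial_x+2y\partial_y$ tangent to $C$. So this function vanishes on $C$ and is divisible by the prime $f$.

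Each route buys something different.
\begin{itemize}
\item The paper's step needs only unique factorization in $\CC[x,y]$ once $f\mid 3y^2a-2xb$ is granted; it uses nothing about $f$ being reduced or irreducible.
\item Your step needs the Nullstellensatz for the irreducible curve $C$. In exchange it is symmetric and conceptual: $6fA=\omega(E_0)$ and $6fB=-\omega(X_0)$, with $X_0=3y^2\partial_x-2x\partial_y$, are contractions with the two logarithmic vector fields dual, up to the factor $6f$, to the coframe $(\dd f,\eta)$. The argument therefore transfers verbatim to any reduced curve with an explicit basis of logarithmic vector fields.
\end{itemize}
A one-line algebraic version of your step is the identity $y^2(3xa+2yb)=x(3y^2a-2xb)+2fb$. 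It gives $f\mid y^2(3xa+2yb)$, hence $f\mid 3xa+2yb$.

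There are minor, harmless slips. In fact $\omega\wedge\dd f=-N_2\,\dd x\wedge\dd y$ and $\eta\wedge\omega=-N_1\,\dd x\wedge\dd y$. Also $N_1=\omega(v)$ holds exactly, with no normalization constant.
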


\begin{proof}
Invariance is equivalent to
\[
 f\mid 3y^2a-2xb.
\]
Set
\[
 B=\frac{2xb-3y^2a}{6f}.
\]
Then $B$ is polynomial. Moreover
\[
 3y^2(a+2yB)=2x(b-3xB).
\]
Since $\gcd(x,y^2)=1$, the polynomial $a+2yB$ is divisible by $x$, and
\[
 A=\frac{a+2yB}{2x}
\]
is polynomial. Direct substitution gives the asserted decomposition.

To prove uniqueness, suppose that
\[
 A\,\dd f+B\eta=A'\,\dd f+B'\eta.
\]
Then
\[
 (A-A')\,\dd f+(B-B')\eta=0.
\]
Wedge this identity with $\eta$. Since
\[
 \dd f\wedge\eta
 =
 6f\,\dd x\wedge\dd y
\]
and $\CC[x,y]$ is an integral domain, we obtain
\[
 (A-A')f=0,
\]
hence $A=A'$. The preceding identity then reduces to
\[
 (B-B')\eta=0,
\]
and therefore $B=B'$. Thus the decomposition is unique.
\end{proof}

\begin{lemma}[The local first integral at the cusp]
\label{lem:cuspFI}
Under \textup{(H3)}, a reduced first integral at $p$
has the form
\[
 H=uf,
 \qquad
 f=x^2+y^3,
 \qquad
 u\in\OO_{\CC^2,0}^{*}.
\]
Furthermore
\[
 \mu_p(\F)=2.
\]
\end{lemma}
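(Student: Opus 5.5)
Let $H\in\OO_{\CC^2,0}$ be a nonconstant holomorphic first integral of $\F_p$ given by (H3). The plan has two stages: identify the zero set of $H$ with the local branch $C_p=\{f=0\}$, and then compute $\mu_p(\F)$ from the resulting local equation.

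\textbf{Step 1: the form of $H$.} We may subtract a constant and assume $H(0)=0$; after extracting a root we may also assume $H$ is not a power, which is what ``reduced'' means here. Every irreducible component of the curve $\{H=0\}$ through $0$ is invariant by $\F_p$, hence is a separatrix. By (H3), $C_p$ is the only separatrix, and $C_p=\{f=0\}$ with $f$ irreducible in $\OO_{\CC^2,0}$. Therefore $\{H=0\}=\{f=0\}$ as germs, and $H=u f^k$ with $u(0)\neq0$ and $k\ge1$.

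Next we show $k=1$. The function $f$ is constant on the leaf $C_p\setminus\{0\}$, and $H$ is a first integral. One could argue that $f$ and $H$ then define the same foliation, but that is circular. The cleaner route is the classical fact (Mattei--Moussu) that a reduced holomorphic first integral has connected generic fibres, and its zero fibre is the reduced separatrix set. If $k\ge2$, we write $u=v^k$, which is possible since $u$ is a unit, so $H=(vf)^k$. This contradicts reducedness. Hence $H=uf$, and we may rename $u\in\OO^*$.

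\textbf{Step 2: the Milnor number.} The foliation $\F_p$ is defined by $\dd H$ divided by its codimension-one zero locus. This locus is empty, because $\dd H=u\,\dd f+f\,\dd u$ has isolated zero at $0$, as we now check. The local index is $\mu_p(\F)=\mu(H)$, the Milnor number of $H$, and this is the equality $\mu(\F_p)=\dim\OO/(H_x,H_y)$. The Milnor number is an invariant of the ideal only up to contact equivalence, and $\mu(uf)=\mu(f)$ because the Milnor number depends only on the zero set $(H=0)$ for isolated singularities; equivalently, the Tjurina and Milnor numbers of the curve do not change under multiplication by units. For a cusp, $\mu=2\delta-r+1=2\cdot1-1+1=2$. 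Alternatively, one computes directly $\dim\OO/(2x,3y^2)=2$.

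We must also justify that the foliation's defining 1-form at $p$ is exactly $\dd H$ up to a unit. Since $\omega\wedge\dd H=0$ and $\omega$ has an isolated singularity, $\dd H=g\,\omega$ with $g$ holomorphic, by the de Rham--Saito division. Here $g$ is a unit, because $\dd H$ has an isolated zero. Hence $\mu_p(\F)=\mu(\dd H)=2$.

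\textbf{Main obstacle.} The delicate point is $k=1$ together with the precise meaning of ``reduced,'' that is, the claim that a primitive first integral is not a power. I would invoke Mattei--Moussu's primitivity to settle it.
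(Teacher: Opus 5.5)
Your proposal is correct and follows the same route as the paper: uniqueness of the separatrix forces $H=uf$, and $\mu_p(\F)$ is then the Milnor number of $uf$, which equals $\dim_{\CC}\CC\{x,y\}/(x,y^2)=2$. You are more careful than the paper, which silently uses both the de Rham division step (identifying the saturated form with $\dd H$ up to a unit) and the equality $\mu(uf)=\mu(f)$; the latter, and the isolated zero of $\dd H$ that your Step 2 asserts before checking it, follow in one line from the Euler relation $6f=3xf_x+2yf_y$, which gives $((uf)_x,(uf)_y)=(f_x,f_y)=(x,y^2)$.
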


\begin{proof}
The zero divisor of a reduced first integral is the union of the local
separatrices. Since $C_p$ is the unique separatrix and $f$ is reduced
and irreducible,
\[
 H=uf
\]
for a holomorphic unit $u$.

The foliation is therefore locally defined by a unit multiple of
$\dd(uf)$, and hence
\[
 \mu_p(\F)
 =
 \dim_{\CC}
 \frac{\CC\{x,y\}}{(f_x,f_y)}
 =
 \dim_{\CC}
 \frac{\CC\{x,y\}}{(x,y^2)}
 =
 2.
\]
\end{proof}

We also recall a standard consequence of the Mattei--Moussu theory: a germ of
holomorphic foliation in $(\CC^2,0)$ admitting a nonconstant holomorphic first
integral has no saddle-nodes in its reduction; see
\cite{MatteiMoussu1980}.  For the analytic first-integral theorem itself, see
also \cite{MatteiMoussu}.

\begin{proposition}[Affine polynomial normal form]
\label{prop:KL}
Let
\[
 \omega=a\,\dd x+b\,\dd y
\]
be a polynomial $1$-form defining, in the affine chart $Z=1$, a foliation
$\F$ satisfying the standing hypotheses of Theorem~\ref{thm:main}. Then,
after multiplying $\omega$ by a nonzero constant, there exist polynomials
$K,L$ such that
\begin{equation}
 \omega
 =
 \dd f+f(2K\,\dd x+3L\,\dd y).
 \label{eq:KL}
\end{equation}
Equivalently,
\[
 \omega=2P\,\dd x+3Q\,\dd y,
\]
where
\[
 P=x+fK,
 \qquad
 Q=y^2+fL.
\]
Moreover
\begin{equation}
 (P,Q)=(x,y^2)
 \qquad\text{in }\CC[x,y].
 \label{eq:singideal}
\end{equation}
\end{proposition}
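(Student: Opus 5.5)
Start from Lemma~\ref{lem:decomp}: since $(f=0)$ is invariant, write $\omega=A\,\dd f+B\eta$ with $A,B\in\CC[x,y]$ unique. The plan is to show that $A(0,0)\neq0$ and $B\in(f)$; then, after normalizing $A(0,0)=1$, the form has the shape \eqref{eq:KL}.

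\textbf{Step 1: the local first integral at the cusp.} By Lemma~\ref{lem:cuspFI}, near $p$ the foliation is defined by $\dd(uf)$ with $u$ a unit. Hence
\[
\omega=g\,\dd(uf)=gu\,\dd f+gf\,\dd u,
\]
with $g$ holomorphic near $0$. Since $\Sing(\F)$ is finite, $g$ is a unit.

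Write $\dd u=\alpha\,\dd x+\beta\,\dd y$. The germ of $f\,\dd u$ decomposes as $\alpha'\,\dd f+\beta'\eta$ by the local analogue of Lemma~\ref{lem:decomp}. Solving $f\alpha=2x\alpha'-2y\beta'$ and $f\beta=3y^2\alpha'+3x\beta'$ gives
\[
\beta'=\frac{2x\beta-3y^2\alpha}{6}\cdot\frac{6f}{6f}.
\]
Concretely, $\beta'=(2x\beta-3y^2\alpha)/6$ and $\alpha'=(3x\alpha+2y\beta)/6$. So $f\,\dd u$ has $\dd f$-coefficient $f\alpha'$ and $\eta$-coefficient $f\beta'$, both divisible by $f$.

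The decomposition is unique in the local ring as well, by the same wedge argument with $\dd f\wedge\eta=6f\,\dd x\wedge\dd y$. Therefore the germ of $B$ equals $gf\beta'$, which lies in $f\OO_{\CC^2,0}$. The germ of $A$ equals $gu+gf\alpha'$, so $A(0)=g(0)u(0)\neq0$.

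\textbf{Step 2: globalization.} Since $f$ is irreducible and its zero set passes through $0$, local divisibility $f\mid B$ in $\OO_{\CC^2,0}$ implies global divisibility $f\mid B$ in $\CC[x,y]$. To see this, write $B=fh$ with $h$ meromorphic, or argue via $B|_{C}$. Because $C$ is irreducible, $B$ vanishing on the germ $C_p$ forces $B$ to vanish on all of $C$, hence $f\mid B$ with $B=fB_1$.

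Scale $\omega$ so that $A(0,0)=1$, and write $A=1+A_0$ with $A_0(0)=0$. The key point is that $A_0\in(f)$, not merely $A_0(0)=0$. I will get this from the same local comparison. The local $\dd f$-coefficient is $g(u+f\alpha')$, and $g$ is a unit, so it is not necessarily $1$ modulo $f$.

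Here one must instead re-choose the decomposition. Note that $f\,\dd f$ and $f\eta$ both lie in $f\cdot\{2K\,\dd x+3L\,\dd y\}$, since $\dd f=2x\,\dd x+3y^2\dd y$ and $\eta=-2y\,\dd x+3x\,\dd y$. Writing $A=c+A_1$, the term $A_1\,\dd f$ is the obstacle: only its part divisible by $f$ can be absorbed.

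To absorb the rest, use integrability on $C$. On the normalization, the conditions $\omega\wedge\dd\omega=0$ and $B=fB_1$ give
\[
\omega\wedge\dd\omega=A\,\dd f\wedge \dd B+\dots
\]
Restricting to $C$ shows that $\dd A\wedge\dd f$ vanishes along $C$, so $A|_C$ is constant. Hence $A-c\in(f)$. This is the step I expect to be the main obstacle: carefully computing $\omega\wedge\dd\omega$ modulo $f$, and using that $\dd f|_C=t\eta$ by \eqref{eq:dfteta}, in order to conclude that $A|_C$ is constant on the curve parametrized by $\gamma$.

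Granting this, $\omega=c\,\dd f+f(A_2\,\dd f+B_1\eta)$, which is of the form \eqref{eq:KL} with
\[
K=A_2x-B_1y,\qquad L=A_2y^2+B_1x.
\]

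\textbf{Step 3: the singular ideal.} We have $P=x+fK$ and $Q=y^2+fL$. Since $f=x\cdot x+y\cdot y^2\in(x,y^2)$, it follows that $(P,Q)\subseteq(x,y^2)$. For the reverse inclusion, the common zeros of $P,Q$ in $\CC^2$ are the affine singular points, which by (H2) reduce to $p$ alone (note $q$ lies at infinity). So $\CC[x,y]/(P,Q)$ is supported at the origin, with length $\mu_p(\F)=2=\dim\CC[x,y]/(x,y^2)$ by Lemma~\ref{lem:cuspFI}. The inclusion of ideals together with equal finite colength forces equality \eqref{eq:singideal}.
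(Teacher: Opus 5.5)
\textbf{There is a genuine gap in Steps 1--2.}

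\emph{The slip in Step 1.} By your own formulas, $f\,\dd u=\alpha'\,\dd f+\beta'\eta$ with $\alpha'=(3x\alpha+2y\beta)/6$ and $\beta'=(2x\beta-3y^2\alpha)/6$. These are the coefficients themselves, not $f\alpha'$ and $f\beta'$. They lie in the maximal ideal but not in $(f)$. For example, $u=1+y$ gives $\dd(uf)=(1+\tfrac43 y)\,\dd f+\tfrac{x}{3}\,\eta$, so $B=x/3\notin(f)$. Hence (H3) only yields $A(0)\neq0$ and $B(0)=0$, not $B\in f\,\OO_{\CC^2,0}$.

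\emph{The integrability step in Step 2.} In two variables $\omega\wedge\dd\omega$ is a $3$-form and vanishes identically. So integrability carries no information and cannot show that $A|_C$ is constant. In fact the pair $A-c\in(f)$, $B\in(f)$ that you are aiming at is exactly the content of Proposition~\ref{prop:UV}. The paper obtains it only later, from (H4) via the logarithmic Bott pole at infinity; it is not a consequence of the local data at $p$.

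\emph{What you are missing.} The form \eqref{eq:KL} does not need $A-1$ and $B$ to be separately divisible by $f$; only their combination along the curve matters. By \eqref{eq:dfteta}, $\omega(\gamma(t))=\bigl(tA(\gamma(t))+B(\gamma(t))\bigr)\eta(\gamma(t))$, and $\eta(\gamma(t))\neq0$ for $t\neq0$. So zeros of this polynomial at $t\neq0$ are singular points on $C\setminus\{p\}$ in the affine chart, which (H2) excludes. What you did prove correctly, $A(0)\neq0$ and $B(0)=0$, shows the polynomial is $A(0)t+O(t^2)$. Hence it equals $ct$, and you may normalize $c=1$.

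Then $x(A-1)-yB$ and $y^2(A-1)+xB$ vanish along $\gamma$, so they equal $fK$ and $fL$ for polynomials $K,L$. This gives $A=1+xK+yL$ and $B=xL-y^2K$. The identities $x\,\dd f-y^2\eta=2f\,\dd x$ and $y\,\dd f+x\eta=3f\,\dd y$ then yield \eqref{eq:KL}. Note that $xK\,\dd f-y^2K\eta$ is absorbed into $2fK\,\dd x$ even though neither $xK$ nor $y^2K$ is divisible by $f$; this is precisely what your ``only the part divisible by $f$ can be absorbed'' remark overlooks.

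\emph{Step 3.} Once \eqref{eq:KL} is in hand, Step 3 is correct. The inclusion $(P,Q)\subseteq(x,y^2)$ together with equal colength $2$ is a valid alternative to the paper's $\mathfrak m_R^2=0$ argument.
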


\begin{proof}
Write
\[
 \omega=A\,\dd f+B\eta
\]
as in Lemma~\ref{lem:decomp}. On the normalization of $C$,
\[
 \omega|_C
 =
 \bigl(tA(\gamma(t))+B(\gamma(t))\bigr)\eta.
\]
By Lemma~\ref{lem:cuspFI}, this coefficient has a simple zero at $t=0$.
Hypothesis \textup{(H2)} implies that it has no other finite zero.
Thus, after multiplication by a constant,
\[
 tA(t^3,-t^2)+B(t^3,-t^2)=t.
\]
Consequently
\[
 x(A-1)-yB
 \quad\text{and}\quad
 y^2(A-1)+xB
\]
vanish on $(f=0)$ and are divisible by $f$. Put
\[
 K=\frac{x(A-1)-yB}{f},
 \qquad
 L=\frac{y^2(A-1)+xB}{f}.
\]
Then
\[
 A=1+xK+yL,
 \qquad
 B=xL-y^2K.
\]
Using
\[
 x\,\dd f-y^2\eta=2f\,\dd x,
 \qquad
 y\,\dd f+x\eta=3f\,\dd y,
\]
we obtain \eqref{eq:KL}.

It remains to prove \eqref{eq:singideal}. The algebra
\[
 R=\CC[x,y]/(P,Q)
\]
is supported only at the origin by \textup{(H2)} and has length
\[
 \dim_{\CC}R=\mu_p=2.
\]
Hence its maximal ideal satisfies $\mathfrak m_R^2=0$. In $R$,
\[
 \bar x=-\bar f\,\bar K\in\mathfrak m_R^2,
 \qquad
 \bar y^2=-\bar f\,\bar L\in\mathfrak m_R^2.
\]
Therefore
\[
 \bar x=\bar y^2=0.
\]
Thus $(x,y^2)\subset(P,Q)$, while the opposite inclusion follows
immediately from the definitions of $P,Q$.
\end{proof}

We next use the absence of saddle-nodes at infinity. We briefly recall
the connection that enters the argument. Let $S$ be an invariant
irreducible curve of a holomorphic foliation $\mathcal F$. On
\[
 S^{\circ}=S_{\mathrm{reg}}\setminus\Sing(\mathcal F),
\]
the curve is a leaf of $\mathcal F$, and its normal bundle carries the
canonical Bott partial connection; for the standard construction and the
formula below, see Tondeur \cite[Chapter~5, formula~(5.1)]{Tondeur1988}.
If $X$ is a local vector field tangent to $\mathcal F$ along $S^{\circ}$
and $\overline Y$ is a local section of the normal bundle, represented by
a vector field $Y$, then
\[
 \nabla_X^{\mathrm B}(\overline Y)=\overline{[X,Y]}.
\]
After choosing a local trivialization of the normal bundle, this partial
connection is represented along $S^{\circ}$ by a holomorphic $1$-form;
in the singular holomorphic setting considered here it extends
meromorphically to the normalization of $S$.  This meromorphic behavior
near a singular invariant branch is the one underlying the
Camacho--Sad index; see \cite{CamachoSad}. If
\[
 \nu:\widetilde S\longrightarrow S
\]
is the normalization, we shall use the pull-back connection
$\nu^*\nabla^{\mathrm B}$ on $\nu^*N_S$.

\begin{lemma}[Logarithmic Bott pole]
\label{lem:bott}
Let $S$ be an invariant irreducible branch of a germ of foliation whose
reduction contains no saddle-nodes, and let
\[
 \nu:\widetilde S\longrightarrow S
\]
be its normalization. Then the pull-back Bott connection
$\nu^*\nabla^{\mathrm B}$ has at most a logarithmic pole at the point of
$\widetilde S$ lying over the singularity.
\end{lemma}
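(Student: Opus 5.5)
We have to prove that $\nu^*\nabla^{\mathrm B}$ has at most a logarithmic pole at the point $\tilde s\in\widetilde S$ over the singularity. The plan is to compute the connection form explicitly from a defining $1$-form, and to control its pole order through the reduction of singularities, where the no-saddle-node hypothesis gives a linear-part lower bound.

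\emph{Setup.} Let the germ be defined by a holomorphic $1$-form $\omega$ with isolated singularity, and let $S=\{g=0\}$ with $g$ reduced. Invariance gives a factorization
\[
 g\omega=h\,\dd g+g\theta,
\]
with $h$ holomorphic, $h\not\equiv0$ on $S$, and $\theta$ a holomorphic $1$-form. This is the standard Saito-type division used in the Camacho--Sad index. On $S^\circ$, taking $\overline{\partial_\nu}$ dual to $\dd g$ as the trivialization of $N_S$, one checks from $\nabla_X^{\mathrm B}\overline Y=\overline{[X,Y]}$ that the Bott connection form is $-\theta/h|_{S^\circ}$, up to an exact term coming from the change of trivialization. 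Thus $\nu^*\nabla^{\mathrm B}$ has connection form $\nu^*(-\theta/h)$, which is meromorphic at $\tilde s$, and the task is to show that its pole order is at most one. The intrinsic invariant is the residue, namely the Camacho--Sad index $CS(\F,S,0)=-\Res_{\tilde s}\nu^*(\theta/h)$. The statement, however, concerns the order of the pole, not only its residue.

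\emph{Reduction.} Let $\pi:M\to(\CC^2,0)$ be the minimal reduction of singularities of $\F$ that also resolves $S$. Then the strict transform $\widetilde S$ is smooth, and it crosses a single exceptional divisor $E$ transversally at a point $s'$. Since $\pi|_{\widetilde S}$ is the normalization, it suffices to compute the Bott connection of $\pi^*\F$ along $\widetilde S$ near $s'$, in a trivialization of $N_{\widetilde S}$ pulled back from a trivialization of $N_S$ away from $0$.

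\emph{Local model.} At $s'$ the foliation $\pi^*\F$ is reduced, and by hypothesis it is not a saddle-node. Hence it is given by $\lambda_1 u(1+\dots)\dd v+\lambda_2 v(1+\dots)\dd u$ with $\lambda_1\lambda_2\neq0$, where $\widetilde S=\{v=0\}$ and $E=\{u=0\}$. Being a non-degenerate reduced singularity, it is linearizable in the $v$-direction to first order along $\widetilde S$. Concretely, with $v\,\dd u$ normalized, the form is $u\,\dd v+\lambda v\,\dd u+v\,\alpha$, with $\alpha$ holomorphic and $\lambda=\lambda_2/\lambda_1$. The Bott connection form along $v=0$ with respect to the trivialization $\overline{\partial_v}$ is therefore
\[
 -\frac{\lambda\,\dd u+\alpha|_{v=0}}{u},
\]
which has a logarithmic pole with residue $-\lambda$.

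\emph{Main obstacle.} The expected difficulty is the comparison of trivializations. The pulled-back trivialization of $N_S$ differs from $\overline{\partial_v}$ by a meromorphic function $\phi(u)$, since it may vanish or have a pole along $E\cap\widetilde S$. This changes the connection form by $\dd\phi/\phi$, which is again logarithmic. So the logarithmic character is preserved, while the residue shifts by an integer; this integer shift is the known relation between the Camacho--Sad index on $S$ and on $\widetilde S$. I would verify this carefully by following a single blow-up step, where the change is the multiplication by $u^{m}$ coming from the multiplicity $m$ of $S$, and then concluding by induction. The saddle-node exclusion is essential: for a saddle-node with strong separatrix $\widetilde S$, the connection form is $\dd u/u^{k+1}$-type, which has a pole of higher order.
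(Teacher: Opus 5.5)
Your proposal is correct and follows essentially the same route as the paper: you pass to a reduction in which the strict transform is smooth, read off the simple pole of the Bott connection at the reduced nondegenerate point where $\widetilde S$ meets the exceptional divisor, and observe that comparing trivializations changes the connection form only by $d\phi/\phi$ with $\phi$ meromorphic, which is logarithmic. Two small points. First, your local-model step assumes that $s'$ is a singular point, so you should add, as the paper does, the case where $\widetilde S$ crosses a dicritical component at a regular point; there the connection is holomorphic in a local frame. Second, your closing remark is reversed: for a saddle-node the pole of order $k+1$ occurs along the weak (center-manifold) separatrix, whereas along the strong separatrix the Bott connection is holomorphic.
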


\begin{proof}
By a simultaneous resolution of $(\mathcal F,S)$ we mean a finite
composition of point blow-ups
\[
 \pi:(M,D)\longrightarrow(\CC^2,0)
\]
for which the strict transform $\widetilde S$ is smooth, the union of
$\widetilde S$ with the exceptional divisor has normal crossings, and
the transformed foliation has only reduced singularities along this
divisor.  Such a modification exists by combining the embedded
resolution of the plane branch with Seidenberg's reduction theorem and
passing to a common refinement; see \cite{Seidenberg}.  Since $S$ is
invariant, its strict transform remains invariant by the transformed
foliation.

At a regular point the Bott connection is holomorphic. At a reduced
nondegenerate singularity choose coordinates $(z,s)$ such that the
strict transform of the branch is $(z=0)$ and the foliation is defined by
\[
 z\,a(z,s)\,\dd s-s\,b(z,s)\,\dd z,
 \qquad
 a(0,0)b(0,0)\neq0.
\]
Relative to the normal section $z$, the connection is
\[
 -\frac{a(0,s)}{b(0,s)}\,\frac{\dd s}{s},
\]
hence has a simple pole.

Under blow-up, and under multiplication of a local defining section by
a unit, the connection changes by logarithmic differentials. Thus a
pole of order greater than one cannot arise along the strict transform
unless a saddle-node occurs. By hypothesis, no saddle-node occurs in
the reduction.
\end{proof}

\begin{proposition}[Polynomial Bott normal form]
\label{prop:UV}
Let $\omega$ be the polynomial $1$-form in the affine chart $Z=1$ defining
the foliation $\F$ under the standing hypotheses of
Theorem~\ref{thm:main}. Then there exist polynomials $U,V\in\CC[x,y]$ such
that
\begin{equation}
 \boxed{
 \omega
 =
 (1+fU)\,\dd f+fV\eta.
 }
 \label{eq:UV}
\end{equation}
They satisfy
\begin{equation}
 K=xU-yV,
 \qquad
 L=y^2U+xV.
 \label{eq:UVKL}
\end{equation}
\end{proposition}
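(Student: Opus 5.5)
The claim amounts to showing that, in the decomposition $\omega=A\,\dd f+B\eta$ of Lemma~\ref{lem:decomp} (with $A=1+xK+yL$ and $B=xL-y^2K$ as in the proof of Proposition~\ref{prop:KL}), we have
\[
 f\mid A-1 \qquad\text{and}\qquad f\mid B .
\]
Granting this, we put $U=(A-1)/f$ and $V=B/f$. Substituting $A=1+xK+yL$ and $B=xL-y^2K$ and solving the resulting linear system then gives \eqref{eq:UVKL}. Indeed, $K=xU-yV$ and $L=y^2U+xV$ are exactly the inverse relations, since $x\cdot x+y\cdot y^2=f$.

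Since $f$ is irreducible and $\gamma$ parametrizes $(f=0)$, divisibility is equivalent to the identities $a(t)\equiv1$ and $b(t)\equiv0$, where $a(t)=A(\gamma(t))$ and $b(t)=B(\gamma(t))$. The proof of Proposition~\ref{prop:KL} already gives $ta+b=t$. Hence it suffices to prove that $a$ is constant, or equivalently that $b(t)=t(1-a(t))$ is a constant multiple of $t$, and then to show that this constant is $1$.

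The key step is to compute the Bott connection of $\F$ along $C$ in the coordinate $t$ on the normalization $\widetilde C\simeq\PP^1$. Since $\eta$ vanishes on $T C$ (one checks $\gamma^*\eta=0$), the foliation near $C$ is, to first order in $f$, given by $a\,\dd f+b\,\eta$. I would take $f$ as normal section and write $\eta=\frac{1}{6}\cdot\frac{\dd f}{f}\wedge(\cdot)$ via $\dd f\wedge\eta=6f\,\dd x\wedge\dd y$ and $\dd\eta=5\,\dd x\wedge\dd y$. From the relation $\dd\omega=\beta\wedge\omega$ restricted to $C$, I would extract an explicit rational $1$-form $\beta(t)\,\dd t$ built from $a,a',b$ and $1/t$. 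It is holomorphic for $t\in\CC^*$, because $\Sing(\F)=\{p,q\}$. At $t=0$ it is controlled by Lemma~\ref{lem:cuspFI}: the first integral $uf$ makes the holonomy trivial and the connection logarithmic. At $t=\infty$, which corresponds to $q$, Lemma~\ref{lem:bott} together with (H4) forces at most a logarithmic pole.

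For a nonconstant polynomial $a$, the polynomial terms in $\beta$ (of the shape $a'/a$ corrections combined with $b/t$ terms) produce a pole of order at least $2$ at $t=\infty$. A degree count in $t$ should therefore show that $a$ is constant. Then $b=(1-a)t$. The ideal equality \eqref{eq:singideal} together with the local form $H=uf$ at the cusp then shows that the $\eta$-coefficient along $C$ must vanish: otherwise the linear part at $p$ would not be that of a unit multiple of $\dd(uf)$. Hence $a=1$ and $b=0$.

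The main obstacle is the explicit Bott computation. Because $\eta$ is not of the form $f\cdot(\text{holomorphic form})$, the naive rule ``$\omega=h\,\dd f+f\theta$ gives connection $-\theta/h$'' does not apply directly. I would first rewrite $\omega$ in the chart at infinity near $q$, using the coordinates $(y/x,z/x)$ with $C$ given by $z+y^3=0$ (here $y=Y/X$, $z=Z/X$), and read off the pole order there directly. The affine computation would serve only to fix the behaviour at $t=0$.
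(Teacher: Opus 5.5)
Your outline follows the paper's route: the Bott connection along $C$ in the coordinate $t$ is holomorphic on $\CC$, and it has at most a logarithmic pole at $t=\infty$ by Lemma~\ref{lem:bott}, because replacing the section $f$ by $w=v^3f$ near $q$ only adds a logarithmic term. But the proposal stops where the proof happens. The connection is never computed, the anticipated $a'/a$ terms do not occur, and you only say that a degree count ``should'' work.

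The obstacle you name is not real. You yourself write $A=1+xK+yL$ and $B=xL-y^2K$. Substituting these and using $x\,\dd f-y^2\eta=2f\,\dd x$ and $y\,\dd f+x\eta=3f\,\dd y$ gives exactly \eqref{eq:KL}, i.e.\ $\omega=\dd f+f\alpha$ with $\alpha=2K\,\dd x+3L\,\dd y$ polynomial. So the naive rule applies with coefficient $1$ in front of $\dd f$. The Bott connection in the trivialization $f$ is $\pm\gamma^*\alpha$, and
\[
 \gamma^*\alpha
 =6t\bigl(tK(\gamma(t))-L(\gamma(t))\bigr)\,\dd t
 =\frac{6\,(a(t)-1)}{t}\,\dd t
 =-\frac{6\,b(t)}{t^{2}}\,\dd t .
\]
This is a polynomial $1$-form, since $a(t)-1=t^2\bigl(tK(\gamma(t))-L(\gamma(t))\bigr)$. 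A nonzero polynomial form $h(t)\,\dd t$ has a pole of order at least $2$ at $t=\infty$, so $\gamma^*\alpha=0$. This gives $a\equiv1$ and $b\equiv0$ at once, with no separate step for the constant. It is also holomorphic at $t=0$, so the appeal to Lemma~\ref{lem:cuspFI} there is unnecessary.

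Your separate argument that ``the constant is $1$'' is also not valid as stated. From $ta+b=t$ one always has $B(0,0)=b(0)=0$. So the linear part of $\omega$ at $p$ is $2A(0,0)\,x\,\dd x$, whatever constant value $a$ takes. That linear part is compatible with a unit multiple of $\dd(uf)$ in every case, and \eqref{eq:singideal} likewise cannot detect it. The correct reason is simply $a(0)=A(0,0)=1$, which follows from $A=1+xK+yL$. Equivalently, $b=(1-c)t$ is impossible, since $t\notin\CC[t^2,t^3]$.
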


\begin{proof}
Put
\[
 \alpha=2K\,\dd x+3L\,\dd y.
\]
The restriction of $\alpha$ to the normalization of $C$ is the Bott
connection in the affine trivialization supplied by $f$, and
\[
 \gamma^*\alpha
 =
 6t\bigl(tK(t^3,-t^2)-L(t^3,-t^2)\bigr)\,\dd t.
\]
This is a polynomial one-form in $t$.

Near $q$, in coordinates
\[
 u=\frac YX,
 \qquad
 v=\frac ZX,
 \qquad
 w=v+u^3,
\]
one has
\[
 f=\frac{w}{v^3}.
\]
Thus replacing the affine section $f$ by the holomorphic defining
section $w$ changes the connection only by a logarithmic differential.
Along $C$,
\[
 v=-u^3,
\]
so this correction has at most a logarithmic pole at infinity.

By Lemma~\ref{lem:bott}, $\gamma^*\alpha$ has at most a simple pole at
$t=\infty$. A nonzero polynomial one-form $h(t)\,\dd t$ has a pole of
order at least two at infinity. Hence
\[
 \gamma^*\alpha=0.
\]
Therefore
\[
 xL-y^2K=0
 \quad\text{on }C.
\]
It follows that $xL-y^2K$ is divisible by $f$. The relations obtained
in the proof of Proposition~\ref{prop:KL} then also show that $A-1$ is divisible
by $f$. Write
\[
 A=1+fU,
 \qquad
 B=fV.
\]
This gives \eqref{eq:UV}, and solving for $K,L$ gives
\eqref{eq:UVKL}.
\end{proof}

\section{The global contact at infinity}
\label{sec:infinity}

Let $r\in\Sing(\F)$ be an isolated singularity.  If, in local
coordinates centered at $r$, the foliation is represented by a reduced
holomorphic $1$-form
\[
 \omega=A(x,y)\,\dd x+B(x,y)\,\dd y,
 \qquad \gcd(A,B)=1,
\]
its \emph{Milnor number} is
\[
 \mu_r(\F)
 :=
 \dim_{\CC}\frac{\OO_{\PP^2,r}}{(A,B)}.
\]
The dimension is finite because the singularity is isolated, and the
number is independent of the chosen local coordinates and of
multiplication of $\omega$ by a holomorphic unit; see, for instance,
\cite{Brunella2015}.

We shall use the standard global Milnor-number, or Chern-class, formula
for a degree-$d$ foliation on $\PP^2$ with isolated singularities:
\begin{equation}
 \sum_{r\in\Sing(\F)}\mu_r(\F)
 =
 d^2+d+1.
 \label{eq:global-milnor-formula}
\end{equation}
It follows by computing the second Chern class of the singular scheme of
the foliation; see, for instance, \cite{Brunella2015}.
Since $\mu_p=2$ and $\Sing(\F)=\{p,q\}$,
\begin{equation}
 \mu_q=d^2+d-1.
 \label{eq:muqglobal}
\end{equation}

Put
\[
 m=\max\{\deg K,\deg L\}.
\]
If $K=L=0$, then $\omega=\dd f$ and we are done. Assume henceforth
that $K,L$ do not both vanish.

\begin{lemma}[Leading homogeneous polynomial]
\label{lem:J}
Let $U,V\in\CC[x,y]$ be the polynomials appearing in the polynomial Bott
normal form \eqref{eq:UV} of Proposition~\ref{prop:UV}, and let
\[
 m=\max\{\deg K,\deg L\},
\]
where $K,L$ are the polynomials introduced in
Proposition~\ref{prop:KL}. Then
\[
 \deg U\leq m-2,
 \qquad
 \deg V\leq m-1.
\]
Moreover, there exists a homogeneous polynomial $J_{m-3}$ of degree
$m-3$ such that
\begin{equation}
 U_{m-2}=xJ_{m-3},
 \qquad
 V_{m-1}=-3y^2J_{m-3}.
 \label{eq:J}
\end{equation}
In particular
\[
 m\geq3
\]
for every nonstandard foliation, and its projective degree is
\begin{equation}
 d=m+2.
 \label{eq:dm2}
\end{equation}
\end{lemma}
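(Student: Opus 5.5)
The plan is to invert the linear relations \eqref{eq:UVKL}, compare top-degree homogeneous parts, and then use the degree of $\F$ and the structure at infinity to get the factorization \eqref{eq:J}.

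\emph{Step 1: degree bounds.} A direct computation from \eqref{eq:UVKL} gives
\[
 xK+yL=fU,
 \qquad
 xL-y^2K=fV .
\]
Since $\deg f=3$, $\deg(xK+yL)\le m+1$ and $\deg(xL-y^2K)\le m+2$, this yields $\deg U\le m-2$ and $\deg V\le m-1$.

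\emph{Step 2: leading parts.} The top homogeneous part of $f$ is $y^3$. Taking the degree $m+1$ and $m+2$ parts of the two identities gives
\[
 y^3U_{m-2}=xK_m+yL_m,
 \qquad
 y^3V_{m-1}=-y^2K_m .
\]
Hence $K_m=-yV_{m-1}$. Using \eqref{eq:UVKL} directly, $L_m=y^2U_{m-2}+xV_{m-1}$. So the leading data are governed entirely by the pair $(U_{m-2},V_{m-1})$.

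\emph{Step 3: the degree of $\F$.} From \eqref{eq:KL}, $\omega$ has coefficients of degree $\le m+3$. Their top part is
\[
 y^3\bigl(2K_m\,\dd x+3L_m\,\dd y\bigr).
\]
There are two cases.

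\begin{enumerate}
\item If this top form is not annihilated by the radial field, i.e. if $2xK_m+3yL_m\ne0$, then $d=m+3$ and the line at infinity is $\F$-invariant.
\item If $2xK_m+3yL_m=0$, then the line at infinity is not invariant and $d=m+2$.
\end{enumerate}

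In the second (dicritical) case, substituting the expressions from Step 2 gives
\[
 2xK_m+3yL_m=y\bigl(xV_{m-1}+3y^2U_{m-2}\bigr)=0 .
\]
Since $\gcd(x,y^2)=1$, we get $x\mid U_{m-2}$. Writing $U_{m-2}=xJ_{m-3}$ then forces $V_{m-1}=-3y^2J_{m-3}$, which is \eqref{eq:J} together with $d=m+2$.

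\emph{Step 4: $m\ge3$.} If $m\le2$, then $J_{m-3}=0$ for degree reasons. Hence $U_{m-2}=V_{m-1}=0$, and Step 2 gives $K_m=L_m=0$, contradicting the definition of $m$.

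\emph{Main obstacle: excluding the first case.} I expect this step to be the real work. If the line at infinity is invariant, the singularities on it are the zeros of the degree-$(m+4)$ homogeneous polynomial
\[
 G=y^3\bigl(3xL_m-2yK_m\bigr),
\]
counted with the tangency multiplicity of the invariant line. By (H2), the only singular point at infinity is $q=[1:0:0]$, so $G$ must equal $c\,y^{m+4}$. After dividing by $y^3$, this becomes the identity
\[
 3xy^2U_{m-2}+(3x^2+2y^2)V_{m-1}=c\,y^{m+1}.
\]
I would first try to show that this identity forces a contradiction with $(U_{m-2},V_{m-1})\neq(0,0)$. The tools would be reducing modulo $x$ and modulo $y$, which gives $x^2\mid V_{m-1}$, and comparing top $x$-degrees.

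If that algebra does not close, I would fall back on a local argument at $q$. Here the line at infinity and $C$ would both be invariant and mutually tangent with contact $3$ at $q$. I would combine the Camacho--Sad indices of the two curves at $q$ with the global value $\mu_q=d^2+d-1$ from \eqref{eq:muqglobal}, and with the absence of saddle-nodes required by (H4) through Lemma~\ref{lem:bott}. The aim is to show that such a configuration is incompatible with $d=m+3$.
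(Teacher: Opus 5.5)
Your Steps 1--4 are correct. In your second case they are the paper's proof with slightly different bookkeeping: the paper writes $K_m=3yH_{m-1}$, $L_m=-2xH_{m-1}$ and compares with \eqref{eq:UVKL}, while your identities $xK+yL=fU$, $xL-y^2K=fV$ give the degree bounds without any case distinction.

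\textbf{The gap is your first case.} The lemma asserts $d=m+2$ and $m\geq3$ for every nonstandard foliation. You prove both only when the line at infinity $L_\infty$ is not invariant. The paper's proof does not supply the missing step either. It starts from the assertion that the top part satisfies $2xK_m+3yL_m=0$, which is exactly the assumption that $L_\infty$ is not invariant. That assumption is not automatic: for the standard model $\dd f$ the top part is $3y^2\,\dd y$, and $L_\infty$ is invariant. So you have located a real point, but your proposal leaves it open.

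\textbf{The route you sketch does not close it.} On an invariant line at infinity the singular points are the zeros of the radial contraction $xa_N+yb_N$ of the top part $a_N\,\dd x+b_N\,\dd y$. Here this is
\[
 y^3(2xK_m+3yL_m)=y^4\bigl(xV_{m-1}+3y^2U_{m-2}\bigr),
\]
not $y^3(3xL_m-2yK_m)$. With the correct polynomial, (H2) only gives $xV_{m-1}+3y^2U_{m-2}=c\,y^m$ with $c\neq0$. This is solved by $U_{m-2}=xJ_{m-3}+\frac{c}{3}y^{m-2}$ and $V_{m-1}=-3y^2J_{m-3}$. So leading-term algebra yields no contradiction, and even $m=2$ survives.

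Concretely, take $\omega=(1+\kappa f)\,\dd f+\beta f\eta$ with $\kappa\beta\neq0$. It has degree $5=m+3$ and leaves $C$ and $L_\infty$ invariant. Its invariants are $\Sing(\F)=\{p,q\}$, $\mu_p=2$ and $\mu_q=29=d^2+d-1$. Thus (H1), (H2), the Milnor count and the Camacho--Sad relations cannot exclude this case.

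This example is excluded by (H3). The weight-$6$ equation of Lemma~\ref{lem:recurrence} is $X_0(\varphi_5)=-6\beta f$, which forces $\beta=0$, and then $\omega$ is not saturated. Any correct argument must therefore use (H3) or (H4) in an essential way in this case.

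One way to do this is to redo the contact analysis at $q$. There the saturated form becomes $3wS\,\dd u-(2vT+uS)\,\dd w$ with $S=3u^2T+w\widehat V$, so $\mu_q=7m+12+I_0(\widehat V,T)$. The global count then forces the B\'ezout-extremal value $I_0(\widehat V,T)=m^2-1$. This must be eliminated together with the cusp recurrence for $m\leq4$, since $d=m+3\leq7$.
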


\begin{proof}
The top homogeneous part of the affine form satisfies the projective
radial relation
\[
 2xK_m+3yL_m=0.
\]
Hence
\[
 K_m=3yH_{m-1},
 \qquad
 L_m=-2xH_{m-1}.
\]
The identities \eqref{eq:UVKL} imply
\[
 \deg U\leq m-2,
 \qquad
 \deg V\leq m-1.
\]
Taking the homogeneous part of degree $m$ gives
\[
 -yV_{m-1}=3yH_{m-1},
\]
\[
 y^2U_{m-2}+xV_{m-1}=-2xH_{m-1}.
\]
Thus
\[
 V_{m-1}=-3H_{m-1},
 \qquad
 y^2U_{m-2}=xH_{m-1}.
\]
Since $\gcd(x,y^2)=1$,
\[
 H_{m-1}=y^2J_{m-3},
\]
which proves \eqref{eq:J}.

If $m<3$, the highest homogeneous part must vanish, contradicting the
definition of $m$. Finally, the affine form has ordinary degree $m+3$
and its top homogeneous part satisfies the radial relation, so its
projective degree is $m+2$.
\end{proof}

Near
\[
 q=[1:0:0]
\]
use
\[
 u=Y/X,
 \qquad
 v=Z/X,
 \qquad
 w=v+u^3.
\]
Define
\[
 \widehat U=v^{m-2}U(1/v,u/v),
 \qquad
 \widehat V=v^{m-1}V(1/v,u/v),
\]
and
\[
 P_\infty=\frac{3u^2\widehat U+\widehat V}{v}.
\]
The numerator is divisible by $v$ by \eqref{eq:J}.

Put
\begin{equation}
 T=v^{m+1}+w\widehat U,
 \qquad
 G=3u^2v^m+wP_\infty.
 \label{eq:TG}
\end{equation}

For two germs $A,B\in\OO_{\CC^2,0}$ having no common irreducible
component, their \emph{local intersection multiplicity at the origin}
is
\[
 I_0(A,B)
 :=
 \dim_{\CC}\frac{\OO_{\CC^2,0}}{(A,B)}.
\]
This dimension is finite precisely when the curve germs $(A=0)$ and
$(B=0)$ have no common irreducible component.  If $(A=0)$ is an
irreducible branch with a primitive parametrization
$\gamma(t)=(u(t),w(t))$ and $B$ does not vanish identically on it, then
\[
 I_0(A,B)=\ord_{t=0}B(\gamma(t)).
\]
We shall also use the standard additivity properties of local
intersection multiplicity.  See, for example,
\cite[Chapters~2 and~3]{Wall2004}.

We now express the polynomial Bott normal form \eqref{eq:UV} in the
local coordinates $(u,w)$ at $q$.  After clearing denominators, a
common factor $v=w-u^3$ appears.  Removing divisorial common factors
from a local defining $1$-form is the usual saturation operation; the
resulting coefficients have no common nonunit factor in
$\OO_{\CC^2,0}$.

\begin{proposition}[Local form and contact formula]
\label{prop:contact}
The germ of $\F$ at $q=[1:0:0]$ is defined by the saturated
holomorphic $1$-form
\begin{equation}
 \Theta
 =
 3wG\,\dd u-(2T+uG)\,\dd w.
 \label{eq:Theta}
\end{equation}
Moreover,
\begin{equation}
 \mu_q=3m+3+I_0(G,T).
 \label{eq:mucontact}
\end{equation}
Consequently,
\begin{equation}
 \boxed{
 I_0(G,T)=m^2+2m+2.
 }
 \label{eq:requiredcontact}
\end{equation}
\end{proposition}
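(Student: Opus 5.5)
The plan is to obtain $\Theta$ by substituting the chart at $q$ directly into the polynomial Bott normal form \eqref{eq:UV}. Then I compute $\mu_q$ by additivity of intersection multiplicities, and finally compare with \eqref{eq:muqglobal} using $d=m+2$ from \eqref{eq:dm2}.

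\emph{Step 1: the local form.} In the chart $u=Y/X$, $v=Z/X$ we have $x=1/v$ and $y=u/v$, so $\dd x=-\dd v/v^2$ and $\dd y=(v\,\dd u-u\,\dd v)/v^2$. This gives
\[
 \eta=\frac{3v\,\dd u-u\,\dd v}{v^3},\qquad f=\frac{w}{v^3},\qquad U=v^{-(m-2)}\widehat U,\qquad V=v^{-(m-1)}\widehat V .
\]
Multiplying \eqref{eq:UV} by $v^{m+5}$ yields
\[
 T\,(v\,\dd w-3w\,\dd v)+w\widehat V\,(3v\,\dd u-u\,\dd v).
\]
Now substitute $\dd v=\dd w-3u^2\,\dd u$ and use $v-3w=-2v-3u^3$ (since $w=v+u^3$). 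The coefficient of $\dd u$ becomes $3w(3u^2T+w\widehat V)$. The coefficient of $\dd w$ becomes $-2vT-u(3u^2T+w\widehat V)$. The key identity is
\[
 3u^2T+w\widehat V=3u^2v^{m+1}+w(3u^2\widehat U+\widehat V)=v\,G .
\]
This uses the definition of $P_\infty$ and the divisibility guaranteed by \eqref{eq:J}. Hence the form equals $v\,\Theta$, and removing the factor $v$ gives \eqref{eq:Theta}.

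\emph{Step 2: saturation.} I must check that $3wG$ and $2T+uG$ have no common nonunit factor. On $w=0$ we have $v=-u^3$, hence
\[
 T|_{w=0}=(-1)^{m+1}u^{3m+3},\qquad uG|_{w=0}=3(-1)^m u^{3m+3},\qquad (2T+uG)|_{w=0}=(-1)^m u^{3m+3}\neq0,
\]
so $w$ is not a common factor. A common factor of $G$ and $2T+uG$ would be a common factor of $G$ and $T$. Such a factor would produce a curve of zeros of $\Theta$ through $q$, contradicting the isolatedness of $q$ in \textup{(H2)}. This is also why $I_0(G,T)$ is finite.

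\emph{Step 3: the Milnor number.} Since units do not matter, additivity gives
\[
 \mu_q=I_0(w,2T+uG)+I_0(G,2T+uG)=I_0(w,2T+uG)+I_0(G,T).
\]
Here $I_0(G,2T+uG)=I_0(G,2T)=I_0(G,T)$. Parametrizing the smooth branch $w=0$ by $u$, the computation in Step 2 gives $I_0(w,2T+uG)=3m+3$, which proves \eqref{eq:mucontact}. Finally, \eqref{eq:muqglobal} with $d=m+2$ gives
\[
 \mu_q=(m+2)^2+(m+2)-1=m^2+5m+5 ,
\]
and subtracting $3m+3$ yields \eqref{eq:requiredcontact}.

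The main obstacle is Step 1: the bookkeeping of powers of $v$, together with the recognition of $vG$ through the identity $w=v+u^3$ and the divisibility of $3u^2\widehat U+\widehat V$ by $v$. Steps 2 and 3 are then routine.
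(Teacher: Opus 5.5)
Your proposal is correct and follows essentially the same route as the paper. You substitute the chart at $q$ into \eqref{eq:UV}, clear $v^{m+5}$ and remove the common factor $v$; your identity $3u^2T+w\widehat V=vG$ is the paper's \eqref{eq:fundamentalidentity}. You then check saturation via $(2T+uG)(u,0)=(-1)^m u^{3m+3}$ together with (H2), split $\mu_q=I_0(w,2T+uG)+I_0(G,T)$, and compare with \eqref{eq:muqglobal}, all as in the paper. The only cosmetic difference is that you compute in $(u,v)$ before passing to $(u,w)$, whereas the paper writes $\dd f$ and $\eta$ directly in the coordinates $(u,w)$.
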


\begin{proof}
Since
\[
 f=\frac{w}{v^3},
\]
one computes
\[
 \dd f
 =
 v^{-4}\bigl((v-3w)\,\dd w+9wu^2\,\dd u\bigr),
\]
and
\[
 \eta=v^{-3}(3w\,\dd u-u\,\dd w).
\]
Substituting these expressions into \eqref{eq:UV} and multiplying by
$v^{m+5}$ gives
\[
 v\bigl(3wG\,\dd u-(2T+uG)\,\dd w\bigr).
\]
Recall that if a holomorphic $1$-form
\[
 \omega_0=A\,\dd u+B\,\dd w
\]
has a common factor $h$, so that $A=hA_1$ and $B=hB_1$, then away from
$(h=0)$ the forms $\omega_0$ and
\[
 \omega_1=A_1\,\dd u+B_1\,\dd w
\]
define the same tangent distribution. The holomorphic foliation obtained by
extending this distribution across $(h=0)$ is represented by $\omega_1$.
Repeating this division until the coefficients are relatively prime gives
the saturated local defining form, unique up to multiplication by a
holomorphic unit. Thus the common factor $v$ above is removed, giving
\eqref{eq:Theta}.

There is no further common divisor.  Indeed, let $h$ be an
irreducible common divisor of $wG$ and $2T+uG$.  If $h=w$, then $w$
would divide $2T+uG$, whereas
\[
 (2T+uG)(u,0)=(-1)^m u^{3m+3}\neq0.
\]
Hence $h\neq w$, so $h$ divides $G$; since it also divides $2T+uG$, it
then divides $T$.  This would produce a positive-dimensional singular
locus, contrary to \textup{(H2)}.

On $w=0$ one has $v=-u^3$, and
\[
 (2T+uG)(u,0)=(-1)^m u^{3m+3}.
\]
Hence
\[
 \mu_q
 =
 I_0(wG,2T+uG)
 =
 I_0(w,2T+uG)+I_0(G,2T+uG).
\]
The first term is $3m+3$, while
\[
 (G,2T+uG)=(G,T).
\]
Thus
\[
 \mu_q=3m+3+I_0(G,T).
\]
Using $d=m+2$ in \eqref{eq:muqglobal} gives
\eqref{eq:requiredcontact}.
\end{proof}

We shall also use the identity
\begin{equation}
 vG=3u^2T+w\widehat V.
 \label{eq:fundamentalidentity}
\end{equation}

\section{The weighted cusp equation}
\label{sec:weighted}

Let
\[
 X_0=3y^2\partial_x-2x\partial_y,
 \qquad
 E_0=3x\partial_x+2y\partial_y.
\]
A tangent vector field for \eqref{eq:UV} is
\[
 Y=(1+fU)X_0+fV E_0.
\]
Since
\[
 X_0(f)=0,
 \qquad
 E_0(f)=6f,
\]
the local first integral
\[
 H=fe^\varphi
\]
satisfies
\begin{equation}
 Y(\varphi)=-6fV.
 \label{eq:cohom}
\end{equation}

Use the cusp weights
\[
 \wt(x)=3,
 \qquad
 \wt(y)=2.
\]
For $N\ge0$, let
\[
 \mathcal H_N
 :=
 \left\{
 P\in\CC[x,y];\;
 P(\lambda^3x,\lambda^2y)=\lambda^N P(x,y)
 \text{ for all }\lambda\in\CC^*
 \right\}.
\]
Equivalently,
\[
 \mathcal H_N
 =
 \operatorname{span}_{\CC}
 \{x^iy^j;\ 3i+2j=N\}.
\]
For a polynomial or convergent power series $P$, we write $[P]_N$ for
its weighted homogeneous component belonging to $\mathcal H_N$.

\begin{lemma}[Weighted recurrence]
\label{lem:recurrence}
Write
\[
 \varphi=\sum_{r\geq1}\varphi_r,
 \qquad
 \varphi_r\in\mathcal H_r.
\]
The component of target weight $N$ in \eqref{eq:cohom} is
\begin{equation}
 X_0(\varphi_{N-1})
 =
 [-6fV]_N
 -
 \sum_{a+r+1=N}[fU]_aX_0(\varphi_r)
 -
 \sum_{b+r=N}[fV]_bE_0(\varphi_r).
 \label{eq:recurrence}
\end{equation}
Furthermore
\[
 X_0:\mathcal H_{N-1}\longrightarrow\mathcal H_N
\]
is surjective unless
\[
 N\equiv0,2\pmod6,
\]
in which case its cokernel is one-dimensional.
\end{lemma}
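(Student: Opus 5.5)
The lemma has two parts, and I would treat them separately. The recurrence \eqref{eq:recurrence} is pure weight bookkeeping. The surjectivity statement is a finite-dimensional linear-algebra computation on the spaces $\mathcal H_N$.

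\textbf{The recurrence.} First record the weights of the operators. With $\wt(x)=3$ and $\wt(y)=2$, both $X_0=3y^2\partial_x-2x\partial_y$ and $E_0=3x\partial_x+2y\partial_y$ preserve weighted homogeneity. $X_0$ raises weight by $1$: the term $y^2\partial_x$ shifts weight by $4-3$, and $x\partial_y$ by $3-2$. $E_0$ is the weighted Euler field, acting on $\mathcal H_r$ as multiplication by $r$, so it preserves weight. Substitute $\varphi=\sum\varphi_r$ into $Y(\varphi)=X_0(\varphi)+fU\,X_0(\varphi)+fV\,E_0(\varphi)=-6fV$, decompose $fU$ and $fV$ into weighted components, and take the component of weight $N$. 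The term $X_0(\varphi)$ contributes $X_0(\varphi_{N-1})$, the product $[fU]_aX_0(\varphi_r)$ has weight $a+r+1$, and $[fV]_bE_0(\varphi_r)$ has weight $b+r$. Moving everything except $X_0(\varphi_{N-1})$ to the right gives \eqref{eq:recurrence}. The sums are finite because $\wt(f)=6$ forces every nonzero component to have $a,b\ge6$, so only $r\le N-6$ occurs.

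\textbf{The cokernel.} Dimension counting alone does not determine the cokernel, since $\dim\mathcal H_N-\dim\mathcal H_{N-1}$ can be $0$ or $1$. I would use the structure of $X_0$ instead. Its kernel on polynomials is $\CC[f]$: $X_0$ is the Hamiltonian field of $f$, and the only weighted-homogeneous first integrals are powers of $f$, as one sees by restricting to the generic orbit or by the explicit computation below. Hence $\ker(X_0|_{\mathcal H_{N-1}})$ is one-dimensional exactly when $6\mid N-1$, and zero otherwise. The monomials $x^iy^j$ with $3i+2j=N$ have $j$ ranging over one residue class mod $3$ (of fixed parity), so $\dim\mathcal H_N=\lfloor N/6\rfloor+\epsilon(N)$ with $\epsilon(N)=0$ for $N\equiv1\pmod6$ and $\epsilon(N)=1$ otherwise. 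Then $\dim\operatorname{coker}=\dim\mathcal H_N-\dim\mathcal H_{N-1}+\dim\ker$, checked in each residue class:
\begin{itemize}
\item $N\equiv0$: $1-0+0=1$;
\item $N\equiv1$: $0-1+1=0$;
\item $N\equiv2$: $1-0+0=1$;
\item $N\equiv3,4,5$: $0$.
\end{itemize}
This yields exactly the claimed cokernel: one-dimensional when $N\equiv0,2\pmod6$, and zero otherwise.

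\textbf{Main obstacle.} The kernel claim is where the real content lies, and I would verify it directly. $X_0$ acts on the monomial basis by
\[
x^iy^j\mapsto 3i\,x^{i-1}y^{j+2}-2j\,x^{i+1}y^{j-1}.
\]
This is a two-term recurrence along the chain of monomials ordered by $i$. Solving $X_0(P)=0$ forces the coefficients to be determined by the extreme one. One then checks that a nonzero solution exists only when the chain has endpoints compatible with $f^k=(x^2+y^3)^k$, that is, when the weight is divisible by $6$. The edge conventions matter here: the constant term for $N-1=0$, and empty $\mathcal H_{N-1}$ for small $N$. I would check these small cases separately, since for example $\mathcal H_1=0$.
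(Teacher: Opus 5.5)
Your proposal is correct and follows the paper's own argument. The recurrence is weight bookkeeping, with $X_0$ of weight $1$ and $E_0$ of weight $0$. The cokernel statement comes from the kernel of $X_0$ being spanned by powers of $f$, together with a monomial count and rank--nullity; you carry this out more explicitly than the paper, and your dimension formula, residue-class table and two-term-recurrence check of the kernel are all right.

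Two harmless slips. In $3i+2j=N$ it is $i$ that has fixed parity, while $j$ runs over a fixed residue class mod $3$ with alternating parity. Also, $\dim\mathcal H_N-\dim\mathcal H_{N-1}$ can equal $-1$ (when $N\equiv1\pmod 6$), as your own table shows.
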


\begin{proof}
Write the weighted homogeneous decompositions
\[
 \varphi=\sum_{r\geq1}\varphi_r,
 \qquad
 fU=\sum_{a\geq0}[fU]_a,
 \qquad
 fV=\sum_{b\geq0}[fV]_b.
\]
For the cusp weights
\[
 \wt(x)=3,
 \qquad
 \wt(y)=2,
\]
the vector field
\[
 X_0=3y^2\partial_x-2x\partial_y
\]
has weighted degree $1$, whereas
\[
 E_0=3x\partial_x+2y\partial_y
\]
has weighted degree $0$. Hence
\[
 X_0(\varphi_r)\in\mathcal H_{r+1},
 \qquad
 E_0(\varphi_r)\in\mathcal H_r.
\]
Expanding
\[
 (1+fU)X_0(\varphi)+fV E_0(\varphi)=-6fV
\]
and taking the weighted homogeneous component of weight $N$ gives
\[
 X_0(\varphi_{N-1})
 +\sum_{a+r+1=N}[fU]_aX_0(\varphi_r)
 +\sum_{b+r=N}[fV]_bE_0(\varphi_r)
 =[-6fV]_N.
\]
Solving for the first term gives precisely \eqref{eq:recurrence}.

Since $X_0(f)=0$, the kernel of $X_0$ on weighted homogeneous
polynomials is generated by powers of $f$. Counting monomials of a
fixed cusp weight and applying rank-nullity gives the cokernel
statement.

The ambiguity in $\varphi$ coming from a kernel term $cf^j$ can be
removed inductively by replacing
\[
 H
 \longmapsto
 H\exp(-cH^j).
\]
This changes $\varphi$ first in weight $6j$ and leaves all previously
determined terms unchanged.
\end{proof}

\section{Degrees five and six}
\label{sec:d56}

We now begin the degree-by-degree exclusion. Recall that the standard
cuspidal foliation corresponds to
\[
 K=L=0,
\]
in which case $\omega=\dd f$ and the projective degree is $2$. Suppose
therefore that the foliation is nonstandard, so that $K$ and $L$ do not both
vanish, and put
\[
 m=\max\{\deg K,\deg L\}.
\]
By Lemma~\ref{lem:J}, the leading homogeneous structure forces
\[
 m\geq3,
\]
and the projective degree is
\[
 d=m+2.
\]
Consequently every nonstandard foliation in our class has $d\geq5$. In
particular, there are no nonstandard foliations of exact degree three or
four. Thus, in order to prove Theorem~\ref{thm:main} for $d\leq7$, it
remains only to exclude
\[
 d=5,\qquad d=6,\qquad d=7,
\]
corresponding respectively to
\[
 m=3,\qquad m=4,\qquad m=5.
\]

\begin{proposition}[Degree five]
\label{prop:d5}
There is no exact degree-five foliation satisfying
\textup{(H1)--(H4)}.
\end{proposition}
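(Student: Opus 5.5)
We work with $m=3$, so that $d=5$. The aim is to show that the requirement $I_0(G,T)=m^2+2m+2=17$ from Proposition~\ref{prop:contact} cannot hold together with the weighted cusp equation \eqref{eq:cohom}.

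By Lemma~\ref{lem:J}, $\deg U\le1$ and $\deg V\le2$. Their top parts are $U_1=cx$ and $V_2=-3cy^2$, where $J_0=c$ is a constant. The constant $c$ is nonzero, since otherwise $m<3$. We can therefore write
\[
U=cx+u_0,\qquad V=-3cy^2+\alpha x+\beta y+\gamma ,
\]
with a finite list of unknown coefficients $u_0,\alpha,\beta,\gamma$. Since $m=3$, the quantity $\deg U$ is exactly $1$, and the cubic and linear terms of $V$ are pinned down in this form.

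The first step is to compute $\widehat U$, $\widehat V$, $P_\infty$, and then $T$ and $G$ from \eqref{eq:TG}, explicitly in the coordinates $(u,w)$, using $v=w-u^3$. One finds
\[
\widehat U=v^{-1}\bigl(c+u_0v\bigr)\cdot v \;=\; c+u_0 v .
\]
The other two are analogous polynomials in $u,v$. The next step is to compute $I_0(G,T)$. The most convenient way is to parametrize a branch, or to use the identity \eqref{eq:fundamentalidentity}:
\[
I_0(G,T)=I_0(vG,T)-I_0(v,T)=I_0(w\widehat V,T)-I_0(v,T).
\]
Along $v=0$ we have $T=w\widehat U=u^3(c+0)$, so $I_0(v,T)=3$ whenever $c\ne0$. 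Moreover $I_0(w,T)=I_0(w,v^{m+1})=3(m+1)=12$. This gives
\[
I_0(G,T)=12+I_0(\widehat V,T)-3 .
\]
Reaching the value $17$ therefore forces $I_0(\widehat V,T)=8$. This is a strong vanishing condition on the low-order Taylor coefficients of $\widehat V$ along the branch $T=0$, which is tangent to $w=0$ with $T\approx cw+v^4$. Expanding $\widehat V$ along a parametrization of $T=0$ turns this into a triangular system of polynomial conditions on $\alpha,\beta,\gamma,u_0$ in terms of $c$.

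The second constraint comes from the cusp. Apply the recurrence \eqref{eq:recurrence} of Lemma~\ref{lem:recurrence} at the weights $N\equiv0,2\pmod 6$, where $X_0$ has a one-dimensional cokernel. At each such weight, the projection of the right-hand side onto the cokernel must vanish. Since $fV$ begins in weight $\ge6$, the first obstructions appear at $N=8$ and $N=12$. At $N=8$, the relevant term is $[-6fV]_8=-6\beta fy$, which is not in the image of $X_0$ because the cokernel of $\mathcal H_7\to\mathcal H_8$ is detected on $fy$-type monomials. This should force $\beta=0$. Similar computations at $N=12,14$, where the quadratic contributions from $[fU]_aX_0(\varphi_r)$ enter, should force relations among $\gamma$, $\alpha$, $u_0$ and finally $c$. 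Combining these with the contact conditions should yield $c=0$, contradicting $m=3$.

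I expect the main obstacle to be the bookkeeping in the cokernel conditions at weights $12$ and $14$, since the $\varphi_r$ determined at earlier steps feed back into the equations. I would first compute explicitly the linear functional on $\mathcal H_N$ that vanishes on $X_0(\mathcal H_{N-1})$, for $N=6,8,12,14$. Then I would solve the contact system first, to cut down the unknowns before imposing these functionals.
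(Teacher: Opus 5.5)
Your route is correct but differs from the paper's, and it closes much faster than you expect.

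The paper does not use the contact formula in degree five. It uses the weighted recurrence at target weights $6$ and $8$ to kill the constant and $y$-coefficients of $V$. It then shows, by a resultant in $x$ and, when the $x$-coefficient of $V$ vanishes, by restricting to $y=0$, that $P=Q=0$ has an affine zero other than the origin. That contradicts \textup{(H2)} directly.

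Your reduction $I_0(G,T)=9+I_0(\widehat V,T)$, and hence the need for $I_0(\widehat V,T)=8$, is right. The very first coefficient of the expansion you propose already decides the matter. Since $T=v^4+w(c+u_0v)$ with $c\neq0$, the branch $T=0$ is smooth with $w=-u^{12}/c+O(u^{13})$. So $v=-u^3+O(u^{12})$ on it, and
\[
 \widehat V=-3cu^2+\alpha v+\beta uv+\gamma v^2=-3cu^2+O(u^3).
\]
Thus $I_0(\widehat V,T)=2$ and $I_0(G,T)=11\neq17$; directly, $G=-3u^{11}+O(u^{12})$ on the branch.

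This is the same mechanism as Lemma~\ref{lem:d7k0} and the case $a\neq0$ of Proposition~\ref{prop:d6}. It gives $I_0(G,T)=3m+2$ whenever $J_{m-3}(1,0)\neq0$, and for $m=3$ this always holds because $J_0=c\neq0$. Consequently the cusp recurrence, including the bookkeeping at weights $12$ and $14$ you expect to be the main obstacle, is unnecessary. As written, however, your proposal never performs this decisive line and ends on ``should'' statements. You should replace its second half by the computation above.

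What each route buys:
\begin{itemize}
\item Yours needs only the Bott normal form, the global Milnor formula and the contact formula at $q$. Degree five then falls under the uniform $k=0$ count, with no further use of the first integral.
\item The paper's argument spends the first integral at weights $6$ and $8$. In exchange it avoids the contact formula and the global Milnor count, and it exhibits the extra affine singular point explicitly.
\end{itemize}
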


\begin{proof}
Here $m=3$, so $J_0=c\neq0$. The degree bounds and \eqref{eq:J} give
\[
 U=cx+\kappa
\]
and, initially,
\[
 V=-3cy^2+\ell x+\alpha y+\beta.
\]
The target-weight-$6$ and target-weight-$8$ equations in
\eqref{eq:recurrence} give
\[
 \beta=0,
 \qquad
 \alpha=0.
\]
Thus
\[
 U=cx+\kappa,
 \qquad
 V=-3cy^2+\ell x.
\]

Using \eqref{eq:UVKL},
\[
 K=cx^2-\ell xy+\kappa x+3cy^3,
\]
\[
 L=\ell x^2+\kappa y^2-2cxy^2.
\]
A direct exact computation gives
\[
 \Res_x(P,Q)
 =
 y^2
 \left(
 27c^4y^6
 +3c^2\ell^2y^5
 +9c^2\ell\kappa y^4
 +\ell^3\kappa y^3
 +\ell^3
 \right).
\]
If $\ell\neq0$, the factor in parentheses has a nonzero complex root,
giving an affine singularity distinct from $p$.

If $\ell=0$, then
\[
 Q(x,0)=0,
\]
whereas
\[
 P(x,0)
 =
 x(1+\kappa x^2+cx^3).
\]
Since $c\neq0$, the polynomial in parentheses has a nonzero complex
root, again contradicting \textup{(H2)}.
\end{proof}

\begin{proposition}[Degree six]
\label{prop:d6}
There is no exact degree-six foliation satisfying
\textup{(H1)--(H4)}.
\end{proposition}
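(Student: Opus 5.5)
The plan follows the degree-five argument, with $m=4$ and $d=6$. By Lemma~\ref{lem:J}, $\deg U\le 2$ and $\deg V\le 3$, with top parts
\[
 U_2=xJ_1,\qquad V_3=-3y^2J_1,\qquad J_1=c_1x+c_2y,\qquad (c_1,c_2)\neq(0,0).
\]
The lower parts $U_{\le1}$ and $V_{\le2}$ are left as general coefficients.

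The first step feeds these into the weighted recurrence \eqref{eq:recurrence} of Lemma~\ref{lem:recurrence}, one target weight at a time. The lowest weight of $fV$ is $6$. At the resonant weights $N\equiv0,2\pmod 6$ the right-hand side must lie in the image of $X_0$. The constant term of $V$ enters at weight $6$, and $V$'s $y$-coefficient enters at weight $8$, exactly as in the degree-five case. The aim is to show the weight-$6$ and weight-$8$ conditions kill $V_0$ and the $y$-coefficient of $V$. The weight-$12$ and weight-$14$ conditions should then give further relations among the coefficients of $V_2$, of $U$, and of $J_1$. These involve $\varphi_r$ solved at earlier steps, so nonlinear terms appear. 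At each stage I would fix the kernel ambiguity $cf^j$ as in the lemma, so that the $\varphi_r$ are determined.

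The second step uses the contact condition. With the reduced $U,V$, form $K,L$ via \eqref{eq:UVKL} and set $P=x+fK$, $Q=y^2+fL$. By \textup{(H2)} there can be no affine singularity other than the origin. The resultant $\Res_x(P,Q)$ must therefore be $y^k$ times a nonzero constant, which gives polynomial equations on the coefficients. Special lines such as $y=0$, or $x=0$ when $c_1=0$, give cheap necessary conditions, as in the degree-five $\ell=0$ case. Independently, Proposition~\ref{prop:contact} requires
\[
 I_0(G,T)=m^2+2m+2=26.
\]
I would compute $T=v^5+w\widehat U$ and $G$ from \eqref{eq:TG}, together with identity \eqref{eq:fundamentalidentity}. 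Then I would estimate $I_0(G,T)$ by parametrizing the branches of $T=0$ through Puiseux/Newton polygon analysis in $(u,w)$. The expected result is that the contact is strictly smaller than $26$ for every surviving parameter choice.

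The split is into cases $c_1\neq0$ and $c_1=0,\ c_2\neq0$, with scaling $(x,y)\mapsto(\lambda^3x,\lambda^2y)$ used to normalize one coefficient in each case. The main obstacle is the nonlinear growth of the resonance conditions at weights $12$ and $14$, combined with the resultant computation. I would try first to show the affine-resultant condition alone forces most of $V_2$ and $U_{\le1}$ to vanish. That would leave a small family where the leading part $J_1$ produces an extra root, either on the $x$-axis or on a line through the origin, just as $1+\kappa x^2+cx^3$ did in degree five. Only if that fails would I use the contact count at $q$ to close the remaining cases.
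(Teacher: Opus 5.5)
Your ingredients are the right ones: the split according to the $x$-coefficient $c_1$ of $J_1$, the low-weight cokernel obstructions of Lemma~\ref{lem:recurrence}, and the required contact $I_0(G,T)=26$ from \eqref{eq:requiredcontact}. These are exactly what the paper uses. But the proposal never reaches a contradiction. The statement that the contact is ``expected'' to be below $26$ is the entire content of the proposition, and you do not prove it.

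Your preferred first route, forcing coefficients to vanish through $\Res_x(P,Q)=\mathrm{const}\cdot y^k$ and special lines, is not shown to close. The degree-five trick on $y=0$ does not transfer: now $P(x,0)=x\bigl(1+x^2U(x,0)\bigr)$ and $Q(x,0)=x^3V(x,0)$, and $V(x,0)$ need not vanish identically. The missing idea is that in each case one of the two germs $T$, $G$ at $q$ is smooth. That reduces $I_0(G,T)$ to a one-variable substitution.

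\textbf{Case $c_1\neq0$.} Here $\widehat U(0,0)=c_1$, so $T_w(0,0)\neq0$ and $T=0$ is a smooth branch. On it $\ord_u w=15$ and $\ord_u v=3$. Also $\widehat V=-3c_1u^2+\cdots$ there, so $\ord_u\widehat V=2$. Identity \eqref{eq:fundamentalidentity} gives $G=w\widehat V/v$ on this branch, hence $I_0(G,T)=15+2-3=14$. No cusp equation is needed in this case.

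\textbf{Case $c_1=0$, $c_2\neq0$.}
\begin{itemize}
\item The weight-$6$ and weight-$8$ obstructions give $v_{00}=v_{01}=0$.
\item At weight $12$ every correction sum in \eqref{eq:recurrence} vanishes for weight reasons, since the first nonzero $\varphi_r$ has $r\geq8$ and $fU$, $fV$ have weight at least $6$, respectively $9$. So your worry about nonlinear growth does not arise.
\item The weight-$12$ condition is therefore the linear one $-6f(v_{20}x^2-3c_2y^3)\in X_0(\mathcal H_{11})$, which is $v_{20}=2c_2$.
\item Hence $G_w(0,0)=P_\infty(0,0)=v_{20}\neq0$ and $G(u,0)=3u^{14}$. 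So $G=0$ is the smooth branch $w=-\tfrac{3}{2c_2}u^{14}+\cdots$.
\item On that branch $T=-\tfrac52u^{15}+\cdots$, so $I_0(G,T)=15$.
\end{itemize}

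Both values contradict $26$, and neither a resultant nor a weight-$14$ condition is required. The weight-$12$ relation is indispensable even on your Newton-polygon route for $T$. On the cusp-adjacent branch $w\sim u^{14}/c_2$ of $T$ one finds $G\sim(3+v_{20}/c_2)u^{14}$. Without $v_{20}=2c_2$ this leading term could cancel, and the contact could not be bounded.
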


\begin{proof}
Here $m=4$ and
\[
 J_1=ax+by.
\]

If $a\neq0$, then
\[
 T_w(0,0)\neq0.
\]
Thus $T=0$ is a smooth branch at the origin, and the direct substitution
in \eqref{eq:fundamentalidentity} gives
\[
 I_0(G,T)=14.
\]
On the other hand, \eqref{eq:requiredcontact} requires
\[
 I_0(G,T)=4^2+2\cdot4+2=26,
\]
a contradiction. Therefore
\[
 a=0,
 \qquad
 b\neq0.
\]

The generic forms allowed by the degree bounds are
\[
 U
 =
 bxy+u_{10}x+u_{01}y+u_{00},
\]
and
\[
\begin{split}
 V={}&-3by^3
 +v_{20}x^2+v_{11}xy+v_{02}y^2\\
 &+v_{10}x+v_{01}y+v_{00}.
\end{split}
\]
The target weights $6,8,12$ in \eqref{eq:recurrence} give
\[
 v_{00}=0,
 \qquad
 v_{01}=0,
 \qquad
 v_{20}=2b.
\]

At $q$ one obtains
\[
 G_w(0,0)=2b\neq0,
\]
and
\[
 G(u,0)=3u^{14}.
\]
Therefore $G=0$ has a unique smooth branch
\[
 w=-\frac{3}{2b}u^{14}+O(u^{15}).
\]
Substitution in $T$ gives
\[
 T
 =
 -\frac52u^{15}+O(u^{16}).
\]
Hence
\[
 I_0(G,T)=15.
\]
The required value is $26$, a contradiction.
\end{proof}

\section{The nonextremal degree-seven sectors}
\label{sec:d7nonext}

Let now $m=5$, so $d=7$. Write
\[
 J_2=ax^2+bxy+cy^2.
\]
Put
\[
 k=\ord_{u=0}J_2(1,u).
\]
Thus
\[
 k=
 \begin{cases}
 0,&a\neq0,\\
 1,&a=0,\ b\neq0,\\
 2,&a=b=0,\ c\neq0.
 \end{cases}
\]
We shall refer to the cases determined by the value of $k$ as the
\emph{$k$-sectors} of the degree-seven analysis. Thus a sector is simply the
stratum of possible leading homogeneous polynomials $J_2$ characterized by
the order of vanishing of $J_2(1,u)$ at $u=0$. The cases $k=0,1$ are called
\emph{nonextremal sectors}, whereas $k=2$ is called the \emph{extremal
sector}. The latter terminology reflects the fact that $k=2$ is the maximal
possible vanishing order of the nonzero quadratic polynomial $J_2(1,u)$ at
$u=0$; in that case
\[
 J_2=cy^2,
 \qquad c\neq0.
\]

The global requirement is
\begin{equation}
 I_0(G,T)=37.
 \label{eq:d7contact}
\end{equation}

\begin{lemma}[The sector $k=0$]
\label{lem:d7k0}
Assume $d=7$, so that $m=5$, and write
\[
 J_2=ax^2+bxy+cy^2.
\]
In the sector $k=0$, equivalently $a=J_2(1,0)\neq0$, one has
\[
 I_0(G,T)=17.
\]
On the other hand, the global contact formula
\eqref{eq:requiredcontact}, with $m=5$, requires
\[
 I_0(G,T)=5^2+2\cdot5+2=37.
\]
Hence the sector $k=0$ is impossible.
\end{lemma}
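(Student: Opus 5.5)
The plan mirrors the case $a\neq0$ of Proposition~\ref{prop:d6}. We show that $T=0$ is a smooth branch at $q$, parametrize it, and read off $I_0(G,T)$ as the order of $G$ along it. To obtain $G$ on that branch cheaply we use the identity \eqref{eq:fundamentalidentity}.

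\emph{Step 1: smoothness of $T=0$.} With $m=5$ we have $\widehat U=v^{3}U(1/v,u/v)=\sum_j v^{3-j}U_j(1,u)$. Its value at $v=0$ is $U_3(1,u)$, and by \eqref{eq:J} this equals $J_2(1,u)$. Hence $\widehat U(0,0)=J_2(1,0)=a\neq0$. Writing $v=w-u^3$ gives
\[
 T=(w-u^3)^6+w\widehat U,
\]
so $T_w(0,0)=a\neq0$ and $T(u,0)=u^{18}$. By the implicit function theorem, $T=0$ is a smooth branch $w=\psi(u)$. Solving $w=-(w-u^3)^6/\widehat U$ shows
\[
 \psi(u)=-\tfrac1a u^{18}+O(u^{19}).
\]
Along the branch, $v=-u^3+O(u^{18})$. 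By the intersection formula for a primitive parametrization, $I_0(G,T)=\ord_{u=0}G(u,\psi(u))$.

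\emph{Step 2: $G$ on the branch.} On $T=0$, identity \eqref{eq:fundamentalidentity} reduces to $vG=w\widehat V$, so $G=\psi\,\widehat V/v$. We need the leading order of $\widehat V$ along the branch. Expand $\widehat V=\sum_{j\le4}v^{4-j}V_j(1,u)$. The $j=4$ term is $V_4(1,u)=-3u^2J_2(1,u)$ by \eqref{eq:J}, which is $-3a u^2+O(u^3)$. For $j\le3$, each term carries a factor $v^{4-j}$ of order at least $3$ in $u$, since $v\sim -u^3$. Therefore $\widehat V(u,\psi(u))=-3au^2+O(u^3)$. Combining,
\[
 G=\frac{(-u^{18}/a)(-3au^2)}{-u^3}\bigl(1+O(u)\bigr)=-3u^{17}+O(u^{18}),
\]
so $I_0(G,T)=17$. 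This is the pattern $3m+2$ already seen for $m=4$.

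\emph{Step 3: contradiction.} Since $17\neq37$, this contradicts \eqref{eq:d7contact}, which Proposition~\ref{prop:contact} forces. Hence the sector $k=0$ is impossible.

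The only delicate point is the bookkeeping in Step~2. We must make sure that the lower homogeneous parts $V_j$, and the corrections in $\psi$ and in $v$, cannot cancel the $u^{17}$ term. The weight argument above, in which every $v$ contributes order $3$ along the branch, should settle this. If it does not, the fallback is to write out $\widehat V$ explicitly modulo $u^3$ using the degree bounds of Lemma~\ref{lem:J}.
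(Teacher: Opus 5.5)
Your proposal is correct and follows the paper's own argument: $T=0$ is a smooth branch because $T_w(0,0)=J_2(1,0)=a\neq0$, and on that branch \eqref{eq:fundamentalidentity} gives $G=w\widehat V/v$, so the orders $18+2-3$ yield $I_0(G,T)=17$. Your explicit leading terms ($\psi\sim-u^{18}/a$, $\widehat V\sim-3au^2$ along the branch because every lower term carries a factor of $v$, hence $G\sim-3u^{17}$) already resolve the bookkeeping you flagged, so the fallback is not needed.
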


\begin{proof}
Since $J_2(1,0)\neq0$,
\[
 T_w(0,0)\neq0.
\]
Thus $T=0$ is smooth. Its solution has
\[
 \ord_u w=18,
 \qquad
 \ord_u v=3.
\]
Moreover
\[
 \ord_u\widehat V(u,0)=2.
\]
Using \eqref{eq:fundamentalidentity} on $T=0$,
\[
 G=\frac{w\widehat V}{v},
\]
and therefore
\[
 I_0(G,T)=18+2-3=17.
\]
\end{proof}

Recall from \eqref{eq:TG} that, in the local coordinates
\[
 u=Y/X,
 \qquad
 v=Z/X,
 \qquad
 w=v+u^3
\]
near $q$, the function $T$ is
\[
 T(u,w)=v^{m+1}+w\widehat U,
 \qquad
 v=w-u^3.
\]
In the present degree-seven case $m=5$, so
\[
 T=v^6+w\widehat U.
\]
We regard $T$ as a polynomial in the local variables $(u,w)$. If
\[
 T(u,w)=\sum_{i,j\geq0}c_{ij}u^iw^j,
\]
its Newton polygon is the lower convex boundary of
\[
 \operatorname{Conv}\left(
 \bigcup_{c_{ij}\neq0}
 \bigl((i,j)+\mathbb R_{\geq0}^2\bigr)
 \right).
\]
Its compact edges will be called the Newton faces of $T$. We use only this
elementary form of the Newton-polygon construction below.

\begin{lemma}[The sector $k=1$]
\label{lem:d7k1}
Assume $k=1$, equivalently $a=0$ and $b\neq0$, so that
\[
 J_2(1,u)=\lambda u+O(u^2),
 \qquad
 \lambda\neq0.
\]
Then
\[
 I_0(G,T)\leq18.
\]
Since the global contact formula requires $I_0(G,T)=37$, the sector $k=1$
is impossible.
\end{lemma}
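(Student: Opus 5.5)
The plan is to avoid a direct factorization of $G$ and instead move the whole computation onto the simpler curve $T=0$, using the fundamental identity \eqref{eq:fundamentalidentity}, $vG=3u^2T+w\widehat V$. Modulo $T$ this says that $vG$ and $w\widehat V$ generate the same ideal, so $(vG,T)=(w\widehat V,T)$. By additivity of intersection multiplicities,
\[
 I_0(v,T)+I_0(G,T)=I_0(w,T)+I_0(\widehat V,T),
\]
provided no common components occur; this is part of what the computation below checks. Hence the lemma reduces to computing three explicit numbers.

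\emph{Step 1: structure of $T$.} With $m=5$ we have $\widehat U=U_3(1,u)+vU_2(1,u)+v^2U_1(1,u)+v^3U_0$, and by \eqref{eq:J} $U_3(1,u)=J_2(1,u)=\lambda u+O(u^2)$. Thus $\widehat U(0,0)=0$ and $\partial_u\widehat U(0,0)=\lambda\neq0$, so $\widehat U=0$ is a smooth germ transverse to $u=0$. Then:
\begin{itemize}
\item on $w=0$ one has $T(u,0)=v^6=u^{18}$, so $I_0(w,T)=18$;
\item on $v=0$ one has $w=u^3$ and $T=u^3\widehat U(u,u^3)$, of order $4$, so $I_0(v,T)=4$.
\end{itemize}
Therefore $I_0(G,T)=14+I_0(\widehat V,T)$, and it suffices to show $I_0(\widehat V,T)\le4$.

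\emph{Step 2: a linear term in $\widehat V$.} Write $\widehat V=V_4(1,u)+vV_3(1,u)+\dots+v^4V_0$, with $V_4(1,u)=-3u^2J_2(1,u)=-3\lambda u^3+O(u^4)$. Then $\widehat V(0,0)=0$, and the coefficient of $w$ in $\widehat V$ at the origin is $V_3(1,0)=v_{30}$. I would run the weighted recurrence of Lemma~\ref{lem:recurrence} at the low target weights, exactly as in Proposition~\ref{prop:d6} where one obtained $v_{20}=2b$. The expectation is that this yields $v_{00}=v_{01}=0$ together with a relation forcing $v_{30}$ to be a nonzero multiple of $b$, that is, of $\lambda$.

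\emph{Step 3: the intersection number.} Granting $v_{30}\neq0$, the germ $\widehat V=0$ is a smooth branch $w=\tfrac{3\lambda}{v_{30}}u^3+O(u^4)$. Substituting it into $T=v^6+w\widehat U$, the term $w\widehat U$ contributes $\lambda u\cdot cu^3=O(u^4)$ with a generically nonzero coefficient, while $v^6$ has order at least $18$. This gives $\ord_u T\le4$, hence $I_0(G,T)\le18<37$.

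The main obstacle is Step 2, namely extracting $v_{30}\neq0$, or at least a nondegeneracy of the right kind, from the recurrence \eqref{eq:recurrence} at the relevant weight (around weight $9$ or $12$). It is also possible that the leading coefficient in Step 3 cancels, namely when $\partial_u^2\widehat V$ contributions make $3\lambda/v_{30}$ hit a value killing the $u^4$ term. The fallback is to use instead the Newton polygon of $T$, whose vertices $(18,0)$, $(1,1)$, $(0,j)$ show that $T$ splits into the smooth branch $w\sim-u^{17}/\lambda$ and a branch transverse to it. I would then bound $\ord\widehat V$ on each branch separately, using $\ord_u\widehat V(u,0)=3$ on the first branch. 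On that first branch, $I_0(w,\cdot)$ counts as $17$ and $I_0(v,\cdot)$ as $3$, so the first branch already contributes $17+3-3=17$; the second branch then needs a contribution of at most $1$, which again comes from the linear $w$-term in $\widehat V$.
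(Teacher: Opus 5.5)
\paragraph{Verdict.} The proposal has a genuine gap: it never uses \textup{(H4)}, and without it the needed nondegeneracy of $v_{30}$ cannot be obtained.

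\paragraph{What is correct, and where the bound fails.} Your reduction is sound. It is the same mechanism the paper uses in Lemmas~\ref{lem:d7k0} and~\ref{lem:d7bezout}: from $vG=3u^2T+w\widehat V$ you correctly get $I_0(w,T)=18$ and $I_0(v,T)=4$, hence $I_0(G,T)=14+I_0(\widehat V,T)$. The gap is in the nondegeneracy you need next. Since $v=-u^3$ on $w=0$, the term $vV_3(1,u)$ contributes $-v_{30}u^3$, so
\[
 \widehat V(u,0)=-(3\lambda+v_{30})u^3+O(u^4).
\]
Likewise, on $(\widehat V=0)$ it is $v$, not $w$, that equals $\tfrac{3\lambda}{v_{30}}u^3+\cdots$. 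Thus $w=(1+3\lambda/v_{30})u^3+\cdots$, and the $u^4$ coefficient of $T$ there is $\lambda(1+3\lambda/v_{30})$. Splitting $T$ into the branch $w\sim-u^{17}/\lambda$ and the complementary branch $u\sim-(A/\lambda)w^{j}$ gives:
\begin{itemize}
\item $I_0(\widehat V,T)=3+1=4$ exactly when $v_{30}\notin\{0,-3\lambda\}$;
\item if $v_{30}=-3\lambda$, the first branch contributes at least $4$ (your fallback assumes this away by asserting $\ord_u\widehat V(u,0)=3$);
\item if $v_{30}=0$, the second branch contributes at least $2$, since $\widehat V\in\mathfrak m^2$.
\end{itemize}
Nothing in your argument controls these two cases. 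B\'ezout only gives $I_0(\widehat V,T)\le24$, which does not exclude the required value $23$.

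\paragraph{Why the recurrence cannot help, and what is missing.} The weighted recurrence cannot produce $v_{30}\ne0$. The coefficient $v_{30}$ enters $[-6fV]_N$ only at $N=15$, which is not an obstruction weight. More structurally, $X_0$ anticommutes with $x\mapsto-x$, while the cokernel classes $f^j$ and $yf^j$ are even. Hence monomials odd in $x$ lie in the image of $X_0$ and never enter a linear obstruction; this covers both $x^3$ and the $b$-part $-3bxy^3$ of $V_4$. The obstructions at weights $6,8,12,14$ give only
\[
v_{00}=v_{01}=0,\qquad 3v_{20}+2v_{03}=0,\qquad v_{21}=4c,
\]
where $J_2=bxy+cy^2$. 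The degree-six relation $v_{20}=2b$ has no analogue here, because there $V_3=-3by^3$ is even in $x$.

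The missing ingredient is hypothesis \textup{(H4)}. In the paper, on the cusp-adjacent face one has $T=\rho^{18}(1+\lambda z+\cdots)$ and $G=\rho^{17}(-3+\tau z+\cdots)$ with $\tau=v_{30}$. So $v_{30}=-3\lambda$ is exactly the common root $P=-3F$, and $v_{30}=0$ is exactly $\tau=0$ on the complementary face. In both cases the toric chart produces a singular point with one zero and one nonzero eigenvalue, i.e.\ a saddle-node, which \textup{(H4)} forbids. You need these two exclusions, or some other argument ruling out $v_{30}\in\{0,-3\lambda\}$, before your bound $I_0(G,T)\le18$ holds.
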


\begin{proof}
In the sector $k=1$, one compact face of the Newton polygon of $T$
corresponds to the balance between the term $v^6$ and the leading term
determined by $J_2(1,u)$. It is detected by the toric scaling
\[
 u=\rho,
 \qquad
 w=\rho^{17}z,
\]
and we call it the \emph{cusp-adjacent face}, since its associated branch
has the high contact dictated by the cusp at infinity. The remaining
compact face on the lower Newton boundary will be called the
\emph{complementary face}.

We first consider the cusp-adjacent branch. Put
\[
 r_0=3m+3-k=17
\]
and use the birational toric chart
\[
 u=\rho,
 \qquad
 w=\rho^{17}z.
\]
Since
\[
 v=\rho^3(-1+O(\rho)),
\]
one has
\[
 T
 =
 \rho^{18}\bigl(F(z)+O(\rho)\bigr),
\]
\[
 G
 =
 \rho^{17}\bigl(P(z)+O(\rho)\bigr),
\]
with
\[
 F(z)=1+\lambda z,
 \qquad
 P(z)=-3+\alpha z
\]
for some $\alpha\in\CC$.

If $F$ and $P$ had a common root $z_0\in\CC^*$, then
\[
 P=-3F.
\]
Pulling back \eqref{eq:Theta} and saturating gives at the face level
\[
 8zF(z)\,\dd\rho+\rho F(z)\,\dd z.
\]
Since $F$ has a simple nonzero root, the transformed singularity at
$(0,z_0)$ has one zero and one nonzero eigenvalue. It is therefore a
saddle-node, contradicting \textup{(H4)}.

Consequently there is no cancellation on this branch, and its
contribution to $I_0(G,T)$ is at most
\[
 17.
\]

We now turn to the complementary branch. The support of $\widehat U$
contains the term $\lambda u$. All remaining terms give, in the
complementary region, pure $w$-orders $1,2,3$, or the model order $5$.
Let
\[
 j\in\{1,2,3,5\}
\]
be determined by the first pure $w$-term occurring on the lower
boundary. The corresponding primitive toric chart is
\[
 u=\rho^jz,
 \qquad
 w=\rho.
\]
The face of $T$ is
\[
 T
 =
 \rho^{j+1}
 \bigl(A+\lambda z+O(\rho)\bigr),
 \qquad
 A\neq0.
\]
Thus the complementary branch has a unique simple toric root
\[
 z_0=-A/\lambda.
\]

Let $\tau$ be the coefficient of $v$ in $\widehat V$, equivalently the
constant term of $P_\infty$. If $\tau\neq0$, then on the complementary
branch
\[
 G=\tau\rho+O(\rho^2),
\]
so this branch contributes exactly one to $I_0(G,T)$.

Suppose instead that $\tau=0$. Then
\[
 G=O(\rho^2).
\]
After pulling back \eqref{eq:Theta} and dividing by the maximal common
power $\rho^{j+1}$, the linear part at $(0,z_0)$ is
\[
 \bigl(
  -2\lambda(z-z_0)+c_0\rho
 \bigr)\,\dd\rho
\]
for some $c_0\in\CC$, while the coefficient of $\dd z$ has order at
least two. The associated linear vector field has eigenvalues
\[
 0,\qquad 2\lambda.
\]
Hence this point is a saddle-node, contradicting \textup{(H4)}.

Therefore $\tau\neq0$, and the complementary contribution is one.
Altogether
\[
 I_0(G,T)\leq17+1=18<37.
\]
\end{proof}

It remains only the extremal case
\[
 k=2,
 \qquad
 J_2=cy^2,
 \qquad
 c\neq0.
\]

\section{The extremal degree-seven sector}
\label{sec:d7ext}

We now treat the only degree-seven sector not excluded in the preceding
section. Recall that
\[
 J_2=ax^2+bxy+cy^2,
 \qquad
 k=\ord_{u=0}J_2(1,u),
\]
and that the nonextremal sectors $k=0,1$ have already been ruled out.
Thus only the maximal possible value $k=2$ remains. Equivalently,
\[
 a=b=0,
 \qquad
 c\neq0,
 \qquad
 J_2=cy^2.
\]
This is what we mean by the \emph{extremal degree-seven sector}.

A weighted dilation
\[
 (x,y)\longmapsto(\lambda^3x,\lambda^2y)
\]
preserves the cusp up to a nonzero scalar. In the normal form
\eqref{eq:UV}, the coefficient of $xy^2$ in $U$ is multiplied by
$\lambda^{13}$. Hence, over $\CC$, we may normalize
\[
 c=1.
\]

Thus
\[
 U
 =
 xy^2
 +u_{20}x^2+u_{11}xy+u_{02}y^2
 +u_{10}x+u_{01}y+u_{00},
\]
and
\[
\begin{split}
V={}&-3y^4
 +v_{30}x^3+v_{21}x^2y+v_{12}xy^2+v_{03}y^3\\
&+v_{20}x^2+v_{11}xy+v_{02}y^2
 +v_{10}x+v_{01}y+v_{00}.
\end{split}
\]

The first weighted obstruction equations give
\begin{equation}
 v_{00}=v_{01}=0,
 \qquad
 2v_{03}+3v_{20}=0,
 \qquad
 v_{21}=4.
 \label{eq:d7early}
\end{equation}

For convenience write
\[
 r=v_{03},
 \qquad
 a=v_{12},
 \qquad
 b=v_{11},
 \qquad
 s=v_{02},
 \qquad
 t=v_{10}.
\]
Then
\begin{equation}
\begin{split}
\widehat U={}&
 u^2
 +u_{20}v+u_{11}uv+u_{02}u^2v\\
&+u_{10}v^2+u_{01}uv^2+u_{00}v^3,
\end{split}
\label{eq:Uhatd7}
\end{equation}
and
\begin{equation}
\begin{split}
\widehat V={}&
 -3u^4
 +v(v_{30}+4u+au^2+ru^3)\\
&+v^2\left(-\frac23r+bu+su^2\right)
 +tv^3.
\end{split}
\label{eq:Vhatd7}
\end{equation}

\begin{lemma}[Sharp extremal contact]
\label{lem:d7bezout}
In the extremal degree-seven sector just described, namely
\[
 k=2,
 \qquad
 J_2=cy^2,
 \qquad
 c\neq0,
\]
one has
\[
 I_0(\widehat V,T)=24.
\]
In particular, the projective curves
\[
 (\widehat V=0)
 \quad\text{and}\quad
 (T=0)
\]
of degrees at most $4$ and $6$ attain the global B\'ezout bound at the
origin.
\end{lemma}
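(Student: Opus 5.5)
The plan is to derive the equality $I_0(\widehat V,T)=24$ from the required global contact \eqref{eq:d7contact}, using the fundamental identity \eqref{eq:fundamentalidentity}. The identity reads
\[
 vG=3u^2T+w\widehat V .
\]
Modulo $T$ it gives $vG\equiv w\widehat V$. Hence the ideals $(vG,T)$ and $(w\widehat V,T)$ coincide in $\OO_{\CC^2,0}$, and so
\[
 I_0(vG,T)=I_0(w\widehat V,T).
\]
Applying additivity of intersection multiplicity to both sides then yields
\[
 I_0(v,T)+I_0(G,T)=I_0(w,T)+I_0(\widehat V,T).
\]

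First I must check that every term is finite. If $G$ and $T$ shared a component, then $\Theta$ would have a curve of zeros, contradicting saturation and \textup{(H2)}, exactly as in the proof of Proposition~\ref{prop:contact}; so $I_0(G,T)<\infty$. The line $v=0$ and the curve $w=0$ are not components of $T$, as the computations below show. Consequently $\widehat V$ and $T$ have no common component either, since such a component would be a common component of $vG$ and $T$, and hence of $G$ and $T$.

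The two auxiliary multiplicities are computed by parametrizing smooth branches.
\begin{itemize}
\item On $w=0$ we have $v=-u^3$, so $T=v^6=u^{18}$ and $I_0(w,T)=18$. This is the value $3m+3$ already used in Proposition~\ref{prop:contact}.
\item On $v=0$ we have $w=u^3$. Here $T=w\widehat U(u,0)$, and \eqref{eq:Uhatd7} gives $\widehat U(u,0)=u^2$ in the extremal sector $J_2=y^2$. So $T=u^5$ and $I_0(v,T)=5$.
\end{itemize}
This second computation is exactly where $k=2$ enters: in general $I_0(v,T)=3+k$.

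Substituting these values gives $I_0(\widehat V,T)=I_0(G,T)-13$. The contact requirement \eqref{eq:d7contact}, $I_0(G,T)=37$, then forces $I_0(\widehat V,T)=24$.

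For the B\'ezout remark, I would read $\widehat V$ and $T$ as polynomials in $(u,v)$ via $w=v+u^3$. The point is to check that their natural projective closures have degrees at most $4$ and $6$ respectively.
\begin{itemize}
\item For $\widehat V$, each monomial in \eqref{eq:Vhatd7} has total degree at most $4$ in $(u,v)$.
\item For $T=v^6+(v+u^3)\widehat U$, every monomial $u^iv^j$ of $\widehat U$ has $i+3j\le 3$ after the weight bookkeeping. So it is cleaner to use the weighted form of the statement, or equivalently the original affine degrees $\deg V\le m-1=4$ and $\deg\bigl((1+fU)\text{-part}\bigr)\le6$ in the coordinates at $q$.
\end{itemize}
Since $\widehat V$ and $T$ have no common component, B\'ezout gives $I_0\le 4\cdot 6=24$, so the value $24$ is extremal.

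I expect the main obstacle to be this degree bookkeeping. One has to identify the correct projective (possibly weighted) models of $(\widehat V=0)$ and $(T=0)$ so that the product of degrees is exactly $24$ and the origin is the only intersection point counted. The equality $I_0=24$ itself is the easy part, since it follows directly from \eqref{eq:fundamentalidentity}. The B\'ezout sharpness then serves as the lever for the subsequent exact computation, because it forces $(\widehat V=0)$ and $(T=0)$ to meet nowhere else, including at infinity.
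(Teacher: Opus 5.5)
Your argument is correct and is essentially the paper's proof. You reduce \eqref{eq:fundamentalidentity} modulo $T$, apply additivity with $I_0(v,T)=5$ and $I_0(w,T)=18$, and insert $I_0(G,T)=37$; your explicit finiteness checks are a welcome addition that the paper leaves implicit.

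The degree bookkeeping you flag as the main obstacle is routine, and no weighted model is needed. Your claim $i+3j\le3$ is false (for instance for $u_{00}v^3$). The correct fact is that each monomial $x^iy^j$ of $U$ becomes $u^jv^{3-i-j}$, so $\widehat U$ has ordinary total degree $\le3$ in the affine chart $(u,v)$ of $\PP^2$. Hence $T=v^6+(v+u^3)\widehat U$ has degree $\le6$ there, and likewise $\widehat V$ has degree $\le4$.
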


\begin{proof}
The identity \eqref{eq:fundamentalidentity} gives modulo $(T)$
\[
 vG=w\widehat V.
\]
In the extremal case $k=2$, equivalently $J_2=cy^2$, one has
\[
 I_0(v,T)=5,
 \qquad
 I_0(w,T)=18.
\]
Therefore
\[
 5+I_0(G,T)
 =
 18+I_0(\widehat V,T).
\]
Since $I_0(G,T)=37$,
\[
 I_0(\widehat V,T)=24.
\]
On the other hand,
\[
 \deg\widehat V\leq4,
 \qquad
 \deg T\leq6.
\]
Thus $24=4\cdot6$ is the B\'ezout maximum. Consequently no additional
intersection of their projective closures can occur away from the
origin.
\end{proof}

Define
\[
 \mathcal R(u)
 =
 \Res_v(T,\widehat V)
 =
 \sum_{\nu\geq0}K_\nu u^\nu.
\]
By Lemma~\ref{lem:d7bezout},
\[
 \ord_{u=0}\mathcal R=24.
\]
Hence
\[
 K_\nu=0
 \qquad
 (\nu<24).
\]

\begin{proposition}[Exact extremal obstruction in degree seven]
\label{prop:d7ext}
There is no foliation in the extremal degree-seven sector
$k=2$, equivalently $J_2=cy^2$ with $c\neq0$, satisfying
\textup{(H1)--(H4)}.
\end{proposition}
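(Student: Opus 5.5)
The plan is to turn the sharp contact condition of Lemma~\ref{lem:d7bezout} into a finite system of polynomial equations in the remaining coefficients, and then show that this system, together with the remaining weighted obstruction equations from Lemma~\ref{lem:recurrence}, has no solution compatible with \textup{(H2)}. The unknowns are
\[
 u_{20},u_{11},u_{02},u_{10},u_{01},u_{00},\ v_{30},\ a,\ b,\ r,\ s,\ t,
\]
with $c=1$ already normalized and \eqref{eq:d7early} already imposed.

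\textbf{Step 1: the resultant conditions.} Substitute \eqref{eq:Uhatd7} and \eqref{eq:Vhatd7} into $T=v^6+w\widehat U$ with $w=v+u^3$, and compute $\mathcal R(u)=\Res_v(T,\widehat V)$ exactly over $\mathbb Q[u_{ij},v_{30},a,b,r,s,t]$. The condition $\ord_{u=0}\mathcal R=24$ gives the equations $K_\nu=0$ for $\nu<24$. I would organize these by the weighted grading inherited from the cusp weights. In the $(u,v)$ chart one can assign $\wt(u)=1$ and $\wt(v)=3$ along the contact, and each coefficient carries a weight. Then the low $K_\nu$ are triangular in the coefficients of lowest weight, so the first few vanishing conditions should successively force $v_{30}$, $u_{20}$, and similar low-order terms to zero or to explicit values.

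\textbf{Step 2: the first-integral conditions.} Add the weighted recurrence \eqref{eq:recurrence} at the next obstruction weights $N\equiv0,2\pmod 6$ beyond those used in \eqref{eq:d7early}, namely $N=12,14,18,20$. These are the cokernel conditions of $X_0$. Each gives one linear-in-top-unknown equation.

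\textbf{Step 3: the Gr\"obner basis.} Compute a Gr\"obner basis over $\mathbb Q$ of the ideal generated by the conditions from Steps 1 and 2, saturated with respect to the nondegeneracy already known from (H2). The expectation is that the ideal is the unit ideal, or that its variety forces some polynomial among $P,Q$ to acquire an extra affine root. In the latter case I would finish in the style of Proposition~\ref{prop:d5}: compute $\Res_x(P,Q)$ on the surviving component, or restrict to $y=0$, and exhibit a nonzero root, which contradicts \textup{(H2)}. An additional check is useful: Lemma~\ref{lem:d7bezout} says the projective closures of $\widehat V=0$ and $T=0$ meet only at the origin. So any solution for which the leading forms of $\widehat V$ and $T$ share a root at infinity is excluded, which eliminates further branches.

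\textbf{Main obstacle.} I expect Step 1 to be the main difficulty. The resultant has high degree and depends on twelve parameters, and requiring vanishing up to order $23$ produces a large, nonlinear system. To control it, I would first try a Newton–Puiseux parametrization instead of the full resultant. Since $T=w\widehat U+v^6$ with $\widehat U=u^2+\dots$, the curve $T=0$ near the origin has branches $w\approx -v^6/u^2$. I would parametrize these branches and impose that $\widehat V$ vanish along them to total order $24$. This reduces the problem to matching Puiseux coefficients order by order, which is nearly triangular. The exact computation over $\mathbb Q$ is then needed only for the few residual nonlinear coefficients at the top orders.
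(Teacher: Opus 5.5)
Your proposal has a genuine gap. Its decisive step is the expectation in Step~3 that the ideal generated by the contact equations $K_\nu=0$ ($\nu<24$) and the cusp obstructions is the unit ideal, and nothing in the proposal supports that expectation.

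Note first that the fallback you describe cannot occur. In the Bott normal form one has $\mu_p=2$ automatically. The identity $vG=3u^2T+w\widehat V$, with $I_0(v,T)=5$ and $I_0(w,T)=18$, shows that $I_0(\widehat V,T)=24$ is equivalent to $I_0(G,T)=37$, i.e.\ to $\mu_q=55$. By the global count $\sum\mu=57$, this is precisely the statement $\Sing(\F)=\{p,q\}$. So every point of your variety already satisfies \textup{(H2)}, and no extra affine root of $P,Q$ can appear. The argument therefore stands or falls with the unit-ideal claim alone.

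The missing idea is the use of \textup{(H4)} at infinitely near points of $q$; you use it only through the normal form. In the paper, $K_5$ and the B\'ezout equality force $v_{30}=0$, and then only the branch $r=v_{03}=0$ is closed by contact and cusp equations alone. In the branch $r\neq0$ the elimination closes only because the absence of saddle-nodes is imposed at further points.

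\begin{itemize}
\item At the first blow-up $v=uz$, the exceptional point $z_*=6/r$ is common to $\widetilde V$ and $\widetilde T$. There \textup{(H4)} forces $\widetilde V'(0,z_*)=\widetilde T'(0,z_*)$. Together with $K_8=0$ this gives $u_{11}=r/3$ and $u_{10}=-r^2/12$.
\item At the second blow-up, in both cases $s=0$ and $s\neq0$, the exceptional quadratic $H$ and the transverse linear form $L$ share a root. \textup{(H4)} forces this to be a double root of $H$, which yields $b=0$, respectively $t=0$.
\end{itemize}

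These are conditions on the linear part of the transformed foliation, not on the intersection number. There is therefore no a priori reason for them to follow from your equations. You must either add them, with the corresponding case splits, or produce an actual certificate that the ideal without them is trivial.

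A smaller point concerns the cusp obstructions. The early equations fixing $v_{00},v_{01},v_{20},v_{21}$ already use target weights $6,8,12,14$. What remains is $18,20,24$, not $12,14,18,20$. The weight-$24$ obstruction you omit is used in the paper for $r=0$, $s\neq0$, where together with $K_{10},K_{11}$ it gives $u_{11}=-\tfrac{9}{64}t^3$. It is used again in the branch $r\neq0$, $s\neq0$.
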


\begin{proof}
We use only exact polynomial identities over $\mathbb Q$.

The first relevant resultant coefficient is
\[
 K_5
 =
 v_{30}^2
 \Res_v
 \left(
 v^4+u_{00}v^2+u_{10}v+u_{20},
 \,
 tv^2-\frac23rv+v_{30}
 \right).
\]
If $v_{30}\neq0$, vanishing of the residual resultant would give an
intersection of $(T=0)$ and $(\widehat V=0)$ on $u=0$ with $v\neq0$,
contradicting the equality case of B\'ezout in
Lemma~\ref{lem:d7bezout}. Hence
\[
 v_{30}=0.
\]

We split according to $r=v_{03}$.

\medskip
\noindent
\textbf{Case 1: $r=0$.}

Suppose first that $t=0$. Then $\widehat V=uW$ with $\deg W\leq3$.
Since
\[
 I_0(\widehat V,T)=24
\]
and
\[
 I_0(W,T)\leq18,
\]
one must have
\[
 I_0(u,T)=6.
\]
This forces
\[
 u_{20}=u_{10}=u_{00}=0.
\]
But the next resultant coefficient is
\[
 K_{11}=7168,
\]
a contradiction.

Assume therefore $t\neq0$. The coefficients $K_7,K_8,K_9$ are,
successively,
\[
 K_7=28u_{20}^2t^4,
\]
\[
 K_8=112u_{10}^2t^3,
\]
\[
 K_9=28t^2(4u_{00}-u_{11}t)^2.
\]
Thus
\[
 u_{20}=u_{10}=0,
 \qquad
 u_{00}=\frac14u_{11}t.
\]

The cusp equations of target weights $18,20,24$ now give
\[
 s(2u_{01}+t)=0
\]
together with two further linear relations.

If $s=0$, they give
\[
 u_{11}=0,
 \qquad
 u_{02}=-\frac b2.
\]
Then
\[
 K_{10}=7t^3(t-4u_{01})^2,
\]
so
\[
 u_{01}=\frac t4.
\]
The following coefficients become
\[
 K_{11}
 =
 \frac74(3t^2b-64)^2,
\]
and, after
\[
 b=\frac{64}{3t^2},
\]
\[
 K_{12}
 =
 \frac{7}{144t^5}(3at^5+8192)^2.
\]
Thus
\[
 a=-\frac{8192}{3t^5}.
\]
But then
\[
 K_{13}
 =
 \frac{187904819200}{9t^{10}}
 \neq0,
\]
a contradiction.

It remains to take $s\neq0$. The weight-$18$ equation gives
\[
 u_{01}=-\frac t2.
\]
The equations $K_{10}=0$, the weight-$24$ obstruction, and $K_{11}=0$
give
\[
 u_{11}=-\frac9{64}t^3,
 \qquad
 b=\frac{64}{3t^2},
\]
and
\[
 a
 =
 -\frac{4096s+27t^6}{9st^5}.
\]
Put
\[
 x=\frac{s}{t^6},
 \qquad
 q=t^{13}.
\]
The next resultant condition is
\begin{equation}
 81qx^2+327680x-432=0.
 \label{eq:Pd7}
\end{equation}
The following condition is
\begin{equation}
\begin{split}
0={}&
88086528q^2x^5
-177147q^2x^4
-280850595840qx^4\\
&-4825350144qx^3
+1889568qx^2
-439804651110400x^3\\
&+13480828600320x^2
+95169282048x
-5038848.
\end{split}
\label{eq:Qd7}
\end{equation}
Their exact resultant with respect to $x$ is
\[
 -1001889278026273498595328\,
 q^2(9801q-8388608000)^2.
\]
Since $q\neq0$,
\[
 q=\frac{8388608000}{9801}.
\]
However, the next resultant coefficient, reduced modulo
\eqref{eq:Pd7}--\eqref{eq:Qd7}, is
\[
 \frac{4546773}{5000000}
 (12849111q-43478155264000),
\]
which at the preceding value of $q$ equals
\[
 -\frac{18460290639200256}{625}
 \neq0.
\]
This eliminates the case $r=0$.

\medskip
\noindent
\textbf{Case 2: $r\neq0$.}

After $v_{30}=0$, the next coefficients factor as
\[
 K_7
 =
 \frac{28}{81}u_{20}\,R_7,
\]
and, after $u_{20}=0$,
\[
 K_8
 =
 \frac{28}{243}
 (r^2+6ru_{11}+36u_{10})R_8,
\]
where $R_7,R_8$ are the residual resultants of the corresponding
strict transforms on $u=0$. They cannot vanish because
Lemma~\ref{lem:d7bezout} concentrates the complete B\'ezout intersection at
the origin. Hence
\[
 u_{20}=0,
\]
and
\[
 u_{10}
 =
 -\frac{r(r+6u_{11})}{36}.
 \label{eq:u10first}
\]

Consider the first ordinary blow-up in the chart
\[
 v=uz.
\]
At the relevant nonzero exceptional point
\[
 z_*=\frac6r,
\]
the two exceptional functions are
\[
 \widetilde V(0,z)
 =
 z\left(4-\frac23rz\right),
\]
and
\[
 \widetilde T(0,z)
 =
 z(1+u_{11}z+u_{10}z^2).
\]
Equation \eqref{eq:u10first} is exactly the condition
\[
 \widetilde T(0,z_*)=0.
\]
If
\[
 \widetilde V'(0,z_*)
 \neq
 \widetilde T'(0,z_*),
\]
the transformed foliation has one zero and one nonzero eigenvalue at
this point, hence a saddle-node. Hypothesis \textup{(H4)} therefore
forces equality of these derivatives. Using
\eqref{eq:u10first}, this gives
\[
 -2+\frac{6u_{11}}r=0.
\]
Consequently
\begin{equation}
 u_{11}=\frac r3,
 \qquad
 u_{10}=-\frac{r^2}{12}.
 \label{eq:saddlenode-first}
\end{equation}

We now split according to $s=v_{02}$.

\medskip
\noindent
\emph{Subcase 2a: $s=0$.}

The target-weight-$18$ equation gives
\[
 t=0.
\]
The next cusp equations give
\[
 u_{01}=\frac{rb}{8},
 \qquad
 u_{02}=-\frac b2,
\]
and the contact equations determine
\[
 u_{00}
 =
 \frac{r^3a}{216}+\frac{r^2b}{48}.
\]
The next relevant contact equation is
\[
 8r^3-9r^2a^2-90rab-216b^2=0.
 \label{eq:secondcontact}
\]

At the next blow-up the exceptional quadratic and transverse linear
polynomials may be chosen as
\[
 H(y)
 =
 \frac{18}{r^2}
 \left(
 r^2y^2-3ary-15by+2r
 \right),
\]
and
\[
 L(y)
 =
 12y-\frac{18(ar+6b)}{r^2}.
\]
Equation \eqref{eq:secondcontact} is, up to a nonzero scalar,
\[
 \Res_y(H,L)=0.
\]
Thus $H$ and $L$ have a common exceptional point.

If the corresponding root of $H$ were simple, the transformed
foliation would have a saddle-node. Hence \textup{(H4)} forces this
root to be double.  Here, for a univariate polynomial
\[
 P(t)=a_n\prod_{j=1}^n(t-\alpha_j),
\]
we use the standard discriminant
\[
 \operatorname{Disc}(P)
 =
 a_n^{2n-2}\prod_{i<j}(\alpha_i-\alpha_j)^2,
\]
so that $\operatorname{Disc}(P)=0$ precisely when $P$ has a multiple
root.  Consequently,
\[
 \operatorname{Disc}(H)=0.
\]
Explicitly,
\[
 \Res(H,L)
 =
 648r^2
 \bigl(
 8r^3-9a^2r^2-90abr-216b^2
 \bigr),
\]
whereas
\[
 \operatorname{Disc}(H)
 =
 324
 \bigl(
 8r^3-9a^2r^2-90abr-225b^2
 \bigr).
\]
The two equations imply
\[
 b=0.
\]
Then \eqref{eq:secondcontact} gives
\[
 8r=9a^2.
\]
The next two contact coefficients become
\[
 8019a^{13}+134217728=0,
\]
and
\[
 494343a^{13}+13958643712=0.
\]
These two linear equations in $a^{13}$ are incompatible. Hence
Subcase~2a is impossible.

\medskip
\noindent
\emph{Subcase 2b: $s\neq0$.}

The target weights $18,20,24$, together with
\eqref{eq:saddlenode-first}, give
\[
 u_{01}
 =
 \frac{t(r^2-4s)}{8s},
 \qquad
 u_{02}=-\frac b2,
\]
and
\begin{equation}
 (r^2+12s)(sb-rt)=0.
 \label{eq:splitfinal}
\end{equation}

First suppose
\[
 s=-\frac{r^2}{12}.
\]
The next contact equation and the residual B\'ezout condition give
\[
 ar^2+6rb+36t=0.
\]
The following coefficient gives $t\neq0$ and
\[
 b
 =
 -\frac{32r+15t^3}{rt^2}.
\]
Put
\[
 z=\frac{r}{t^3}.
\]
The next contact coefficient becomes
\begin{equation}
 2048z^2+864z-27=0.
 \label{eq:zquad}
\end{equation}
The subsequent two coefficients give, after eliminating the remaining
parameter,
\begin{equation}
\begin{split}
0={}&
228841226240z^5
+227741270016z^4
+71332724736z^3\\
&+6137579520z^2
-256830912z
+550395.
\end{split}
\label{eq:zpoly}
\end{equation}
The two polynomials in \eqref{eq:zquad} and \eqref{eq:zpoly} are
coprime in $\mathbb Q[z]$. Hence this branch is empty.

It remains to consider
\[
 sb=rt,
 \qquad
 r^2+12s\neq0.
\]
At the second blow-up, the next contact equation is the resultant of
an exceptional quadratic $H$ and a transverse linear polynomial $L$.
At their common root, \textup{(H4)} again forces the root of $H$ to be
double. A direct calculation gives
\[
 L-\frac13H'
 =
 -\frac{18t(r^2+12s)}{r^3s}.
\]
Since $r,s$ and $r^2+12s$ are nonzero, this forces
\[
 t=0.
\]
From $sb=rt$ we obtain
\[
 b=0.
\]
The remaining contact equation gives
\[
 s=\frac{9ra^2-8r^2}{96}.
\]
The next two coefficients are
\[
 11a^3r^5+331776=0,
\]
and
\[
 5095a^3r^5+896ar^6+310542336=0.
\]
Their exact lexicographic Gr\"obner basis contains
\[
 128r-743a^2.
\]
The following contact coefficient reduces modulo these two equations
to
\[
 -\frac{6641574912}{11}a^2.
\]
Thus $a=0$, and then $r=0$, contradicting the standing assumption
$r\neq0$.

All branches have been eliminated. Therefore the extremal degree-seven
sector is impossible.
\end{proof}

\begin{remark}[Exact arithmetic and reproducibility]
\label{rem:certificate}
All resultant expansions, polynomial reductions, and Gr\"obner-basis
computations used in Proposition~\ref{prop:d7ext} are carried out over
$\mathbb Q$.  For reproducibility, the exact computations are provided
in the supplementary file
\[
 \texttt{degree7\_exact\_verification.py}.
\]
Running the script reproduces the polynomial identities and reductions
used in the proof and terminates with the message
\[
 \texttt{Degree-seven exact verification passed.}
\]
The supplementary README records the software requirements and the
command used for verification.  No floating-point computation enters
the certificate.
\end{remark}

\section{Proof of the main theorem}
\label{sec:proof-main}

\begin{proof}[Proof of Theorem~\ref{thm:main}]
By the projective classification of irreducible cuspidal cubics, and
because the cusp and the smooth flex are intrinsic, a projective
automorphism sends the marked triple $(C,p,q)$ to
$(C_0,p_0,q_0)$.  We therefore make the normalization fixed at the end
of the introduction.  In these coordinates, Proposition~\ref{prop:UV}
shows that every foliation satisfying the hypotheses has the form
\[
 \omega=(1+fU)\,\dd f+fV\eta.
\]

If
\[
 K=L=0,
\]
then
\[
 \omega=\dd f
\]
and the foliation is the standard degree-two cusp foliation.

Assume $K,L$ do not both vanish. By Lemma~\ref{lem:J},
\[
 d=m+2,
 \qquad
 m\geq3.
\]
Thus the only possible nonstandard degrees with $d\leq7$ are
\[
 d=5,6,7.
\]

Exact degree five is excluded by Proposition~\ref{prop:d5}, and exact degree six
by Proposition~\ref{prop:d6}.

For degree seven, Lemmas~\ref{lem:d7k0} and~\ref{lem:d7k1} eliminate the two
nonextremal possibilities
\[
 k=0,1.
\]
The only remaining possibility is the extremal sector
\[
 J_2=cy^2,
 \qquad c\neq0,
\]
which is excluded by Proposition~\ref{prop:d7ext}.

Therefore no nonstandard foliation of degree at most seven satisfies
\textup{(H1)--(H4)}. Hence, in the normalized coordinates,
\[
 d=2,
 \qquad
 \omega=\dd(x^2+y^3)
\]
up to multiplication by a nonzero constant.

Returning by the inverse projective automorphism proves that $\F$ is
projectively equivalent to $\F_{\mathrm{cusp}}$.  If $F_C=0$ is a
homogeneous cubic equation for the original curve and $\ell_q=0$ is
its tangent line at the smooth flex, the pull-back of
\[
 \frac{X^2Z+Y^3}{Z^3}
\]
is a nonzero constant multiple of
\[
 \frac{F_C}{\ell_q^3}.
\]
This is therefore a rational first integral of $\F$.
\end{proof}

\begin{remark}[A finite-jet weakening of \textup{(H3)}]
\label{rem:finite-cusp-obstructions}
Inspection of the preceding proof shows that the full analytic existence
of a first integral at $p$ is not required.  The first local input can be
replaced simply by
\[
 \mu_p(\F)=2.
\]
Indeed, write $\omega=A\,\dd f+B\eta$ as in
Lemma~\ref{lem:decomp}, and set $a=A(0)$, $b=B(0)$.  The linear parts of
the two coefficients of $\omega$ are
\[
 2ax-2by,
 \qquad
 3bx.
\]
If $b\neq0$, these two linear forms are independent, since
\[
 \det
 \begin{pmatrix}
 2a&-2b\\
 3b&0
 \end{pmatrix}
 =6b^2\neq0,
\]
and hence $\mu_p(\F)=1$.  If $a=b=0$, both coefficients of $\omega$
belong to the square of the maximal ideal, so their local intersection
multiplicity is at least $4$.  Thus $\mu_p(\F)=2$ forces
$A(0)\neq0$ and $B(0)=0$, and on the normalization
$\gamma(t)=(t^3,-t^2)$ one has
\[
 tA(\gamma(t))+B(\gamma(t))=A(0)t+O(t^2).
\]
This is exactly the simple-zero input used in
Proposition~\ref{prop:KL}.

After the polynomial Bott normal form
\[
 \omega=(1+fU)\,\dd f+fV\eta
\]
is obtained, the remaining use of the first integral is only the
finite-order solvability of the weighted cohomological equation
\[
 (1+fU)X_0(\varphi)+fV E_0(\varphi)=-6fV.
\]
The recurrence of Lemma~\ref{lem:recurrence} shows that, for degrees at
most seven, the only target weights at which a nontrivial solvability
condition is used are
\[
 6,\ 8,\ 12,\ 14,\ 18,\ 20,\ 24.
\]
Thus the argument proves the same conclusion if \textup{(H3)} is
replaced, in the normalized cusp coordinates, by the numerical
condition $\mu_p(\F)=2$ together with solvability of the recurrence of
Lemma~\ref{lem:recurrence} through target weight $24$; equivalently, the
corresponding cokernel obstructions must vanish at the target weights
listed above.  We keep the holomorphic first-integral formulation in
Theorem~\ref{thm:main} because it is intrinsic and geometrically more
transparent.
\end{remark}

\section{Degree-two deformation rigidity on $\PP^2$}
\label{sec:planar-deformation}

We next study the deformation-theoretic counterpart of the preceding
global rigidity theorem. In degree two the conclusion is stronger in
a different direction: once the cuspidal cubic is preserved and the
singular set remains concentrated in two points, no local
first-integrability or saddle-node hypothesis is needed.

Throughout this section we keep the notation
\[
 F(X,Y,Z)=X^2Z+Y^3,
 \qquad
 C=(F=0)\subset\PP^2,
\]
\[
 p=[0:0:1],
 \qquad
 q=[1:0:0],
\]
and, in the affine chart $Z=1$,
\[
 f=x^2+y^3,
 \qquad
 \eta=3x\,\dd y-2y\,\dd x.
\]
The standard cuspidal foliation is
\[
 \F_{\mathrm{cusp}}:\quad \dd f=0,
\]
and a homogeneous defining form is
\[
 \Omega_{\mathrm{cusp}}=Z\,\dd F-3F\,\dd Z.
\]

We first determine all degree-two foliations near
$\F_{\mathrm{cusp}}$ which leave the fixed cubic $C$ invariant.

\begin{lemma}[Normal form for degree-two cusp-preserving foliations]
\label{lem:degree-two-cusp-normal-form-final}
Let \(\F\) be a degree-two foliation on \(\PP^2\),
sufficiently close to \(\F_{\mathrm{cusp}}\), and assume that
\(C\) is invariant by \(\F\). Then, after choosing a nonzero projective defining form and
restricting it to the affine chart \(Z=1\), its affine representative
can be written uniquely in the form
\begin{equation}
\omega
=
\lambda\,\dd f+(a+by+cx)\eta+e\beta,
\label{eq:final-degree-two-normal-form}
\end{equation}
where
\[
\beta=(x^2+3y^3)\,dx-3xy^2\,dy
\]
and
\[
\lambda,a,b,c,e\in\CC.
\]
Moreover, \(\lambda\neq0\) for \(\F\) sufficiently close to
\(\F_{\mathrm{cusp}}\), and \(q\) is singular for
\(\F\) if and only if \(e=0\).
\end{lemma}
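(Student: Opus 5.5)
We will apply Lemma~\ref{lem:decomp} and then add the degree bound coming from projective degree two. Let $\omega=a(x,y)\,\dd x+b(x,y)\,\dd y$ be the affine restriction of a homogeneous defining form of $\F$. Projective degree two means $\deg a,\deg b\le 3$, with top homogeneous part of degree three satisfying the radial relation $xa_3+yb_3=0$. Hence $(a_3,b_3)=g_2\cdot(-y,x)$ for some quadratic form $g_2$. Invariance of $C$ gives $\omega=A\,\dd f+B\eta$ with $A,B$ unique. We then do a degree count. Since $\dd f$ has coefficients of degrees $1,2$ and $\eta$ has degree-one coefficients, the degree bound and the independence argument of Lemma~\ref{lem:decomp} (wedging with $\eta$, respectively with $\dd f$) will give $\deg A\le 1$ and $\deg B\le 2$. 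Concretely, we wedge: $\omega\wedge\eta=6Af\,\dd x\wedge\dd y$, and the left side has degree at most $4$, so $\deg A\le1$. Similarly $\dd f\wedge\omega=6Bf\,\dd x\wedge \dd y$ has degree at most $5$, so $\deg B\le 2$.

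Next we write $A=\lambda+\alpha_1x+\alpha_2y$ and $B=a+by+cx+\beta_{20}x^2+\beta_{11}xy+\beta_{02}y^2$. We compute the degree-three part of $A\,\dd f+B\eta$ and impose the radial condition $x\,a_3+y\,b_3=0$. The cubic part is
\[
(\alpha_1x+\alpha_2y)(2x\,\dd x)+(\lambda\text{-free})\ldots+(\beta\text{-terms})\,\eta+\text{(degree-3 part of }A\cdot 3y^2\dd y).
\]
This is a finite linear system in the six unknowns $\alpha_1,\alpha_2,\beta_{ij}$. Its contraction with the radial field $R=x\partial_x+y\partial_y$ must vanish in degree four; note that $\eta(R)=xy$ and $\dd f(R)=2x^2+3y^3$. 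We expect it to cut out a one-dimensional space. Identifying the resulting combination of $\alpha$- and $\beta$-terms with the single form $\beta=(x^2+3y^3)\,\dd x-3xy^2\,\dd y$ is the step to check carefully. For instance, $\beta=x\,\dd f-y^2\eta$ up to rearrangement, which is the combination $A\ni x$, $B\ni -y^2$ modulo terms already absorbed. One verifies directly that $\beta(R)$ has no degree-four term beyond cancellation. This linear-algebra identification is the main obstacle: one must show that the radial constraint leaves exactly the span of $\beta$ among the nonconstant-$A$ and quadratic-$B$ terms, and that $\lambda\,\dd f$ and $(a+by+cx)\eta$ impose no constraint, since their degrees are at most $2$ and the radial condition only concerns the top degree. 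Uniqueness then follows from the uniqueness in Lemma~\ref{lem:decomp}.

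For $\lambda\ne0$ we argue by continuity. $\F_{\mathrm{cusp}}$ corresponds to $\lambda=1$ and $a=b=c=e=0$, and the map from the projectivized defining form to the coefficients is linear and injective, so nearby foliations have $\lambda\neq0$. For the last claim we homogenize: $\dd f$ and $\eta$ have no cubic part, while $\beta$ contributes the top part $e\,(3y^3\dd x-3xy^2\dd y)=3ey^2(y\,\dd x-x\,\dd y)$, whose factor $y^2$ is the $g_2$ above. On the line at infinity the foliation has singularities exactly where the top-degree quadratic $g_2$ vanishes, together with the zeros of the radial contraction of the degree-two part. We evaluate at $q=[1:0:0]$ in the chart $u=Y/X$, $v=Z/X$. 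If $e=0$, the form has affine degree at most $2$, so infinity is invariant with $q$ singular. This can be checked by writing $\Omega$ in the chart and seeing that both coefficients vanish at $(0,0)$, using that the quadratic part of $\lambda\,\dd f+(\cdots)\eta$ radially contracts to $3\lambda y^3+\ldots$. If $e\neq0$, we check that the $\dd v$-coefficient at $(u,v)=(0,0)$ equals a nonzero multiple of $e$, coming from the $x^2\,\dd x$ term of $\beta$, so $q$ is regular. This last chart computation is routine.
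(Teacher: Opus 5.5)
Your strategy is sound and differs from the paper's. However, the step that carries the content of the lemma, namely that the radial condition leaves exactly the span of $\beta$, is left as an expectation, and the identification you offer for it is wrong. You write $\beta=x\,\dd f-y^2\eta$ ``up to rearrangement''. In fact $x\,\dd f-y^2\eta=2f\,\dd x$ (the identity used in the proof of Proposition~\ref{prop:KL}), and this form is not even admissible: its cubic part $2y^3\,\dd x$ has radial contraction $2xy^3\neq0$. Nor does the identification hold ``modulo terms already absorbed''. Its difference from $2\beta$ is $2y^2\eta$, which has cubic coefficients and so is not in the span of $\dd f,\eta,y\eta,x\eta$. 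Your displayed cubic part also contains $(\alpha_1x+\alpha_2y)\,2x\,\dd x$, which is quadratic. The cubic part of $A\,\dd f+B\eta$ is $3y^2A_1\,\dd y+B_2\eta$, where $A_1,B_2$ are the linear and quadratic parts of $A,B$.

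The repair is a short computation with the tools you already set up. Since $\dd f(R)=2x^2+3y^3$ and $\eta(R)=xy$, the degree-four part of the radial contraction is
\[
 3y^3A_1+xyB_2=y\bigl(xB_2+3y^2A_1\bigr).
\]
Its vanishing gives $\alpha_2=\beta_{20}=\beta_{11}=0$ and $\beta_{02}=-3\alpha_1$. Hence the extra contribution is
\[
 \alpha_1\bigl(x\,\dd f-3y^2\eta\bigr)=2\alpha_1\beta ,
\]
so $e=2\alpha_1$, $A=\lambda+\tfrac e2x$ and $B=a+by+cx-\tfrac{3e}{2}y^2$. Equivalently, compare top-degree parts in your two wedge identities using $(a_3,b_3)=g_2\,(-y,x)$. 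This gives $6y^3A_1=-xyg_2$ and $2B_2=g_2$, which force $g_2=-3ey^2$.

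With this in place, uniqueness via Lemma~\ref{lem:decomp} and your argument for $\lambda\neq0$ go through. In your discussion at infinity, ``together with'' must mean \emph{common} zeros of $g_2$ and of the radial contraction of the quadratic part. Since $g_2=-3ey^2$ vanishes at $q$ for every $e$, the test is that contraction at $(1,0)$:
\[
 \bigl(ex^3+3\lambda y^3+xy(by+cx)\bigr)(1,0)=e .
\]
This agrees with your chart computation.

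For comparison, the paper writes general $P,Q$ of degree at most three and imposes the radial condition and the invariance identity $3y^2P-2xQ=fH$ simultaneously, recording the resulting five-dimensional solution space. It then lists the homogeneous generators and observes that only $\Omega_\beta$ is nonzero at $q$. Your route lets Lemma~\ref{lem:decomp} absorb the invariance condition and supply uniqueness. That reduces everything to a four-coefficient radial check and explains $\beta$ conceptually as $\tfrac12(x\,\dd f-3y^2\eta)$. The paper's explicit homogeneous forms, in turn, make the singularity test at $q$ a matter of inspection.
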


\begin{proof}
Write
\[
\omega=P(x,y)\,dx+Q(x,y)\,dy.
\]
Since \(\F\) has degree two, \(P\) and \(Q\) have degree at
most three and, if \(P_3,Q_3\) denote their homogeneous parts of
degree three, the projective condition is
\begin{equation}
xP_3+yQ_3=0.
\label{eq:projective-condition-affine}
\end{equation}
The invariance of \(C=(f=0)\) is equivalent to
\[
f\mid \omega\wedge \dd f.
\]
Since
\[
\dd f=2x\,dx+3y^2\,dy,
\]
this is equivalent to the existence of a polynomial \(H\) of degree
at most two such that
\begin{equation}
3y^2P-2xQ=(x^2+y^3)H.
\label{eq:cusp-invariance-coefficients}
\end{equation}

We now write general polynomials \(P,Q\) of degree at most three and
compare coefficients in
\eqref{eq:projective-condition-affine} and
\eqref{eq:cusp-invariance-coefficients}. The resulting linear system
has dimension five, and its solutions are precisely
\[
\begin{aligned}
P={}&
2\lambda x-2ay-2by^2-2cxy
+e(x^2+3y^3),\\
Q={}&
3\lambda y^2+3ax+3bxy+3cx^2
-3exy^2.
\end{aligned}
\]
This is exactly \eqref{eq:final-degree-two-normal-form}.

For completeness, the corresponding five homogeneous projective
forms may be chosen as
\[
\Omega_{\mathrm{cusp}},
\quad
\Omega_{\eta},
\quad
\Omega_{y\eta},
\quad
\Omega_{x\eta},
\quad
\Omega_{\beta},
\]
where
\[
\begin{aligned}
\Omega_{\eta}
&=-2YZ^2\,dX+3XZ^2\,dY-XYZ\,dZ,\\
\Omega_{y\eta}
&=-2Y^2Z\,dX+3XYZ\,dY-XY^2\,dZ,\\
\Omega_{x\eta}
&=-2XYZ\,dX+3X^2Z\,dY-X^2Y\,dZ,\\
\Omega_{\beta}
&=(X^2Z+3Y^3)\,dX-3XY^2\,dY-X^3\,dZ.
\end{aligned}
\]
The first four forms vanish at \(q=[1:0:0]\), whereas
\(\Omega_{\beta}(q)\neq0\). Hence \(q\) is singular if and only if
\(e=0\). Finally, \(\lambda=1\) at the standard foliation, so
\(\lambda\neq0\) in a sufficiently small neighborhood of it.
\end{proof}

We first record the fixed-singular-set form of rigidity.

\begin{proposition}[Rigidity with fixed singular set]
\label{prop:fixed-singular-set-final}
Let \(\F\) be a degree-two foliation sufficiently close to
\(\F_{\mathrm{cusp}}\). Assume that \((\F,C)\) is a cuspidal pair; that
is, \(C\) is invariant by \(\F\) and
\[
\Sing(\F)=\{p,q\}.
\]
Then
\[
\F=\F_{\mathrm{cusp}}.
\]
\end{proposition}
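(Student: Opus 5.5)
Using Lemma~\ref{lem:degree-two-cusp-normal-form-final}, we write the affine representative of $\F$ as
\[
\omega=\lambda\,\dd f+(a+by+cx)\eta+e\beta,\qquad \lambda\neq0.
\]
Since $q\in\Sing(\F)$, the same lemma gives $e=0$. Dividing by $\lambda$, we may take $\lambda=1$, so that
\[
\omega=\dd f+(a+by+cx)\eta.
\]
The goal is then to show $a=b=c=0$. To do this we use only the condition that $\Sing(\F)$ contains no point other than $p$ and $q$. We look separately at the affine chart $Z=1$ and at the line at infinity.

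For affine singularities, the coefficients are
\[
P=2x-2y(a+by+cx),\qquad Q=3y^2+3x(a+by+cx).
\]
First, the case $a\neq0$. The linear part at the origin is then nondegenerate, so $p$ has Milnor number $1$. A degree-two foliation satisfies $\sum\mu=7$ by \eqref{eq:global-milnor-formula}. Hence, if $\Sing(\F)=\{p,q\}$, we would need $\mu_q=6$. I expect that a direct local computation at $q$ shows $\mu_q$ is far smaller. The cleaner route is to find the extra singularities explicitly. Restricting to the invariant curve $C$ via $\gamma(t)=(t^3,-t^2)$ and using \eqref{eq:dfteta}, we get
\[
\omega|_C=\bigl(t+a-bt^2+ct^3\bigr)\eta.
\]
Zeros of $t+a-bt^2+ct^3$ with $t\neq0$ are singular points of $\F$ on $C$ other than $p$. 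Note that $\eta$ itself vanishes along $C$ only at $t=0$.

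So (H2) forces this polynomial to have no roots other than $t=0$, up to what happens at infinity. The polynomial vanishes at $t=0$ exactly when $a=0$. If $a\neq0$, then $t=0$ is not a root. If moreover $(b,c)\neq(0,0)$, the polynomial has a nonzero root, giving a new affine singularity on $C$. If $a\neq0$ and $b=c=0$, then $t=-a$ is a root. Either way (H2) fails, so $a=0$. The polynomial becomes $t(1-bt+ct^2)$, and any nonzero root of $1-bt+ct^2$ would again be an extra singularity. If $c\neq0$ there are two roots (with multiplicity), so $c=0$. Then $1-bt$ forces $b=0$ if $b\neq0$ gives the root $t=1/b$. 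Hence $a=b=c=0$ and $\omega=\dd f$.

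One subtlety must be checked: singular points detected on the normalization must be genuine points of $\Sing(\F)$. This holds because $C$ is invariant, and at a regular point of $C$ the restriction $\omega|_C$ vanishes only where $\omega$ itself vanishes. The points $\gamma(t)$ with $t\neq0$ are smooth affine points of $C$, so they are distinct from $p$ and from $q$, which lies at infinity. I expect the main obstacle to be nothing deeper than verifying this restriction formula carefully. In particular, $\eta$ is nonvanishing on $\gamma(t)$ for $t\neq0$, and $\eta|_C$ is a nonzero multiple of $\dd t$ there, which is confirmed by $\gamma^*\eta=-6t^4\,\dd t+4t^4\,\dd t\neq0$. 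Notably, this argument never needs smallness beyond $\lambda\neq0$.
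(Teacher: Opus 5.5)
Your argument is correct and essentially the paper's. The paper solves the affine system $x-yh=0$, $y^2+xh=0$ (with $h=a+by+cx$) directly. It finds every extra affine singularity at $(-u^3,-u^2)\in C$ with $cu^3+bu^2+u-a=0$, which is your polynomial $ct^3-bt^2+t+a$ under $t=-u$. Restricting to $C$ via \eqref{eq:dfteta}, as you do, is also what the paper itself does in the proof of Theorem~\ref{thm:two-singularity-deformation-final}.

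One thing must be fixed: your ``subtlety'' paragraph is wrong.

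\begin{itemize}
\item The pull-back is actually $\gamma^*\eta=3t^3(-2t\,\dd t)-2(-t^2)(3t^2\,\dd t)=0$, not $-2t^4\,\dd t$. The curve $C$ is $\eta$-invariant, and the pull-back of $\omega$ to an invariant curve vanishes identically. So in the pull-back sense your identity reads $0=0$ and detects nothing.
\item The correct and sufficient justification is that \eqref{eq:dfteta} holds as an identity of covectors in $T^*_{\gamma(t)}\CC^2$. Explicitly, $\omega(\gamma(t))=\bigl(t+h(\gamma(t))\bigr)(2t^2\,\dd x+3t^3\,\dd y)$, so a root $t_0$ of the coefficient gives $\omega(\gamma(t_0))=0$ outright.
\item Invariance of $C$ plays no role in this step. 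The nonvanishing of $\eta(\gamma(t))$ is needed only for the converse, which you never use.
\end{itemize}

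Finally, the aside predicting $\mu_q<6$ is unproved and unused, so delete it.
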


\begin{proof}
By Lemma~\ref{lem:degree-two-cusp-normal-form-final}, the condition
that \(q\) be singular gives \(e=0\). After multiplication by a
nonzero constant, we may assume \(\lambda=1\). Hence
\[
\omega=\dd f+h\eta,
\qquad
h=a+by+cx.
\]
Its affine coefficients are
\[
2(x-yh),
\qquad
3(y^2+xh),
\]
so the affine singular points satisfy
\begin{equation}
x-yh=0,
\qquad
y^2+xh=0.
\label{eq:final-affine-singular-equations}
\end{equation}
If \(y=0\), the first equation gives \(x=0\). If \(y\neq0\), then
\(x=yh\), and the second equation becomes
\[
y(y+h^2)=0.
\]
Thus every additional affine singularity satisfies
\[
y=-h^2,
\qquad
x=-h^3.
\]
Putting \(u=h\), we obtain
\[
(x,y)=(-u^3,-u^2)
\]
and the identity \(u=a+by+cx\) becomes
\begin{equation}
cu^3+bu^2+u-a=0.
\label{eq:final-extra-singularity-polynomial}
\end{equation}

If \(a\neq0\), the polynomial in
\eqref{eq:final-extra-singularity-polynomial} has a nonzero complex
root. If \(a=0\) and \((b,c)\neq(0,0)\), then
\[
cu^3+bu^2+u=u(cu^2+bu+1),
\]
and the second factor has a nonzero complex root. In either case an
additional affine singular point is produced. Therefore
\[
a=b=c=0,
\]
and \(\omega=\dd f\).
\end{proof}

We can now allow the second singular point to move.

\stepcounter{theorem}
\begin{maintheorem}[Two-singularity deformation rigidity]
\label{thm:two-singularity-deformation-final}
Let
\[
(\F_t)_{t\in(\CC,0)}
\]
be a holomorphic deformation of
\(\F_{\mathrm{cusp}}\) through degree-two foliations on
\(\PP^2\). Assume that, for every sufficiently small \(t\),

\begin{enumerate}
\item the fixed cuspidal cubic \(C\) is invariant by \(\F_t\);
\item \(\Sing(\F_t)\) consists of exactly two
points.
\end{enumerate}
Then
\[
\F_t=\F_{\mathrm{cusp}}
\]
for every sufficiently small \(t\).
\end{maintheorem}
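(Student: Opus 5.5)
The plan is to reduce everything to the five-parameter normal form of Lemma~\ref{lem:degree-two-cusp-normal-form-final} and then count singular points. For small $t$ the foliation $\F_t$ is degree two, close to $\F_{\mathrm{cusp}}$, and leaves $C$ invariant. By that lemma it is defined, after normalizing $\lambda(t)=1$, by
\[
\omega_t=\dd f+(a(t)+b(t)y+c(t)x)\eta+e(t)\beta,
\]
with $a,b,c,e$ holomorphic and vanishing at $t=0$. The coefficients $P,Q$ displayed in the proof of that lemma both vanish at the origin, so $p\in\Sing(\F_t)$ for every $t$. The total Milnor number is $d^2+d+1=7$, and for $\F_{\mathrm{cusp}}$ it splits as $\mu_p=2$ and $\mu_q=5$. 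By upper semicontinuity and conservation of the Milnor number, for small $t$ the singular points of $\F_t$ lie in small neighbourhoods of $p$ and $q$, with Milnor numbers summing to $2$ near $p$ and $5$ near $q$. In particular at least one singular point always lies near $q$.

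\emph{Step 1: $a\equiv0$.} The linear parts of the coefficients of $\omega_t$ at the origin are $2x-2ay$ and $3ax$. If $a(t)\neq0$, these are independent and $\mu_p(\F_t)=1$. The remaining unit of Milnor number near $p$ must then be carried by a second singular point near $p$, distinct from $p$. Together with at least one singular point near $q$, this gives three singular points, which contradicts hypothesis~2. Hence $a\equiv0$. Then $\mu_p(\F_t)=2$, since the coefficients are $2x+O(2)$ and $3y^2+3bxy+3cx^2-3exy^2$.

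\emph{Step 2: the case $e(t)=0$.} By Lemma~\ref{lem:degree-two-cusp-normal-form-final}, $q$ is then singular, so $\Sing(\F_t)=\{p,q\}$ and $(\F_t,C)$ is a cuspidal pair. Proposition~\ref{prop:fixed-singular-set-final} then gives $\F_t=\F_{\mathrm{cusp}}$. Since the zero set of $e$ is either all of $(\CC,0)$ or isolated, it remains to rule out $e(t)\neq0$ for $t\neq0$. In that case $q$ is a regular point, so the whole Milnor number $5$ near $q$ must be concentrated at a single point $r_t\neq q$, $r_t\to q$, with $\mu_{r_t}(\F_t)=5$.

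\emph{Step 3 (main obstacle): a single point of multiplicity $5$ near $q$ forces $e=0$.} Work in the chart $X=1$ with coordinates $(u,v)=(Y/X,Z/X)$ and write the two coefficients $A_t,B_t$ of the dehomogenized form $\Omega_t$. These are explicit polynomials in $(u,v)$ depending linearly on $(b,c,e)$. First I would use invariance of $C$: on the normalization of $C$ near $q$, the restriction of $\omega_t$ is a coefficient times $\eta$, and that coefficient is an explicit polynomial in the parameter of $C$. Its zeros near infinity are the singular points of $\F_t$ on $C$, so this decides whether $r_t$ lies on $C$. Next I would eliminate one variable via $\Res_v(A_t,B_t)$. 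All its roots near $u=0$ must collapse to a single $u$-value, and the corresponding fibre must be a single point. Comparing the Newton polygon in $(u,t)$ of this resultant with the requirement of one root of multiplicity $5$, I expect leading-order relations of the form $e\cdot(\text{unit})=0$ to appear. For instance, the linear term $3u^2\cdot e$-type contribution at infinity should split the point of multiplicity $5$ into at least two points unless $e=0$.

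This last elimination is where I expect the real work. The natural way to organize it is to expand in $e$: show that the $e$-linear perturbation already separates the coalesced singularity at $q$ into a point on $C$ and a point off $C$, or into points of Milnor numbers $(1,4)$. This is the "escape at infinity" mechanism mentioned in the introduction, just as Step 1 captures escape near the affine cusp.
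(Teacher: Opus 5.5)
\textbf{Verdict: there is a genuine gap.} Your Steps~1 and~2 are sound and match the paper: $a$ is killed by the count $\mu_p=1$ plus conservation near $p$ and $q$, and the case $e=0$ ends with Proposition~\ref{prop:fixed-singular-set-final}. The gap is Step~3. That is where the content of the theorem lies, and you only state it as an expectation.

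\textbf{The missing idea.} Carry your own remark about restricting to $C$ through to the end. With $\lambda=1$, $a=0$ and the parametrization $x=u^3$, $y=-u^2$, both coefficients of $\omega_t$ restrict to nonzero multiples of
\[
H(u)=eu^3-cu^2+bu-1 .
\]
If $e\neq0$, this cubic has three finite nonzero roots, counted with multiplicity. Each root gives a singular point on the smooth part of $C$, necessarily near $q$. The map $u\mapsto(u^3,-u^2)$ is injective on $u\neq0$, and only one singular point besides $p$ is allowed. Hence $H=e(u-r)^3$, which forces the one-parameter family
\[
b=3s,\qquad c=3s^2,\qquad e=s^3,\qquad s=1/r .
\]
In the weighted coordinates $X=s^3x$, $Y=s^2y$ the singular system no longer depends on $s$. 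Besides $(0,0)$ and the point $(1,-1)$ on $C$, it has the two solutions $(-2,1\pm i\sqrt3)$, which lie off $C$ since $X^2+Y^3=-4$ there. That gives at least four singular points, a contradiction. This argument needs no Milnor-number analysis at $r_t$ at all, only a count of distinct points.

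\textbf{Why your proposed mechanism fails.} You suggest expanding to first order in $e$ and showing the $e$-linear term already splits the point of multiplicity $5$. This does not work as stated, because $b$ and $c$ are free small parameters, not zero. The only configuration that survives the count on $C$ has
\[
|e|=|s|^3\ll|c|=3|s|^2\ll|b|=3|s| .
\]
So the $e$-linear contribution is dominated by the $b$- and $c$-terms and cannot decide anything by itself. Likewise, a Newton polygon of $\Res_v(A_t,B_t)$ in $(u,t)$ is not determined until you know how the orders of $b(t),c(t),e(t)$ are related. Without that, you face an open-ended case analysis. The triple-root condition on $H$ supplies exactly that relation, and it also supplies the quasi-homogeneous scaling that reduces the final step to one explicit computation.
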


\begin{proof}
For each \(t\), Lemma~\ref{lem:degree-two-cusp-normal-form-final}
gives
\[
\omega_t
=
\lambda\,\dd f+(a+by+cx)\eta+e\beta,
\]
where the coefficients depend holomorphically on \(t\) and
\(\lambda\neq0\).

The point \(p=(0,0)\) is singular for every \(t\). Its linear part is
\[
(2\lambda x-2ay)\,dx+3ax\,dy.
\]
If \(a\neq0\), the corresponding coefficient matrix has determinant
\(6a^2\), so \(p\) is nondegenerate and
\[
\mu(\F_t,p)=1.
\]
For the central foliation,
\[
\mu(\F_{\mathrm{cusp}},p)
=
\dim_{\CC}
\frac{\CC\{x,y\}}{(x,y^2)}
=2.
\]
Hence conservation of the local intersection number implies that,
when \(a\neq0\), another singular point splits off in a sufficiently
small neighborhood of \(p\).

We also record the Milnor number at \(q\). In the chart \(X=1\), with
\[
u=Y/X,\qquad v=Z/X,
\]
the standard foliation is represented by
\[
3vu^2\,du-(2v+3u^3)\,dv.
\]
Therefore
\[
\mu(\F_{\mathrm{cusp}},q)
=
\dim_{\CC}
\frac{\CC\{u,v\}}
     {(vu^2,\,2v+3u^3)}
=
\dim_{\CC}\frac{\CC\{u\}}{(u^5)}
=5.
\]
Again by conservation of number, every sufficiently small deformation
has at least one singular point in a small neighborhood of \(q\).
Consequently \(a\neq0\) would give at least three distinct singular
points globally, contrary to the hypothesis. Hence
\[
a=0.
\]

Parametrize the smooth affine part of the cusp by
\[
x=u^3,\qquad y=-u^2.
\]
Substitution in the two coefficients of \(\omega_t\) shows that,
away from \(u=0\), a point of \(C\) is singular precisely when
\begin{equation}
H(u)=eu^3-cu^2+bu-\lambda=0.
\label{eq:final-H-polynomial}
\end{equation}

Suppose first that \(e=0\). Then \(q\) remains singular. If
\((b,c)\neq(0,0)\), the nonconstant polynomial
\[
-cu^2+bu-\lambda
\]
has a finite complex root. Since \(\lambda\neq0\), this root is
nonzero and yields a singular point on the smooth affine part of
\(C\), in addition to \(p\) and \(q\). Hence \(b=c=0\), and
Proposition~\ref{prop:fixed-singular-set-final} gives
\[
\F_t=\F_{\mathrm{cusp}}.
\]

It remains to exclude \(e\neq0\). In this case \(q\) is regular.
Since \(H\) is a cubic polynomial, all its roots are finite; moreover
\(H(0)=-\lambda\neq0\). Each root therefore gives a singular point on
the smooth part of \(C\). The parametrization
\[
u\longmapsto(u^3,-u^2)
\]
is injective for \(u\neq0\). Since there is only one singular point
besides \(p\), all three roots of \(H\), counted with multiplicity,
must coincide. Hence
\[
H(u)=e(u-r)^3
\]
for some \(r\neq0\). Comparing coefficients gives
\[
c=3er,\qquad
b=3er^2,\qquad
\lambda=er^3.
\]
Normalize \(\lambda=1\), and put \(s=1/r\). Then
\[
e=s^3,\qquad c=3s^2,\qquad b=3s.
\]

Introduce the rescaled affine coordinates
\[
X=s^3x,\qquad Y=s^2y.
\]
The full singularity equations become
\begin{equation}
\begin{cases}
X^2-6XY+2X+3Y^3-6Y^2=0,\\
3X^2-XY^2+3XY+Y^2=0.
\end{cases}
\label{eq:final-rescaled-singular-system}
\end{equation}
Besides the origin and the point
\[
(X,Y)=(1,-1)
\]
lying on the cusp, the two points
\[
(X,Y)=(-2,\,1+i\sqrt3),
\qquad
(X,Y)=(-2,\,1-i\sqrt3)
\]
also satisfy \eqref{eq:final-rescaled-singular-system}. They are
distinct and do not lie on the cusp, since
\[
X^2+Y^3=-4
\]
at both points. Thus the foliation has at least four singular points,
contradicting the hypothesis. Therefore \(e=0\), and the preceding
case completes the proof.
\end{proof}

The preceding theorem has a useful Milnor-number formulation.

\begin{corollary}[Cusp persistence and no splitting at infinity]
\label{cor:cusp-no-splitting-final}
Let
\[
(\F_t)_{t\in(\CC,0)}
\]
be a holomorphic deformation of
\(\F_{\mathrm{cusp}}\) through degree-two foliations on
\(\PP^2\), all leaving \(C\) invariant. Assume that, for every
sufficiently small \(t\),

\begin{enumerate}
\item the cusp point $p$ is a singular point of $\F_t$ and
\[
\mu(\F_t,p)=2;
\]
\item there exists a singular point \(q_t\to q\) such that
\[
\mu(\F_t,q_t)=5.
\]
\end{enumerate}

Then
\[
\F_t=\F_{\mathrm{cusp}}
\]
for every sufficiently small \(t\).
\end{corollary}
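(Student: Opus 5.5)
We reduce the corollary to Theorem~\ref{thm:two-singularity-deformation-final} by showing that the Milnor-number hypotheses force $\Sing(\F_t)$ to consist of exactly two points. The idea is to count with the global formula \eqref{eq:global-milnor-formula}. For degree two it gives
\[
\sum_{r\in\Sing(\F_t)}\mu_r(\F_t)=2^2+2+1=7.
\]
By hypothesis, $p$ contributes $2$ and $q_t$ contributes $5$, which already exhausts the total of $7$.

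We first check that $q_t\neq p$ for small $t$. This holds because $q_t\to q$ and $q\neq p$. We also need every singular point to be isolated. This is automatic for a foliation on $\PP^2$ given by a saturated form. Alternatively, it follows from the normal form in Lemma~\ref{lem:degree-two-cusp-normal-form-final}, where $\lambda\neq0$ near $\F_{\mathrm{cusp}}$ and the defining form has no common factor.

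Every singular point has Milnor number at least $1$. Since $2+5=7$, no further singular point can exist, and therefore $\Sing(\F_t)=\{p,q_t\}$ has exactly two elements. Invariance of $C$ is assumed throughout, so both hypotheses of Theorem~\ref{thm:two-singularity-deformation-final} hold, and that theorem gives $\F_t=\F_{\mathrm{cusp}}$.

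As a sanity check, one could instead argue directly from the normal form. The condition $\mu(\F_t,p)=2$ forces $a=0$, since $a\neq0$ gives $\mu=1$ by the determinant computation in the proof of the theorem. The remaining analysis of $H(u)$ then proceeds as in that proof. This route is unnecessary, though, because the counting argument already yields the two-point hypothesis.

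The only delicate point is justifying the global formula when the deformation might acquire nonisolated singularities. Near $\F_{\mathrm{cusp}}$ this is excluded by saturation, since the saturation of a degree-two form is itself a degree-two form with isolated zeros. I expect no further obstacle.
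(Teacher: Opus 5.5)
Your proof is correct and is essentially the paper's argument: in degree two the global Milnor formula gives a total of $7$, the singularities at $p$ and $q_t$ (distinct, since $q_t\to q\neq p$) already contribute $2+5=7$, so $\Sing(\F_t)=\{p,q_t\}$ and Theorem~\ref{thm:two-singularity-deformation-final} applies. One quibble about your closing remark: isolatedness of the singularities is already part of $\F_t$ being a degree-two foliation given by a saturated form, so no saturation argument is needed, and in any case saturating a form with a common factor would lower its degree rather than give another degree-two form.
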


\begin{proof}
For every degree-two foliation on \(\PP^2\) with isolated
singularities,
\[
\sum_{r\in\Sing(\F_t)}
\mu(\F_t,r)
=
2^2+2+1
=
7.
\]
The two singularities in the hypotheses already contribute
\[
2+5=7.
\]
Since the Milnor number of every isolated singularity is positive,
there can be no further singular point. Thus
\[
\Sing(\F_t)=\{p,q_t\},
\]
and Theorem~\ref{thm:two-singularity-deformation-final} applies.
\end{proof}

We now give two examples which show that the hypotheses above are
essential.

\begin{example}[Preserving the cubic does not preserve two singularities]
\label{ex:alpha-deformation-final}
Consider
\[
\omega_t=\dd f+t\eta.
\]
This is the affine restriction of the degree-two projective family
\[
\Omega_t
=
\Omega_{\mathrm{cusp}}+t\Omega_{\eta},
\]
and \(C\) is invariant for every \(t\). Since \(e=0\), the point \(q\)
remains singular. The affine singularity equations are
\[
x-ty=0,
\qquad
y^2+tx=0,
\]
and for \(t\neq0\) they give
\[
p=(0,0),
\qquad
p_t=(-t^3,-t^2).
\]
On the line at infinity the only singular point is \(q\). Thus
\[
\Sing(\F_t)
=
\{p,p_t,q\}.
\]
Moreover \(p\) and \(p_t\) are nondegenerate, while the singularity at
\(q\) retains Milnor number \(5\). Hence the Milnor numbers distribute
as
\[
1+1+5=7.
\]
This shows that invariance of the cuspidal cubic alone does not imply
rigidity.
\end{example}

\begin{example}[Persistence of the affine cusp alone is not sufficient]
\label{ex:yalpha-deformation-final}
Consider
\[
\omega_t=\dd f+t\,y\eta
=
2(x-ty^2)\,dx+3(y^2+txy)\,dy.
\]
Again this is the affine restriction of a degree-two projective family
preserving \(C\). At the origin,
\[
\mu(\F_t,p)
=
\dim_{\CC}
\frac{\CC\{x,y\}}
     {(x-ty^2,\,y^2+txy)}
=2.
\]

In fact the reduction type of the cusp singularity is preserved.
Indeed, in the first blow-up chart
\[
x=uy
\]
the saturated pull-back is
\[
-2y(ty-u)\,du
+
(tuy+2u^2+3y)\,dy.
\]
At the remaining singular point of the total transform, put
\[
y=uv.
\]
After saturation we obtain
\[
-v(tuv-4u-3v)\,du
+
u(tuv+2u+3v)\,dv.
\]
The strict transform of the cusp, together with the two exceptional
components, still meets at the origin. A third blow-up,
\[
v=uz,
\]
gives the saturated form
\begin{equation}
6z(z+1)\,du
+
u(tuz+3z+2)\,dz.
\label{eq:final-yalpha-last-blowup}
\end{equation}
On the new exceptional divisor \(u=0\), the only singular points are
\[
z=0,\qquad z=-1,
\]
and both are nondegenerate for all sufficiently small \(t\).
Consequently the same sequence of three blow-ups reduces the cusp
singularity for all sufficiently small $t$: the strict transform of
$C$ and the exceptional divisor have the same normal-crossing
configuration, and the singularities appearing on the last exceptional
component are nondegenerate.  In this concrete sense the reduction type
at the cusp is preserved.

Nevertheless, for \(t\neq0\), there is an additional affine
singularity
\[
p_t=
\left(\frac1{t^3},-\frac1{t^2}\right)\in C.
\]
There is also an additional singularity on the line at infinity,
\[
r_t=[1:-t/3:0].
\]
The original point \(q=[1:0:0]\) remains singular. In the chart
\(X=1\), with \(u=Y/X\) and \(v=Z/X\), the family is represented near
\(q\) by
\[
3uv(u+t)\,du
-
(2v+tu^2+3u^3)\,dv.
\]
For \(t\neq0\), \(u+t\) is a unit near \(q\), and therefore
\[
\mu(\F_t,q)
=
\dim_{\CC}
\frac{\CC\{u,v\}}
     {(uv,\,2v+tu^2+3u^3)}
=
\dim_{\CC}\frac{\CC\{u\}}{(u^3)}
=3.
\]
A direct Jacobian computation shows that \(p_t\) and \(r_t\) are
nondegenerate. Thus
\[
2+3+1+1=7.
\]
Both \(p_t\) and \(r_t\) converge projectively to \(q\) as
\(t\to0\). Hence the affine cusp can retain the same reduction type while the
Milnor-number-five singularity at infinity splits.
\end{example}

\begin{remark}[The two escape mechanisms]
\label{rem:two-escape-mechanisms-final}
Examples~\ref{ex:alpha-deformation-final} and
\ref{ex:yalpha-deformation-final} display the two basic ways in which
a degree-two cusp-preserving deformation can escape rigidity. In the
first family the affine cusp singularity itself splits. In the second
family the affine cusp retains the same reduction type, but the singularity at
infinity splits. Thus a nontrivial deformation preserving \(C\) must
lose at least one of the two concentration properties of the standard
foliation.
\end{remark}

\section{Cuspidal linear pull-back loci in higher dimension}
\label{sec:pullback-locus}

We now apply the planar deformation theorem to linear pull-backs of the
standard cuspidal foliation. The resulting family is not a new
irreducible component of the ambient moduli space; rather, it is a
distinguished cuspidal sublocus inside the classical linear pull-back
component.

Let \(n\ge3\) and let
\[
\pi:\PP^n\dashrightarrow\PP^2,
\qquad
\pi=[L_0:L_1:L_2],
\]
be a dominant linear rational map, where
\(L_0,L_1,L_2\) are linearly independent linear forms. Its center is
\[
K_\pi=(L_0=L_1=L_2=0)\simeq\PP^{n-3}.
\]
We put
\[
\mathcal G_\pi=\pi^*\F_{\mathrm{cusp}}.
\]
The invariant cubic pulls back to the cuspidal cone
\[
\widehat C_\pi
=
(L_0^2L_2+L_1^3=0).
\]
Define also
\[
P_{p,\pi}=(L_0=L_1=0),
\qquad
P_{q,\pi}=(L_1=L_2=0).
\]
Then
\[
P_{p,\pi}\simeq P_{q,\pi}\simeq\PP^{n-2},
\qquad
P_{p,\pi}\cap P_{q,\pi}=K_\pi.
\]

\begin{lemma}[Singular set of the cuspidal pull-back]
\label{lem:singular-set-pullback-final}
For every dominant linear rational map \(\pi\),
\[
\Sing(\mathcal G_\pi)
=
P_{p,\pi}\cup P_{q,\pi}.
\]
At a general point of \(P_{p,\pi}\), respectively \(P_{q,\pi}\), a
two-dimensional transverse section has the same singularity as the
planar model at \(p\), respectively at \(q\).
\end{lemma}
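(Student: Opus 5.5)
The plan is to choose linear coordinates on $\PP^n$ adapted to $\pi$ and reduce everything to the planar model. Since $L_0,L_1,L_2$ are linearly independent, complete them to a coordinate system $(X_0,\dots,X_n)$ with $X_0=L_0$, $X_1=L_1$, $X_2=L_2$. Then $\pi$ is the linear projection $[X_0:\dots:X_n]\mapsto[X_0:X_1:X_2]$. The pulled-back homogeneous form is
\[
 \pi^*\Omega_{\mathrm{cusp}}
 =
 X_2\,\dd F(X_0,X_1,X_2)-3F(X_0,X_1,X_2)\,\dd X_2,
\]
which involves only $X_0,X_1,X_2$ and their differentials. It has no codimension-one zero set, because $\Omega_{\mathrm{cusp}}$ has isolated zeros. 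So this form, viewed on $\PP^n$, is already saturated and defines $\mathcal G_\pi$.

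Next I compute the singular set. A point $x\in\PP^n$ is singular exactly when all coefficients of the form vanish there. These coefficients are the three coefficients of $\Omega_{\mathrm{cusp}}$ evaluated at $(X_0,X_1,X_2)$, and the remaining ones are zero. Hence $x$ is singular if and only if either $x\in K_\pi$ (all three coordinates vanish), or $\pi(x)$ is defined and lies in $\Sing(\F_{\mathrm{cusp}})$. For the second alternative I use the fact that the homogeneous form of a degree-two foliation vanishes on the cone over its singular set. I then identify $\Sing(\F_{\mathrm{cusp}})=\{p,q\}$. Its affine part is $(x,y^2)=0$, which gives only $p$. At infinity, the chart computation in Theorem~\ref{thm:two-singularity-deformation-final} gives only $q$: along $v=0$ the generators $vu^2$ and $2v+3u^3$ force $u=0$. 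The cone over $p=[0:0:1]$ is $\{X_0=X_1=0\}=P_{p,\pi}$, and the cone over $q=[1:0:0]$ is $\{X_1=X_2=0\}=P_{q,\pi}$. Both contain $K_\pi$. This gives $\Sing(\mathcal G_\pi)=P_{p,\pi}\cup P_{q,\pi}$.

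For the transverse type, I take a general point $x\in P_{p,\pi}\setminus K_\pi$ and choose an affine chart in which $X_2=1$. In the affine coordinates $(x_0,x_1,x_3,\dots,x_n)$ the foliation is defined by $\dd(x_0^2+x_1^3)$. This is a local product of the planar germ at $p$ with a smooth factor along $P_{p,\pi}$. A $2$-plane section $\{x_3,\dots,x_n \text{ constant}\}$ is transverse to $P_{p,\pi}$, and the restricted form is exactly the planar form at $p$. The same argument works at $q$. There I use the chart $X_0=1$, where the form becomes $3vu^2\,\dd u-(2v+3u^3)\,\dd v$ in the variables $u=X_1/X_0$, $v=X_2/X_0$, times a trivial factor. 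Since local products are independent of the chosen transversal up to analytic equivalence, the germ of any transverse section at a general point matches the planar model.

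The main obstacle is the saturation issue: checking that the pulled-back homogeneous form is a genuine defining form of degree two with singular set of codimension at least two. One must also check that points of $K_\pi$ really belong to $\Sing$, which holds because every coefficient vanishes there. I expect both to follow directly from the coordinate expression, since the codimension of $P_{p,\pi}\cup P_{q,\pi}$ is $2$.
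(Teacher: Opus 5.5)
Your proposal is correct and is essentially the paper's argument. Off the center $K_\pi$ the pulled-back form vanishes exactly over $\Sing(\F_{\mathrm{cusp}})=\{p,q\}$, all its coefficients vanish on $K_\pi$, and the product structure in your adapted linear coordinates (the paper's ``$\pi$ is a submersion off $K_\pi$'') gives the transverse type. One small slip: the chart $X=1$ misses the point $[0:1:0]$ of the line at infinity, which is regular because the $\dd Z$-coefficient $-(2X^2Z+3Y^3)$ of $\Omega_{\mathrm{cusp}}$ equals $-3$ there (alternatively, $\mu_p+\mu_q=2+5=7$ already exhausts the global Milnor count).
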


\begin{proof}
Outside the center \(K_\pi\), the map \(\pi\) is a submersion.
Therefore
\[
\pi^*\Omega_{\mathrm{cusp}}(x)=0
\]
if and only if
\[
\Omega_{\mathrm{cusp}}(\pi(x))=0.
\]
Since
\[
\Sing(\F_{\mathrm{cusp}})=\{p,q\},
\]
the singular set outside \(K_\pi\) is the union of the two fibers over
\(p\) and \(q\). Their closures are precisely
\(P_{p,\pi}\) and \(P_{q,\pi}\). The coefficients of
\(\pi^*\Omega_{\mathrm{cusp}}\) also vanish on the center, and
\[
K_\pi=P_{p,\pi}\cap P_{q,\pi}.
\]
This proves the formula for the singular set.

At a general point of either component, choose local coordinates
\[
(z_1,z_2,w_1,\ldots,w_{n-2})
\]
such that \(\pi\) is locally the projection onto
\((z_1,z_2)\). Then \(\mathcal G_\pi\) is defined by the pull-back of
the corresponding planar germ, so every transverse two-plane
\((w_1=\cdots=w_{n-2}=0)\) carries exactly that planar singularity.
\end{proof}

\begin{definition}[Cuspidal linear pull-back locus]
\label{def:cuspidal-linear-pullback-locus-final}
For \(n\ge3\), set
\[
\mathcal C_n^{\mathrm{cusp}}
=
\overline{
\left\{
\pi^*\F_{\mathrm{cusp}};
\ \pi:\PP^n\dashrightarrow\PP^2
\text{ dominant and linear}
\right\}}
\subset
\operatorname{Fol}_2(\PP^n),
\]
where the bar denotes Zariski closure. We call
\(\mathcal C_n^{\mathrm{cusp}}\) the \emph{cuspidal linear pull-back locus}.
\end{definition}

\begin{proposition}
\label{prop:cuspidal-locus-irreducible-final}
The locus \(\mathcal C_n^{\mathrm{cusp}}\) is a proper closed irreducible
algebraic subset of \(\operatorname{Fol}_2(\PP^n)\), contained in the
classical linear pull-back irreducible component.
\end{proposition}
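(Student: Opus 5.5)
We will realize $\mathcal C_n^{\mathrm{cusp}}$ as the closure of the image of a morphism from an irreducible variety. Let $\mathcal U\subset \operatorname{Hom}(\CC^{n+1},\CC^3)$ be the Zariski open set of surjective linear maps, i.e. triples $(L_0,L_1,L_2)$ of linearly independent linear forms. It is a nonempty open subset of an affine space, hence irreducible. Consider the map
\[
 \Phi:\mathcal U\longrightarrow \PP\bigl(H^0(\PP^n,\Omega^1_{\PP^n}(4))\bigr),
 \qquad
 (L_0,L_1,L_2)\longmapsto
 \bigl[L_2\,\dd F(L_0,L_1,L_2)-3F(L_0,L_1,L_2)\,\dd L_2\bigr],
\]
with $F=X^2Z+Y^3$. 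This is the homogeneous form $\pi^*\Omega_{\mathrm{cusp}}$. Its coefficients are polynomial in the coefficients of the $L_i$.

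First we check that $\Phi$ is well defined and lands in $\operatorname{Fol}_2(\PP^n)$. The form is nonzero, since $\pi$ is a submersion off $K_\pi$ and $\Omega_{\mathrm{cusp}}\neq0$. It is integrable, because it is the pull-back of a form on $\PP^2$, where integrability is automatic. Its codimension-one part of the zero set is empty by Lemma~\ref{lem:singular-set-pullback-final}, whose singular set has codimension two. Contraction with the radial field vanishes because it does so on $\PP^2$. So $\Phi$ is a morphism into the quasi-projective variety $\operatorname{Fol}_2(\PP^n)$. Its image is the constructible set of cuspidal pull-backs, and the image of an irreducible variety is irreducible. Therefore the Zariski closure $\mathcal C_n^{\mathrm{cusp}}$, taken inside $\operatorname{Fol}_2(\PP^n)$, is closed and irreducible.

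Next, containment in the linear pull-back component. That component is classically the closure of $\{\pi^*\F\}$ with $\pi$ ranging over dominant linear maps and $\F$ over $\operatorname{Fol}_2(\PP^2)$. Our image is the subset with $\F=\F_{\mathrm{cusp}}$, so it lies in that closed set, and so does its closure.

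The remaining step, which I expect to be the main point, is properness. I would use a dimension count. $\Phi$ factors through $\mathcal U/\operatorname{Stab}$, where $\operatorname{Stab}\subset GL(3)$ is the group of $g$ with $g^*\F_{\mathrm{cusp}}=\F_{\mathrm{cusp}}$. Since $\operatorname{Fol}_2(\PP^2)$ has dimension $14$ and the $PGL_3$-orbit of $\F_{\mathrm{cusp}}$ has positive codimension there (it has a rational first integral, and Lemma~\ref{lem:degree-two-cusp-normal-form-final} exhibits $C$-preserving deformations such as Example~\ref{ex:alpha-deformation-final}), we get
\[
 \dim\mathcal C_n^{\mathrm{cusp}}\le \dim\operatorname{Gr}+\dim(PGL_3\cdot\F_{\mathrm{cusp}})
 <\dim\operatorname{Gr}+14,
\]
and the right-hand side is the dimension of the pull-back component. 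Here $\operatorname{Gr}$ is the Grassmannian of projections, and a generic fiber of $(\pi,\F)\mapsto\pi^*\F$ is finite modulo $PGL_3$.

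A cleaner alternative is to exhibit an explicit element not in $\mathcal C_n^{\mathrm{cusp}}$, namely $\pi^*\F_t$ with $\F_t$ from Example~\ref{ex:alpha-deformation-final}. Along a transverse plane its singular set has three components, hence so does $\Sing$ with the same multiplicity data. On the other hand, every element of the closure has transverse Milnor numbers $2$ and $5$ along some codimension-two component, by upper semicontinuity. The dimension argument avoids this semicontinuity subtlety, so I would present it first, justifying $\dim(PGL_3\cdot\F_{\mathrm{cusp}})\le 8<14$.
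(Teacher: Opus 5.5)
Your proposal is correct and follows essentially the paper's route. Irreducibility comes from the image of the irreducible open set of independent triples, and containment from specializing $(\pi,\F)\mapsto\pi^*\F$ to $\F=\F_{\mathrm{cusp}}$. Properness comes from the count $\dim\mathcal C_n^{\mathrm{cusp}}\le 3(n-2)+8=3n+2<3n+8=\dim LPB(2,n)$; the value $3n+8$, which the paper takes from Ferrer--Vainsencher, is exactly your generic-finiteness claim. That claim does hold, since a saturated degree-two linear pull-back determines its center: otherwise its defining form would descend through a linear projection to $\PP^1$ and vanish along a hypersurface. Present only the dimension argument. The semicontinuity alternative is not justified as sketched, because limits in the Zariski closure can acquire more degenerate singular sets, so having transverse Milnor numbers $2$ and $5$ is not a closed condition that every element of $\mathcal C_n^{\mathrm{cusp}}$ must satisfy.
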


\begin{proof}
The set of triples
\[
(L_0,L_1,L_2)
\]
of linearly independent linear forms is a nonempty Zariski open subset
of the irreducible vector space
\[
H^0(\PP^n,\mathcal O_{\PP^n}(1))^{\oplus3}.
\]
The assignment
\[
(L_0,L_1,L_2)
\longmapsto
[\pi^*\Omega_{\mathrm{cusp}}]
\]
is algebraic. Hence the Zariski closure of its image is irreducible.

To identify its position in the space of foliations, consider the
larger irreducible parameter space of pairs
\[
(\pi,\F),
\qquad
\F\in\operatorname{Fol}_2(\PP^2),
\]
with \(\pi\) dominant and linear. The closure of the image of the
pull-back map
\[
(\pi,\F)\longmapsto \pi^*\F
\]
is the classical linear pull-back component. The subfamily obtained
by fixing \(\F=\F_{\mathrm{cusp}}\) is precisely the
family defining \(\mathcal C_n^{\mathrm{cusp}}\). Hence
\(\mathcal C_n^{\mathrm{cusp}}\) is contained in that component.

The inclusion is proper.  Indeed, the family defining
\(\mathcal C_n^{\mathrm{cusp}}\) is parametrized by the nonempty open
set of linearly independent triples in
\[
 \PP\!\left(H^0(\PP^n,\mathcal O_{\PP^n}(1))^{\oplus3}\right),
\]
which has dimension \(3(n+1)-1=3n+2\).  Therefore
\[
 \dim \mathcal C_n^{\mathrm{cusp}}\leq 3n+2.
\]
On the other hand, Ferrer and Vainsencher prove that the linear
pull-back component satisfies
\[
 \dim LPB(d,n)
 =
 \dim \mathbb G(3,n+1)+(d+1)(d+3)-1;
\]
see \cite[Proposition~2.1]{FerrerVainsencher2021}.  For \(d=2\), since
\(\dim\mathbb G(3,n+1)=3(n-2)\), this gives
\[
 \dim LPB(2,n)=3n+8.
\]
Thus \(\mathcal C_n^{\mathrm{cusp}}\) is strictly contained in the
linear pull-back component.
\end{proof}

Thus \(\mathcal C_n^{\mathrm{cusp}}\) is naturally viewed as a
distinguished cuspidal sublocus of the linear pull-back component,
rather than as a new irreducible component of the full space of
degree-two foliations.  Here there is no conflict between
irreducibility and this conclusion: an irreducible component is a
\emph{maximal} irreducible closed subset of the ambient algebraic
space, whereas \(\mathcal C_n^{\mathrm{cusp}}\) is contained in the
larger irreducible linear pull-back component.

\begin{remark}
For \(n\ge3\), Cerveau and Lins Neto proved that the space of
degree-two codimension-one foliations on \(\PP^n\) has six
irreducible components; see
\cite{CerveauLinsNeto1996}. Linear pull-backs form one of the
classical component families; see also
\cite{CerveauLinsNetoEdixhoven2001,FerrerVainsencher2021}.
The proper closed irreducible algebraic subset
\(\mathcal C_n^{\mathrm{cusp}}\) is therefore naturally viewed as a
geometrically distinguished locus inside one of these known
irreducible components.  This motivates the terminology
\emph{cuspidal linear pull-back locus} and places the construction in
the broader setting of studying natural irreducible algebraic loci
inside irreducible components of spaces of foliations.
\end{remark}

The planar theorem immediately gives rigidity in the base direction
for deformations which are already known to remain linear pull-backs.

\stepcounter{theorem}
\begin{maintheorem}[Rigidity inside the linear pull-back locus]
\label{thm:rigidity-inside-pullback-final}
Let \(n\ge3\), and let
\[
(\mathcal G_t)_{t\in(\CC,0)}
\]
be a holomorphic deformation of
\[
\mathcal G_0=\pi_0^*\F_{\mathrm{cusp}}
\]
of the form
\[
\mathcal G_t=\pi_t^*\F_t,
\]
where \(\pi_t:\PP^n\dashrightarrow\PP^2\) is a
holomorphic family of dominant linear rational maps and
\(\F_t\) is a holomorphic family of degree-two foliations on
\(\PP^2\), with
\[
\F_0=\F_{\mathrm{cusp}}.
\]

Assume that there is a holomorphic family of irreducible cuspidal
cubics \(C_t\subset\PP^2\), with \(C_0=C\), such that, for every
sufficiently small \(t\),

\begin{enumerate}
\item \(C_t\) is invariant by \(\F_t\);
\item \(\Sing(\F_t)\) consists of exactly two
points.
\end{enumerate}

Then, after shrinking the parameter disc, there exists a holomorphic
family
\[
A_t\in\operatorname{PGL}(3,\CC),
\qquad
A_0=\operatorname{id},
\]
such that
\[
\F_t=A_t^*\F_{\mathrm{cusp}}.
\]
Consequently
\[
\mathcal G_t
=
(A_t\circ\pi_t)^*\F_{\mathrm{cusp}},
\]
and therefore
\[
\mathcal G_t\in\mathcal C_n^{\mathrm{cusp}}
\]
for every sufficiently small \(t\).
\end{maintheorem}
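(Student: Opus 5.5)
The plan is to reduce to Theorem~\ref{thm:two-singularity-deformation-final} by moving the cubics $C_t$ back to the fixed cubic $C$. First, I will construct a holomorphic family $A_t\in\operatorname{PGL}(3,\CC)$ with $A_0=\operatorname{id}$ and $A_t(C_t)=C$. Let $F_t$ be a holomorphically varying cubic equation of $C_t$, normalized so that $F_0=X^2Z+Y^3$. The orbit map $\operatorname{PGL}(3,\CC)\to\PP(\mathrm{Sym}^3)$, $A\mapsto[F\circ A]$, has image the orbit of irreducible cuspidal cubics. By the classical projective classification recalled in the introduction, every irreducible cuspidal cubic lies in this single orbit. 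The orbit is a smooth locally closed submanifold, and the orbit map is a submersion onto it, being a homogeneous fibration with fiber the stabilizer. Hence the holomorphic curve $t\mapsto[F_t]$, which stays in the orbit, admits a holomorphic lift $A_t^{-1}$ through $\operatorname{id}$ after shrinking the disc, using a local holomorphic section of the orbit map at $[F_0]$.

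Concretely, I could alternatively build $A_t$ by hand. The cusp $p_t$ and the smooth flex $q_t$ of $C_t$ vary holomorphically: $p_t$ is the singular point of a reduced cubic, the unique solution of $\nabla F_t=0$, so the implicit function theorem applies after a generic slice; $q_t$ is the unique smooth point of $C_t$ on its Hessian. Also, a projective map sending $C_t$ to $C_0$ can be normalized to send $p_t\mapsto p_0$ and $q_t\mapsto q_0$. I consider the orbit argument cleaner and will use it, noting that the orbit is locally closed because orbits of algebraic group actions are.

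Next, set $\widetilde\F_t=(A_t^{-1})^*\F_t$, that is, push $\F_t$ forward by $A_t$. This is a holomorphic family of degree-two foliations with $\widetilde\F_0=\F_{\mathrm{cusp}}$. By construction $C=A_t(C_t)$ is invariant by $\widetilde\F_t$, and $\Sing(\widetilde\F_t)=A_t(\Sing(\F_t))$ consists of exactly two points. Theorem~\ref{thm:two-singularity-deformation-final} therefore yields $\widetilde\F_t=\F_{\mathrm{cusp}}$, that is, $\F_t=A_t^*\F_{\mathrm{cusp}}$, for all small $t$. Then $\mathcal G_t=\pi_t^*A_t^*\F_{\mathrm{cusp}}=(A_t\circ\pi_t)^*\F_{\mathrm{cusp}}$. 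Since $A_t\circ\pi_t$ is again a dominant linear rational map, $\mathcal G_t$ lies in the defining family of $\mathcal C_n^{\mathrm{cusp}}$ from Definition~\ref{def:cuspidal-linear-pullback-locus-final}.

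The main obstacle is the first step, namely the holomorphic normalization $A_t$ with $A_0=\operatorname{id}$. The care needed is in justifying that the orbit of $C$ is an embedded submanifold near $[F_0]$ and that $t\mapsto[F_t]$ is holomorphic into it, rather than merely a holomorphic curve in the ambient space that happens to meet the orbit pointwise. I would argue as follows. The orbit is Zariski open in its closure, and that closure is the hypersurface of cuspidal cubics. A holomorphic curve contained in a locally closed analytic set through a smooth point is holomorphic into it. Smoothness at every point holds because orbits are homogeneous, and the local holomorphic section exists by the submersion property. All remaining steps are formal.
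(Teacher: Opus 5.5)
Your proposal is correct and follows the paper's proof essentially verbatim: lift $C_t$ to a family $A_t$ with $A_0=\operatorname{id}$ via a local holomorphic section of the $\operatorname{PGL}(3,\CC)$-orbit map, push $\F_t$ forward to a family $\widetilde\F_t$ that preserves the fixed cubic $C$ and has two singular points, apply Theorem~\ref{thm:two-singularity-deformation-final}, and compose with $\pi_t$. One harmless slip: the closure of the cuspidal orbit is not a hypersurface in the $\PP^9$ of plane cubics but has codimension two, since the stabilizer contains the one-parameter group $(x,y)\mapsto(\lambda^3x,\lambda^2y)$. Your argument only uses that the orbit is smooth and locally closed and that the orbit map is a submersion onto it, so this does not affect the proof.
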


\begin{proof}
Every irreducible cuspidal cubic in \(\PP^2\) is projectively
equivalent to \(C\); see \cite[Chapter~3]{Dolgachev2012}. The family \(C_t\), being a small holomorphic
family in the \(\operatorname{PGL}(3,\CC)\)-orbit of \(C\),
admits, after shrinking the parameter disc, a holomorphic lift
\[
A_t\in\operatorname{PGL}(3,\CC),
\qquad
A_0=\operatorname{id},
\]
such that
\[
A_t(C_t)=C.
\]
Indeed, one may first lift the family \(C_t\) by local holomorphic
sections of the orbit map and then take inverses. Equivalently, this
uses the local holomorphic triviality of the homogeneous
\(\operatorname{PGL}(3,\CC)\)-orbit of \(C\). The relevant orbit map is
\[
\operatorname{PGL}(3,\CC)
\longrightarrow
\operatorname{PGL}(3,\CC)\cdot C.
\]

Set
\[
\widetilde{\F}_t
=
(A_t^{-1})^*\F_t.
\]
Then \(\widetilde{\F}_t\) is a degree-two deformation of
\(\F_{\mathrm{cusp}}\), it leaves the fixed cubic \(C\)
invariant, and it has exactly two singular points. Therefore
Theorem~\ref{thm:two-singularity-deformation-final} gives
\[
\widetilde{\F}_t
=
\F_{\mathrm{cusp}}.
\]
Thus
\[
\F_t=A_t^*\F_{\mathrm{cusp}},
\]
and hence
\[
\mathcal G_t
=
\pi_t^*\F_t
=
(A_t\circ\pi_t)^*\F_{\mathrm{cusp}}.
\]
\end{proof}

\begin{corollary}[Milnor-number version on the base]
\label{cor:pullback-milnor-version-final}
Keep the pull-back setup and the holomorphic invariant family of
cuspidal cubics \(C_t\) from
Theorem~\ref{thm:rigidity-inside-pullback-final}. Replace the
two-singularity assumption by the following conditions:

\begin{enumerate}
\item the cusp point $p_t\in C_t$ is a singular point of $\F_t$ with
\[
\mu(\F_t,p_t)=2;
\]
\item there exists a singular point \(q_t\neq p_t\) with
\[
\mu(\F_t,q_t)=5.
\]
\end{enumerate}

Then
\[
\mathcal G_t
=
(A_t\circ\pi_t)^*\F_{\mathrm{cusp}}
\]
for a holomorphic family
\(A_t\in\operatorname{PGL}(3,\CC)\).
\end{corollary}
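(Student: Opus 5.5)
The plan is to reduce to Theorem~\ref{thm:rigidity-inside-pullback-final}, in the same way that Corollary~\ref{cor:cusp-no-splitting-final} reduces to Theorem~\ref{thm:two-singularity-deformation-final}. It suffices to show that, for every sufficiently small \(t\),
\[
\Sing(\F_t)=\{p_t,q_t\}.
\]
Once this holds, all hypotheses of Theorem~\ref{thm:rigidity-inside-pullback-final} are satisfied. That theorem then produces the holomorphic family \(A_t\in\operatorname{PGL}(3,\CC)\) with \(A_0=\operatorname{id}\) and \(\F_t=A_t^*\F_{\mathrm{cusp}}\), and hence
\[
\mathcal G_t=\pi_t^*\F_t=(A_t\circ\pi_t)^*\F_{\mathrm{cusp}}.
\]

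The first step is to check that \(\F_t\) has isolated singularities, so that the global Milnor formula \eqref{eq:global-milnor-formula} applies. This follows from semicontinuity. The central foliation \(\F_{\mathrm{cusp}}\) has the finite singular set \(\{p,q\}\). The locus of degree-two foliations with a curve of singularities is Zariski closed in \(\operatorname{Fol}_2(\PP^2)\) and does not contain \(\F_0\), so it misses \(\F_t\) for small \(t\). Alternatively, by hypothesis (2) the singular points \(p_t,q_t\) are isolated, and a one-dimensional component of \(\Sing(\F_t)\) would have to persist as a curve in the limit \(t\to0\), which is impossible.

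The second step applies the Milnor formula to \(\F_t\):
\[
\sum_{r\in\Sing(\F_t)}\mu(\F_t,r)=2^2+2+1=7.
\]
The two prescribed singularities already contribute \(\mu(\F_t,p_t)+\mu(\F_t,q_t)=2+5=7\). Every isolated singularity has positive Milnor number, so there is no further singular point. Since \(q_t\neq p_t\), \(\Sing(\F_t)\) consists of exactly the two points \(p_t\) and \(q_t\).

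The only real subtlety is the first step, isolatedness, and I expect the semicontinuity argument to handle it routinely. Note also that \(q_t\) need not be assumed close to the flex of \(C_t\): Theorem~\ref{thm:rigidity-inside-pullback-final} requires only the cardinality of the singular set, and its proof, through Theorem~\ref{thm:two-singularity-deformation-final}, identifies the configuration a posteriori.
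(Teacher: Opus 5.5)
Your proposal is correct and follows the paper's proof: the global Milnor formula gives a total of $7=2+5$, so $\Sing(\F_t)=\{p_t,q_t\}$, and then Theorem~\ref{thm:rigidity-inside-pullback-final} applies. Your extra isolatedness step is harmless but not really needed, since a degree-two foliation on $\PP^2$ is represented by a saturated form and therefore has a finite singular set by definition.
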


\begin{proof}
The global Milnor-number formula on \(\PP^2\) gives
\[
\sum_r\mu(\F_t,r)=7.
\]
The two prescribed singularities already contribute \(2+5=7\), so
there are no others. Theorem~\ref{thm:rigidity-inside-pullback-final} applies.
\end{proof}

\begin{remark}[Higher-dimensional counterexamples]
\label{rem:higher-dimensional-counterexamples-final}
The two planar counterexamples immediately pull back to every
dimension \(n\ge3\).

If
\[
\mathcal G_t
=
\pi^*(\dd f+t\eta),
\]
then the cuspidal cone \(\widehat C_\pi\) remains invariant, but the
singular set has three codimension-two components, namely the closures
of the fibers over
\[
p,\qquad p_t,\qquad q.
\]
Thus preservation of the cuspidal cone alone does not imply rigidity.

If instead
\[
\mathcal G_t
=
\pi^*(\dd f+t\,y\eta),
\]
then the transverse reduction type along the component over \(p\) remains
unchanged, but the singularity over \(q\) splits. For \(t\neq0\), the
singular set has the four codimension-two components lying over
\[
p,\qquad p_t,\qquad q,\qquad r_t.
\]
Therefore even preservation of the invariant cuspidal cone together
with persistence of the same transverse reduction type along
\(P_{p,\pi}\) is not sufficient.
\end{remark}

\begin{remark}[Why the projection is allowed to move]
Even after the planar foliation has been rigidified, one cannot expect
\[
\mathcal G_t=\mathcal G_0
\]
as foliations on the fixed space \(\PP^n\), since the linear
projection itself may vary. The correct rigidity conclusion is
\[
\mathcal G_t=\pi_t'^*\F_{\mathrm{cusp}}
\]
for a suitable family of linear projections \(\pi_t'\). Thus the
cuspidal locus is rigid in the base-foliation direction, while the
natural parameters of the projection remain.
\end{remark}

\subsection{An intrinsic pull-back recognition problem}

Theorem~\ref{thm:rigidity-inside-pullback-final} assumes from the
outset that the deformation remains in the linear pull-back locus. It
is natural to ask whether this assumption follows from the intrinsic
singular geometry of the cuspidal model.

\begin{problem}[Intrinsic pull-back recognition]
\label{prob:intrinsic-cusp-pullback-final}
Let \(n\ge3\), and let
\[
(\mathcal G_t)_{t\in(\CC,0)}
\]
be a sufficiently small holomorphic deformation in
\(\operatorname{Fol}_2(\PP^n)\) of
\[
\mathcal G_0=\pi_0^*\F_{\mathrm{cusp}}.
\]
Assume that, for every sufficiently small \(t\),

\begin{enumerate}
\item \(\mathcal G_t\) leaves invariant a hypersurface
\(\widehat C_t\) projectively equivalent to a linear cuspidal cone
\[
(L_0^2L_2+L_1^3=0);
\]
\item
\[
\Sing(\mathcal G_t)
=
P_{p,t}\cup P_{q,t},
\]
where \(P_{p,t}\) and \(P_{q,t}\) are codimension-two linear
subspaces meeting along a codimension-three linear subspace
\[
K_t=P_{p,t}\cap P_{q,t};
\]
\item at a general point of each component, a transverse
      two-dimensional germ is analytically equivalent to the corresponding
      planar singularity: the germ at \(p\) along \(P_{p,t}\), and the germ
      at \(q\) along \(P_{q,t}\).
\end{enumerate}

Must there exist a holomorphic family of dominant linear rational maps
\[
\pi_t:\PP^n\dashrightarrow\PP^2
\]
such that
\[
\mathcal G_t=\pi_t^*\F_{\mathrm{cusp}}?
\]
\end{problem}

A positive answer would characterize the cuspidal locus
\(\mathcal C_n^{\mathrm{cusp}}\) intrinsically, using only the invariant cuspidal cone
and the two distinguished transverse singularity types. The two counterexamples above motivate the need to control both singular
components; in particular, the second shows that preservation of the reduction
type along the cusp component alone is insufficient.

A possible approach is to exploit the persistence and unfolding
geometry of these two codimension-two singular components.  Their
intersection
\[
 K_t=P_{p,t}\cap P_{q,t}
\]
already determines a candidate linear projection, up to a projective
change of coordinates on the target.  After blowing up $K_t$, the
recognition problem becomes one of proving that the transformed
foliation contains the relative tangent directions of the resulting
morphism to $\PP^2$.  Methods relating persistent singular schemes,
Kupka schemes and first-order unfoldings may be useful in this step;
see \cite{MassriMolinuevoQuallbrunn2018,Perrella2024}.  We stress,
however, that the two distinguished components are not ordinary Kupka
components for the standard cusp model, since $d\Omega_{\mathrm{cusp}}$
vanishes over both $p$ and $q$.  Thus any such argument must use a
persistent-singularity or unfolding version of the Kupka phenomenon,
not the classical Kupka persistence theorem directly.

\begin{remark}[Relation with general pull-back stability]
\label{rem:general-stability-does-not-directly-apply-final}
Known stability results show that broad classes of pull-back
foliations remain pull-backs under deformation. However, the standard
hypotheses of the available general results do not immediately cover
the present special cusp model. For example, a recent pull-back
stability theorem assumes, among other conditions, that the base
foliation have split tangent sheaf with non-positive splitting and
that the zero set of the exterior derivative of a base defining form
have codimension strictly greater than two; it also requires
\(n\ge m+2\), hence \(n\ge4\) for a two-dimensional base; see
\cite{GargiuloMolinuevoQuallbrunnVelazquez2026}. Here
\[
d\Omega_{\mathrm{cusp}}
=
4\,dZ\wedge dF,
\]
and therefore
\[
\Sing(d\Omega_{\mathrm{cusp}})
=
\{p,q\},
\]
which has codimension two in \(\PP^2\). Thus the intrinsic
recognition problem above is not an immediate consequence of that
general theorem and requires a separate argument adapted to the cusp
geometry.
\end{remark}

Proposition~\ref{prop:cuspidal-locus-irreducible-final} suggests a
broader question concerning the internal algebraic geometry of the
irreducible components of spaces of projective foliations.  Once an
irreducible component is known, one may ask for the geometrically or
dynamically distinguished irreducible algebraic subloci that it
contains.

\begin{problem}[Natural algebraic subloci of foliation components]
Let $\mathcal X$ be an irreducible component of
$\operatorname{Fol}_d(\PP^n)$.  Describe and study the geometrically or
dynamically distinguished irreducible locally closed algebraic loci
$Z\subset\mathcal X$.  In particular, determine their dimensions and
incidence relations, the geometry of their Zariski closures and boundary
strata, and the extent to which their defining geometric properties are
stable under deformation.
\end{problem}

\end{document}